\theoremstyle{thmstyleone}%
\newtheorem{theorem}{Theorem}
\newtheorem{lemma}[theorem]{Lemma}
\theoremstyle{thmstyletwo}%
\newtheorem{example}{Example}%
\newtheorem{remark}{Remark}%
\theoremstyle{thmstylethree}%
\newtheorem{definition}{Definition}%
\newtheorem{assumption}{Assumption}
\newcommand{\T}{\mathcal{T}}
\newcommand{\N}{\mathcal{N}}
\newcommand{\eqd}{:=}
\newcommand{\X}{\mathcal{X}}
\newcommand{\xx}{\bm{x}}
\newcommand{\Conv}{\texttt{Conv}}
\newcommand{\uV}{\underline{V}}
\newcommand{\oV}{\overline{V}}
\newcommand{\oP}{\overline{P}}
\newcommand{\uP}{\underline{P}}
\DeclareMathOperator{\blkdiag}{\texttt{blkdiag}}
\DeclareMathOperator{\FB}{\mathsf{T}}
\DeclareMathOperator{\imaginary}{\mathsf{i}}
\newcommand{\pRE}{p^\mathrm{p}} 
\newcommand{\tauC}{\tau^\mathrm{c}}
\newcommand{\rlbRE}{\underline{\rho^\mathrm{p}}} 
\newcommand{\rubRE}{\overline{\rho^\mathrm{p}}} 
\newcommand{\upRE}{\overline{P}^\mathrm{p}} 
\newcommand{\dom}{\mathrm{dom}}
\newcommand{\A}{\mathcal{A}}
\newcommand{\acdefp}[1]{\emph{\aclp{#1}} \textup(\acsp{#1}\textup)\acused{#1}}	
\newcommand{\pushright}[1]{\ifmeasuring@#1\else\omit\hfill$\displaystyle#1$\fi\ignorespaces}
\newcommand{\pushleft}[1]{\ifmeasuring@#1\else\omit$\displaystyle#1$\hfill\fi\ignorespaces}
\newcommand{\inner}[2]{\langle{#1},{#2}\rangle}
\DeclarePairedDelimiter{\cardinality}{|}{|}
\newcommand{\normx}[2]{\norm{#1}_{#2}}
\newcommand{\filtrationfunction}{\mathcal{F}}
\newcommand{\filtrationk}[1]{\filtrationfunction_{#1}}
\DeclareMathOperator{\produced}{p}
\DeclareMathOperator{\consumed}{c}
\newcommand{\iter}{k} 
\newcommand{\Stepsize}{T}
\newcommand{\Stepsizek}[1]{\Stepsize}
\newcommand{\Convex}{\Upsilon}
\newcommand{\Scale}{\Delta}
\newcommand{\Scalek}[1]{\Scale}
\newcommand{\Aggregator}{\mathcal{A}}
\newcommand{\aggregator}{a}
\newcommand{\nbaggregators}{p}
\newcommand{\tim}{t}
\newcommand{\Tim}{\mathcal{T}}
\newcommand{\timlast}{T}
\newcommand{\Node}{\mathcal{N}}
\newcommand{\Nodea}[1]{\Node_{#1}}
\newcommand{\node}{n}
\newcommand{\nodenb}{N}
\newcommand{\nodem}{\node_{-}}
\newcommand{\altnode}{m}
\newcommand{\altnodem}{m_{-}}
\newcommand{\Aall}{A}
\newcommand{\bvector}{b}
\newcommand{\extcostfunction}{\Phi}
\newcommand{\costfunction}{\phi}
\newcommand{\costLAfunction}[1]{\costfunction_{#1}}
\newcommand{\cost}[1]{\costfunction(#1)}
\newcommand{\extcost}[1]{\extcostfunction(#1)}
\newcommand{\costLA}[2]{\costLAfunction{#1}(#2)}
\newcommand{\xall}{\bm{x}}
\newcommand{\xallsol}{\xall^\ast}
\newcommand{\xDSO}{\xall_0}
\newcommand{\xallk}[1]{\xall^{#1}}
\newcommand{\xLAs}{\xall_\Aggregator}
\newcommand{\xLA}[1]{\xall_{#1}}
\newcommand{\XDSO}{\mathcal{X}_0}
\newcommand{\XLA}[1]{\mathcal{X}_{#1}}
\newcommand{\dualvariable}{y}
\newcommand{\dualk}[1]{\dualvariable^{#1}}
\newcommand{\dualvariablecn}[2]{\bm{\dualvariable}^{\textup{#1}}_{#2}}
\newcommand{\dlmqn}[1]{\dualvariablecn{q}{#1}}
\newcommand{\dlmpn}[1]{\dualvariablecn{p}{#1}}
\newcommand{\constraintfunction}{h}
\newcommand{\constraint}[1]{\constraintfunction(#1)}
\newcommand{\cur}{\ell}
\newcommand{\vcur}{\bm{\cur}}
\newcommand{\curn}[1]{\vcur_{#1}}
\newcommand{\curnt}[2]{\cur_{#1,#2}}
\newcommand{\vol}{v}
\newcommand{\vvol}{\bm{\vol}}
\newcommand{\voln}[1]{\vvol_{#1}}
\newcommand{\volnt}[2]{\vol_{#1,#2}}
\newcommand{\bvol}{V}
\newcommand{\ubvol}{\bar{\bvol}}
\newcommand{\ubvoln}[1]{\ubvol_{#1}}
\newcommand{\lbvol}{\underline{\bvol}}
\newcommand{\lbvoln}[1]{\lbvol_{#1}}
\newcommand{\apf}{f}
\newcommand{\vapf}{\bm{\apf}}
\newcommand{\apfn}[1]{\vapf_{#1}}
\newcommand{\apfnt}[2]{\apf_{#1,#2}}
\newcommand{\rpf}{g}
\newcommand{\vrpf}{\bm{\rpf}}
\newcommand{\rpfn}[1]{\vrpf_{#1}}
\newcommand{\rpfnt}[2]{\rpf_{#1,#2}}
\newcommand{\ubpf}{S}
\newcommand{\ubpfn}[1]{\ubpf_{#1}}
\newcommand{\napc}{p}
\newcommand{\vnapc}{\bm{\napc}}
\newcommand{\napcn}[1]{\vnapc_{#1}}
\newcommand{\napcnt}[2]{\napc_{#1,#2}}
\newcommand{\apc}{\napc^{\consumed}}
\newcommand{\apcnt}[2]{\apc_{#1,#2}}
\newcommand{\lbmapc}{E}
\newcommand{\lbmapcn}[1]{\lbmapc_{#1}}
\newcommand{\bapc}{P}
\newcommand{\ubapc}{\overline{\bapc}}
\newcommand{\ubapcnt}[2]{\ubapc_{#1,#2}}
\newcommand{\ubapcn}[1]{\bm{\ubapc}_{#1}}
\newcommand{\lbapc}{\underline{\bapc}}
\newcommand{\lbapcnt}[2]{\lbapc_{#1,#2}}
\newcommand{\lbapcn}[1]{\bm{\lbapc}_{#1}}
\newcommand{\ratiopc}{\tau^{\consumed}}
\newcommand{\ratiopcn}[1]{\ratiopc_{#1}}
\newcommand{\nrpc}{q}
\newcommand{\vnrpc}{\bm{\nrpc}}
\newcommand{\nrpcn}[1]{\vnrpc_{#1}}
\newcommand{\nrpcnt}[2]{\nrpc_{#1,#2}}
\newcommand{\rpc}{\nrpc^{\consumed}}
\newcommand{\rpcnt}[2]{\rpc_{#1,#2}}
\newcommand{\app}{\napc^{\produced}}
\newcommand{\appnt}[2]{\app_{#1,#2}}
\newcommand{\lbratioapp}{\underline{\rho}^{\produced}}
\newcommand{\ubratioapp}{\overline{\rho}^{\produced}}
\newcommand{\lbratioappnt}[2]{\lbratioapp_{#1,#2}}
\newcommand{\ubratioappnt}[2]{\ubratioapp_{#1,#2}}
\newcommand{\rpp}{\nrpc^{\produced}}
\newcommand{\rppnt}[2]{\rpp_{#1,#2}}
\newcommand{\res}{R}
\newcommand{\resn}[1]{\res_{#1}}
\newcommand{\rea}{X}
\newcommand{\rean}[1]{\rea_{#1}}
\newcommand{\con}{G}
\newcommand{\conn}[1]{\con_{#1}}
\newcommand{\sus}{B}
\newcommand{\susn}[1]{\sus_{#1}}
\newcommand{\lagrangianfunction}{L}
\newcommand{\lagrangian}[2]{\lagrangianfunction(#1,#2)}
\DeclarePairedDelimiter{\abs}{\lvert}{\rvert}
\newcommand{\domain}{\texttt{dom}}
\DeclareMathOperator{\prox}{\texttt{prox}}
\DeclareMathOperator{\tr}{Tr}
\DeclarePairedDelimiter{\norm}{\|}{\|}
\newcommand{\Real}{\mathbb{R}}
\newcommand{\symm}{\mathbb{S}}
\newcommand{\R}{\Real}
\newcommand{\Natural}{\mathbb{N}}
\newcommand{\Naturalzero}{\Natural_0}
\newcommand{\Prob}{\mathbb{P}}
\newcommand{\Ex}{\mathbb{E}}
\newcommand{\revise}[1]{{\color{black}#1}}
\newacro{PDA}{primal-dual algorithm}
\newacro{SOCP}{Second-Order Conic Programming}
\newacro{DSO}{Distribution System Operator}
\newacro{LA}{Load Aggregator}
\newacro{OPF}{Optimal Power Flow}
\newacro{AC}{alternative current}
\newacro{DER}{Distributed Energy Resource}
\newacro{LMP}{Locational Marginal Price}
\newacro{DLMP}{Distribution Locational Marginal Price}
\newacro{OPF}{Optimal Power Flow}
\newacro{ACOPF}{Alternative Current Optimal Power Flow}
\newacro{PPDLMP}{privacy-preserving DLMP solver}
\newacro{ESO}{Expected Separable Overapproximation}
\newcounter{condition}
\begin{document}

\title{\revise{Random block coordinate methods for inconsistent convex optimisation problems}}

\author*[1]{\fnm{Mathias}\sur{Staudigl}}\email{mathias.staudigl@uni-mannheim.de}

\author[2]{\fnm{Paulin}\sur{Jacquot}}\email{paulin.jacquot@edf.fr}

\affil*[1]{\orgdiv{Department of Business Informatics and Mathematics}, \orgname{University of Mannheim}, \orgaddress{\street{B6}, \city{Mannheim}, \postcode{68159}, \country{Germany}}}

\affil[2]{\orgdiv{OSIRIS Department}, \orgname{EDF Lab Saclay}, \orgaddress{\street{Bd Gaspard Monge}, \city{Palaisseau}, \postcode{91129}, \country{France}}}

\abstract{We develop a novel randomised block coordinate primal-dual algorithm for a class of non-smooth ill-posed convex programs. Lying in the midway between the celebrated Chambolle-Pock primal-dual algorithm and Tseng's accelerated proximal gradient method, we establish global convergence of the last iterate as well optimal $O(1/k)$ and $O(1/k^{2})$ complexity rates in the convex and strongly convex case, respectively, $k$ being the iteration count. \revise{Motivated by the increased complexity in the control of distribution level electric power systems}, we test the performance of our method on a second-order cone relaxation of an AC-OPF problem. Distributed control is achieved via the distributed locational marginal prices (DLMPs), which are obtained \revise{as} dual variables in our optimisation framework.  }


\keywords{Convex Optimisation, Random block coordinate method, Nesterov Acceleration, Distributed Optimisation, AC-OPF}

\pacs[MSC Classification]{90C25, 90C06, 90B10}

\maketitle

\section{Introduction}
\label{sec:intro}
In this paper we study non-smooth convex composite optimisation problems of the form 
\begin{equation}\label{eq:P}\tag{P}
\min_{\xx\in\X}\{\Psi(\xx):=\Phi(\xx)+R(\xx)\},
\end{equation}
where $\X:=\arg\min\left\{\frac{1}{2}\norm{\bm{A}\bm{z}-\bm{b}}^{2}\vert\bm{z}\in\R^{m}\right\},$ and the functions $\Phi:\R^{m}\to\R$ and $R:\R^{m}\to(-\infty,\infty]$ are convex and additively decomposable of the form $\Phi(\xx):=\sum_{i=1}^{d}\Phi_{i}(\xx_{i})$ and $R(\xx):=\sum_{i=1}^{d}r_{i}(\xx_{i})$. We assume that each function $\phi_{i}:\R^{m_{i}}\to\R$ is smooth, whereas $r_{i}(\cdot)$ is only required to be proper convex and lower semi-continuous. We typically think of the smooth component $\phi_{i}(\cdot)$ as a convex loss function, whereas $r_{i}(\cdot)$ can take over to role of a regularising or penalty function. In particular, $r_{i}(\cdot)$ can represent an indicator function of a closed convex set $\mathcal{K}_{i}\subset\R^{m_{i}}$, representing individual membership constraints of the decision variable $\xx_{i}\in\R^{m_{i}}$. Thus, problem \eqref{eq:P} can include certain separable block constraints in addition to the non-separable constraints embodied in the set $\X$. This setting gains relevance in distributed optimisation problems with joint coupling constraints, such as multi-agent optimisation problems \cite{facchinei07}. Other examples include convex penalty functions, like the $\ell_{1}$-norm on $\R^{m_{i}}$. The matrix $\bm{A}=[\bm{A}_{1};\ldots;\bm{A}_{d}]$ is decomposed in $q\times m_{i}$ matrices $\bm{A}_{i}\in\R^{q\times m_{i}}$. Accordingly, we use the notation $\xx=[\xx_{1};\ldots;\xx_{d}]$ with each $\xx_{i}\in\R^{m_{i}}$ to represent the blocks of coordinates of $\xx$, and $m=m_{1}+\ldots+m_{d}$. The joint restriction $\xx\in\X$ state that a feasible decision variable is a solution to a linear least squares problem and could be equivalently described as the set of solutions to the normal equations $\X=\{\xx\in\R^{m}\vert \bm{A}^{\top}\bm{A}\xx=\bm{A}^{\top}\bm{b}\}$. When $\bm{b}$ is in the range of $\bm{A}$, then we can solve the linear system $\bm{A}\xx=\bm{b}$ exactly, and the optimisation problem reduces to a linearly constrained convex non-smooth minimisation problem
\begin{equation}\label{eq:Pconsistent}
\min_{\xx}\{\Psi(\xx)=\Phi(\xx)+R(\xx)\} \quad\text{s.t.: } \bm{A}\xx=\bm{b}.
\end{equation}
We call this the \emph{consistent case}. Problems of type \eqref{eq:Pconsistent} are very general and include all generic conic optimisation problems, such as linear programming, second-order cone optimisation and semi-definite programming. In particular, partitioning the matrix $\bm{A}$ into suitably defined blocks, problem \eqref{eq:Pconsistent} is a canonical formulation of various distributed optimisation problems, as the next examples illustrate. 

\begin{example}[Consensus]
Consider the problem 
$$
\min_{\xx\in\R^{p}}\sum_{i=1}^{n}r_{i}(\xx) 
$$
where $r_{1},\ldots,r_{d}$ are closed convex functions on $\R^{p}$. This problem is equivalent to 
$$
\min_{\xx_{1},\ldots,\xx_{d}}\sum_{i=1}^{n}r_{i}(\xx_{i})\quad \text{ s.t.: }\xx_{1}=\xx_{2}=\ldots=\xx_{n}
$$
This can be written as a linear constrained optimisation problem with matrix $\bm{A}$ corresponding to the Laplacian of a connected undirected graph and linear constraint $\bm{A}\xx=0$. 
\end{example}

\begin{example}[Distributed model fitting]
Problem \eqref{eq:P} can also cover composite minimisation problems which canonically arise in machine learning. Consider the problem 
$$
\min_{\bm{u}\in\R^{p}}\ell(\bm{K}\bm{u}-\bm{b})+\rho(\bm{u}),
$$
where $\ell(\cdot)$ is a convex and smooth statistical loss function of the form 
$$
\ell(\bm{K}\bm{u}-\bm{b})=\sum_{i=1}^{q}\ell_{i}(\bm{K}_{i}^{\top}\bm{u}-b_{i})
$$
where $\ell_{i}:\R\to\R$ is the loss for the $i$-th training example, $\bm{K}_{i}\in\R^{p}$ is the feature vector for example $i$, and $b_{i}$ is the output or response for example $i$. Define the variable $\xx=\left[\begin{array}{c}\bm{u}\\ \bm{v}\end{array}\right]\in\R^{p+q}=\R^{m}$ and the linear operator $\bm{A}\xx=\bm{K}\bm{u}-\bm{v}$. Define the functions $\phi_{i}:\R\to\R$ and $r_{i}:\R\to(-\infty,\infty]$ by 
$$
\phi_{i}(\xx_{i}):=\left\{\begin{array}{ll} 
\ell_{i}(v_{i}) & \text{if }p+1\leq i\leq p+d\\ 
0 & \text{ if }1\leq i\leq p 
\end{array}\right.\quad r_{i}(\xx_{i}):=\left\{\begin{array}{ll} 
\rho_{i}(u_{i}) & \text{if }1\leq i\leq p \\ 
0 & \text{ if }p+1\leq i\leq m 
\end{array}\right.
$$
Setting $\Phi(\xx)=\sum_{i=1}^{m}\phi_{i}(\xx_{i})$ and $R(\xx)=\sum_{i=1}^{m}r_{i}(\xx_{i})$ yields the convex optimisation problem 
$$
\min_{\xx}\Phi(\xx)+R(\xx)\quad \text{s.t.: }\bm{A}\xx=\bm{b}.
$$
\end{example}

Problem \eqref{eq:P} is more general than the examples just presented, since we do not insist on $\bm{b}$ belonging to the range of $\bm{A}$. Hence, the optimisation problems of interest to us are non-smooth convex problems with \revise{inconsistent} linear restrictions under which the exact condition $\bm{A}\xx=\bm{b}$ is replaced by the approximate condition $\bm{A}\xx\approx\bm{b}$. This assumption gains relevance in inverse problems and PDE-constrained optimisation, where problems of the from \eqref{eq:P} appear in the method of residuals (Morozov regularisation) \cite{Grasmair2011}. Another instance where ill-posed linear restrictions appear is studied in an application to power systems in Section \ref{sec:application} of this work. Motivated by all these different optimisation problems, this paper derives a unified random \revise{block-coordinate method} which solves problem \eqref{eq:P} under very general assumptions on the problem data.

\subsection{Related Methods}

Our algorithms are closely related to randomised coordinate descent methods, primal–dual coordinate update methods, and accelerated primal–dual methods. In this subsection, let us briefly review these three classes of methods and discuss their relations to our work.

\paragraph{Linear constrained minimisation}
From the theoretical standpoint, one could formulate problem \eqref{eq:P} as a linear constrained optimisation problem of the type \eqref{eq:Pconsistent}, with linear restriction $\bm{A}^{\top}\bm{A}\xx=\bm{A}^{\top}\bm{b}$ (the normal equations). Hence, one could approach problem \eqref{eq:P} via primal-dual techniques directly. While we will show that our main algorithmic scheme (Algorithm \ref{alg:RBCD}) is indeed equivalent to a suitably defined primal-dual method, it can be argued that this connection is not always a recommended solution strategy in practice. First, a primal-dual implementation has to deal with the symmetric matrix $\bm{A}^{\top}\bm{A}$, whose dimension is $m\times m$. If $q$ is much smaller than $m$ then primal-dual methods would have to process many more data points than direct approaches. Second, if $\bm{A}$ is a sparse matrix, then $\bm{A}^{\top}\bm{A}$ is usually not sparse anymore, which leads to heavy use of numerical linear algebra techniques. Finally, it is often not known a-priori whether the linear system is de facto consistent with the given inputs.
 This is in particular the case in the application motivating the development of the numerical scheme to be presented in this paper, which is concerned with the distributed \revise{optimisation} of an electrical distribution system within an AC-optimal power flow framework \cite{Wood13,BienstockCascades}. A basic stability desideratum on a numerical solution method (acting as a decentralised coordination mechanism in our application) is therefore that overall system convergence is guaranteed even if the linear constraints are not satisfied with equality. Our method is exactly achieving this. Section \ref{sec:application} illustrates the performance of our method on a 15-bus AC-OPF problem taken from \cite{papavasiliou18}. 

\paragraph{Primal penalty methods}
An alternative and popular approach to solve \eqref{eq:P} is the penalty method. It consists in solving a sequence of unconstrained optimisation problems 
$
\min_{\xx} \Psi(\xx)+\frac{\sigma_{k}}{2}\norm{\bm{A}\xx-\bm{b}}^{2}, 
$
where $\sigma_{k}$ is a positive and increasing penalty parameter sequence. Intuitively it is clear that, by choosing $\sigma_{k}$ larger, the more importance the optimisation gives to the constraints. Since penalty methods are entirely primal, they do not use duality arguments, and hence they are in principle able to solve the inconsistent case as well. However, their implementation usually involves two loops: an inner loop solving the minimisation problem for a fixed parameter $\sigma_{k}$ to some desired accuracy, followed by an outer loop describing how the penalty parameter is updated. Viewed from this angle, our algorithm is performing these operations in a single-loop fashion. 

\paragraph{Randomised coordinate descent methods}
In the absence of the linear constraints, our algorithm specialises to randomised coordinate descent (RCD), which was first proposed in \cite{NesCoordinate12}, and later generalised to the non-smooth case in \cite{LuXia15,RicTak14}. It was shown that RCD features sublinear rates of convergence with rate $O(1/k)$, $k$ being the iteration counter. Acceleration to a $O(1/k^2)$ complexity and even linear rates for strongly convex problems has been obtained. Extensions to parallel computations, important for large-scale optimization problems, were first proposed in \cite{RicTak16}.

\paragraph{Primal-dual coordinate update methods}
To cope with linear constraints, a very popular approach is the alternating direction method of multipliers (ADMM). Originally, ADMM \cite{GlowMar75,GabMerc76} was proposed for two-block structured problems with separable objective. The convergence and complexity analysis of this method is well documented in the literature \cite{HanSunZha17}; see \cite{EckYao15} for a survey. Direct extensions of the ADMM to multi-block settings such as \eqref{eq:P} is not straightforward, and indeed even may fail to converge \cite{ChenHe16}. Very recently, \cite{Gao:2019aa} proposed a randomised primal–dual coordinate (RPDC) update method, whose asynchronous parallel version was then studied in \cite{Gao:2017aa}. It was shown that RPDC converges with rate $O(1/k)$ under convexity assumption. Improved complexity statementes for multi-block ADMM can be found in \cite{Deng2016}. 

\paragraph{Accelerated primal-dual methods}
It is possible to accelerate the rate of convergence from $O(1/k)$ to $O(1/k^{2})$ for gradient type methods. The first acceleration result was shown by \cite{Nes83} for solving smooth unconstrained problems. The technique has been generalised to accelerated gradient-type methods on possibly non-smooth convex programs \cite{FISTA,Nesterov:2013aa}. Primal–dual methods on solving linearly constrained problems can also be accelerated by similar techniques. Under convexity assumption, the augmented Lagrangian method (ALM) is accelerated from $O(1/k)$ to $O(1/k^{2})$ in \cite{Kang:2015aa}.

\subsection{Contribution}

\paragraph{Methodological contributions}
 We propose a block-coordinate implementation of the method developed by \cite{malitsky2019primaldual} for linearly constrained optimisation, lying midway between the celebrated primal-dual hybrid gradient algorithm \cite{chambolle11} and Tseng's accelerated proximal gradient method \cite{tseng08apgm}. Specifically, our proposed method is a distributed interpretation of the primal-dual algorithm of~\cite{chambolle11} which operates on randomly selected coordinate blocks. A parallel between the Chambolle-Pock method  \cite{chambolle11} and the accelerated proximal gradient of~\cite{tseng08apgm} was already drawn in~\cite{malitsky17,malitsky2019primaldual}. Reducing the primal-dual algorithm to an implementation of Tseng's method enabled~\cite{malitsky17} to derive new convergence results based on primal arguments, thus departing from strong duality requirements and the ergodic rates typically issued for primal-dual methods. Our developments revisit the coordinate-descent implementation proposed in~\cite{luke18} for the basic algorithm of~\cite{malitsky2019primaldual}, to a block-coordinate descent setting featuring Nesterov type of acceleration. In particular, in the strongly convex case, we derive a new step size policy that achieves an accelerated rate of $O(k^{-2})$. Besides the recent preprint \cite{TranLiu21}, we are not aware of a similar algorithm achieving the accelerated convergence rates $O(k^{-2})$ in a fully distributed computational setting. Thus, our main contributions in this paper can be summarised as follows:
\begin{itemize}
\item[(i)] In the convex case, provided problem \eqref{eq:P} possesses a saddle point (defined in Section \ref{sec:Saddle}) our main result is Theorem \ref{thm:convex} which establishes an $O(1/k)$ iteration complexity in terms of the objective function gap and convergence of the last iterate. 
\item[(ii)] In the strongly convex case and uniform sampling of coordinate blocks, our main result is Theorem \ref{thm:sconvex}, which proves an accelerated $O(k^{-2})$ convergence rate in the primal objective function values.
\end{itemize}
We remark that RCD methods have been shown to exhibit linear convergence rates in the strongly convex regime \cite{NesCoordinate12}. Such fast convergence is, however, not to be expected in the presence of linear constraints. While strong convexity of the primal objective ensures smoothness of the Lagrangian dual function, but not its strong concavity. Hence, in general, we do not expect to see linear convergence rates by only assuming strong convexity in the primal. However, we note that \cite{Xu2018} obtain linear convergence rates in the \revise{consistent} regime if there is one block variable that is independent of all others in the objective (but coupled in the linear constraint) and also the corresponding component function is smooth. 
\\
Related to this paper is also the very recent work \cite{Tran-Dinh:2022aa}. They consider a larger class of convex optimisation problems with linear constraints and design a new randomized primal-dual algorithm with single block activation in each iteration and similar complexity results as reported in the present work. However, our method is able to solve \revise{inconsistent} convex optimisation problems with general sampling techniques.

\subsection{Organisation of this paper}
This paper is organised as follows. Section \ref{sec:prelims} describes our block coordinate descent framework. Section \ref{sec:Algorithm} explains in detail our algorithmic approach. Section \ref{sec:convergence} contains all details for the asymptotic convergence and \revise{finite-time} complexity statements. Section \ref{sec:application} describes a challenging application of our algorithm to a distributed optimisation formulation of an AC-OPF problem formulate the distribution grid model, based on the second-order cone relaxation of \cite{farivar13,peng18}. Preliminary numerical results are reported to show the applicability of our method using a 15 bus network studied in \cite{papavasiliou18} as a concrete example.

\section{Preliminaries}
\label{sec:prelims}
\subsection{Notation}
We work in the space $\R^{p}$ composed by column vectors. For $\bm{x},\bm{u}\in\R^{p}$ denote the standard Euclidean inner product $\inner{\bm{x}}{\bm{u}}=\bm{x}^{\top}\bm{u}$ and the Euclidean norm $\norm{\bm{x}}=\inner{\bm{x}}{\bm{x}}^{1/2}$. We let $\symm^{p}_{+}:=\{\bm{B}\in\R^{p\times p}\vert \bm{B}^{\top}=\bm{B},\bm{B}\succeq 0\}$ the space of positive definite matrices. Given $\Lambda\in\symm^{p}_{+}$, we let $\normx{\xall}{\Lambda}:=\inner{\Lambda\xall}{\xall}^{1/2}$ for $\xall\in\R^{p}$. The identity matrix of dimension $p$ is denoted as $\bm{I}_{p}$. Whenever the dimension is clear from the context, we will just write $\bm{I}$. We denote by $\lambda_{\max}(\bm{A})$ the largest eigenvalue of a square $p\times p$ matrix $\bm{A}$. 
We call by $\Gamma_{0}(\R^{p})$ the set of proper convex, lower semi-continuous functions $f:\R^{p}\to (-\infty,+\infty]$. For such a function $f\in\Gamma_{0}(\R^{p})$ the effective domain is defined as $\dom(f):=\{\bm{x}\vert f(\bm{x})<\infty\}$. For $d\in\N$, we set $[d]:=\{1,\ldots,d\}$. For a vector $\nu\in\R^{p}_{++}$, we let $\nu^{-1}$ the vector of reciprocal values. For $\Gamma\in\symm^{m}_{++}$, define the weighted proximal operator 
$\prox^{\Gamma}_{r}(\bm{x}):=\arg\min_{\bm{u}\in\R^{m}}\{r(\bm{u})+\frac{1}{2}\norm{\bm{u}-\bm{x}}_{\Gamma}^{2}\}.$
If $\Gamma:=\blkdiag(\Gamma_{1},\ldots,\Gamma_{d})$, then the proximal operator decomposes accordingly 
$
\prox^{\Gamma}_{r}(\bm{x})=\left(\prox^{\Gamma_{1}}_{r_{1}}(\bm{x}_{1}),\ldots,\prox_{r_{d}}^{\Gamma_{d}}(\bm{x}_{d})\right). 
$
If $f:\R^{m}\to\R$ is differentiable, we denote the gradient of $f$ at $\xx\in\R^{m}$ by $\nabla f(\xx)\in\R^{m}$.

\subsection{Block structure}
We first describe the block setup which has become standard in the analysis of block coordinate methods \cite{RicTak14,fercoq15,RicTak16,Ryu2022}. The block structure of \eqref{eq:P} is given by a decomposition of $\R^{m}$ into $d$ subspaces $\R^{m_{i}},1\leq i\leq d$, so that $\R^{m}=\R^{m_{1}}\times\cdots\times\R^{m_{d}}$. Let $\bm{U}=[\bm{U}_{1},\ldots,\bm{U}_{d}]$ be the $m\times m$ identity matrix, decomposed into column submatrices $\bm{U}_{i}\in\R^{m\times m_{i}}$. For $\xx\in\R^{m}$, let $\xx_{i}=\bm{U}^{\top}_{i}\xx$ be the block of coordinates corresponding to the columns of $\bm{U}_{i}$. Any vector $\bm{s}\in\R^{m}$ can be written as $\bm{s}=\sum_{i=1}^{d}\bm{U}_{i}\bm{s}_{i}$. For $\emptyset\neq I\subseteq[d]$, we write 
$$
\bm{s}_{[I]}=\sum_{i\in I}\bm{U}_{i}\bm{s}_{i}.
$$
%

We denote the $\ell^{2}$-norm on $\R^{m_{i}}$ as $\norm{\cdot}_{i}$. If $\bm{Q}=\blkdiag[\bm{Q}_{1};\ldots,\bm{Q}_{d}]$ is a block-diagonal matrix with $\bm{Q}_{i}\in\symm^{m_{i}}_{++}$, we define a weighted norm on $\R^{m}$ by 
$$
\norm{\bm{s}}_{\bm{Q}}=\sum_{i=1}^{d}\norm{\bm{s}_{i}}_{\bm{Q}_{i}}\qquad\forall \bm{s}\in\R^{m}.
$$

\subsection{Smoothness of $\Phi$}
We assume throughout the paper that $\Phi:\R^{m}\to \R$ is convex and possesses a Lipschitz continuous partial gradient. Specifically, we assume that for each block $i\in[d]$ there exists a matrix $\Lambda_{i}\in\symm^{m_{i}}_{++}$ so that 
\begin{equation}\label{eq:phiconvex}
0 \leq \phi_{i}(\xx_{i}+\bm{t}_{i})-\phi_{i}(\xx_{i})-\inner{\nabla \phi_{i}(\xx_{i})}{\bm{t}_{i}}\leq\frac{1}{2}\norm{\bm{t}_{i}}^{2}_{\Lambda_{i}},
\end{equation}
A typical situation is that $\Lambda_{i}=L_{i}\bm{I}_{m_{i}}$ for a scalar $L_{i}>0$, so that the gradient $\nabla\phi$ is Lipschitz continuous with modulus $L_{i}>0$. Allowing for matrix-valued parameters increases generality and takes into account that norms on the factors $\R^{m_{i}}$ might differ from block to block. Collecting all the matrices $\Lambda_{i}$ in one block diagonal matrix $\Lambda:=\blkdiag[\Lambda_{1};\ldots;\Lambda_{d}]$, it follows 
$$
\Phi(\xx')-\Phi(\xx)-\inner{\nabla\Phi(\xx)}{\xx'-\xx}\leq\frac{1}{2}\norm{\xx'-\xx}^{2}_{\Lambda}.
$$

\subsection{Properties of $R$}
We assume that $R:\R^{m}\to(-\infty,\infty]$ is block separable $R(\xx)=\sum_{i=1}^{d}r_{i}(\xx_{i})$, where the functions $r_{i}:\R^{m_{i}}\to(-\infty,\infty]$ are $\mu_{i}$-strongly convex and closed, with $\mu_{i}\geq 0$. Calling $\Upsilon_{i}=\mu_{i}\bm{I}_{m_{i}}$, this gives 
\begin{equation}\label{eq:rconvex}
r(\xx'_{i})\geq r_{i}(\xx_{i})+\inner{\xi_{i}}{\xx'_{i}-\xx_{i}}+\frac{1}{2}\norm{\xx'_{i}-\xx_{i}}^{2}_{\Upsilon_{i}}\qquad\forall \xx'_{i}\in\R^{m_{i}},\forall\xi\in\partial r_{i}(\xx_{i}).
\end{equation}
Typical examples for the regulariser $r_{i}$ are indicator functions of closed convex sets, i.e. $r_{i}(\xx_{i})=\delta_{\mathcal{K}_{i}}(\xx_{i})$, for $\mathcal{K}_{i}\subset\R^{m_{i}}$ convex and closed, or structure-imposing regularisers like the $L_{p}$-penalty $r_{i}(\xx_{i})=\norm{\xx_{i}}^{p}_{L_{p}(\R^{m_{i}})}$ ($p\geq 1$), prominent in distributed estimation of high-dimensional signals and neural networks. We let $\Convex=\blkdiag[\Convex_{1};\ldots;\Convex_{d}]$ be the $m\times m$ matrix collecting all strong-convexity parameters of the individual blocks. 

\subsection{Quadratic Penalty function}
Define the function 
\begin{equation}\label{eq:h}
h(\xx):=\frac{1}{2}\norm{\bm{A}\xx-b}^{2}.
\end{equation}
Let $h^{*}=\min_{\xx\in\R^{d}} h(\xx)$, so that $\X=\arg\min_{\R^{d}}h(\xx)$. Clearly, $h^{\ast}\geq 0$, with equality if and only if the linear system $\bm{A}\xx=b$ is consistent. 

Since $h$ is quadratic, we have for all $\bm{u},\bm{w},\xx\in\R^{m}$ 
\begin{equation}\label{eq:3pointh}
h(\bm{u})-h(\xx)=\inner{\nabla h(\bm{w})}{\bm{u}-\xx}+\frac{1}{2}\norm{\bm{A}(\bm{u}-\bm{w})}^{2}-\frac{1}{2}\norm{\bm{A}(\xx-\bm{w})}^{2}.
\end{equation}
In particular, if $\xx^{*}\in\X$, the above implies for $\xx=\bm{w}=\xx^{*}$
$$
h(\xx)-h(\xx^{*})=\frac{1}{2}\norm{\bm{A}(\xx-\xx^{*})}^{2}.
$$
We also have 
\begin{align*}
h(\xx+\bm{U}_{i}\bm{t}_{i})&=h(\xx)+\inner{\nabla h(\xx)}{\bm{U}_{i}\bm{t}_{i}}+\frac{1}{2}\norm{\bm{A}_{i}\bm{t}_{i}}^{2}\\
&= h(\xx)+\inner{\bm{U}_{i}^{\top}\nabla h(\xx)}{\bm{t}_{i}}+\frac{1}{2}\norm{\bm{t}_{i}}^{2}_{\bm{A}_{i}^{\top}\bm{A}_{i}}\\
&\leq h(\xx)+\inner{\bm{U}_{i}^{\top}\nabla h(\xx)}{\bm{t}_{i}}+\frac{\lambda_{i}}{2}\norm{\bm{t}_{i}}^{2}_{i}\\
\end{align*}
where $\lambda_{i}\equiv\lambda_{\max}(\bm{A}_{i}^{\top}\bm{A}_{i})\equiv\norm{\bm{A}_{i}}_{2}$, the spectral norm of the matrix $\bm{A}_{i}$, and Lemma \ref{lem:BauCom} (proven in Appendix \ref{app:auxiliary}) immediately implies that for all $t\in[0,1]$
\begin{equation}\label{eq:hstrongconvex}
h(t\xx+(1-t)\xx')=th(\xx)+(1-t)h(\xx')-\frac{t(1-t)}{2}\norm{\bm{A}(\xx-\xx')}^{2}.
\end{equation}

\subsection{On Saddle points}
\label{sec:Saddle}
The optimization problem can be equivalently expressed as the linear constrained optimisation problem 
\begin{equation}
\min_{\xx=(\xx_{1},\ldots,\xx_{d})}\Psi(\xx)=\Phi(\xx)+R(\xx)\quad\text{s.t.: }\bm{A}^{\top}\bm{A}\xx=\bm{A}^{\top}b
\end{equation}
The Lagrangian associated with this non-smooth convex optimization problem is 
$$
\mathcal{L}(\xx,\bm{y})=\Psi(\xx)+\inner{\bm{y}}{\bm{A}^{\top}b-\bm{A}^{\top}\bm{A}\xx}.
$$
\begin{definition}
A pair $(\xx^{\ast},\bm{y}^{\ast})$ is called a \emph{saddle-point} if 
\begin{equation}\label{eq:KKT}
0\in\partial\Psi(\xx^{\ast})-\bm{A}^{\top}\bm{A}\bm{y}^{\ast},\quad \bm{A}^{\top}\bm{A}\xx^{\ast}-\bm{A}^{\top}b=0.
\end{equation}
\end{definition}
For convex programs, the conditions \eqref{eq:KKT} are sufficient for $\xx^{\ast}$ to be a solution of \eqref{eq:P}. They are also necessary if a constraint qualification condition holds (e.g. the Slater condition, stating that there exists $\xx$ in the interior of the domain of $\Psi$ such that $\bm{A}^{\top}\bm{A}\xx=\bm{A}^{\top}b$). 

\subsection{Random Sampling}
\label{sec:sampling}
We next introduce our random sampling strategy. Our approach is very general, and allows for virtually all existing sampling strategies considered in the literature. We refer the reader to \cite{QuRic16I,QuRic16II} for an in-depth systematic overview on this topic. 

Let $(\Omega,\mathcal{F},\Prob)$ be a probability space. By a sampling we mean a random set-valued mapping with values in $2^{[d]}$. We will call the random variable $\mathcal{I}:\Omega\to 2^{[d]}$ a \emph{random sampling}. $\mathcal{I}(\omega)$ defines the selection of blocks employed in a single iteration of our method. The set $\mathcal{I}(\Omega)=\Sigma$ is the set of all possible realisations of the random selection mechanism. Let $\{\iota_{k}\}_{k\in\N}$ represent the stochastic process on $2^{[d]}$ in which each random variable $\iota_{k}$ is an i.i.d copy of $\mathcal{I}$. We refer to such a sampling as an \emph{i.i.d sampling}. We assume that the sampling $\mathcal{I}$ is \emph{proper}: There exists a vector $\bm{\pi}=[\pi_{1},\ldots,\pi_{d}]\in\R^{d}$ with 
$$
\pi_{i}=\Prob(i\in \mathcal{I})\in(0,1)\qquad\forall i\in[d].
$$
With the sampling $\mathcal{I}$, we associate the matrix $\bm{\Pi}\in\R^{d\times d}$ defined as 
$$
(\bm{\Pi})_{ij}:=\Prob(\{i,j\}\subseteq\mathcal{I})\qquad \forall i\neq j, \text{ and }(\bm{\Pi})_{ii}=\pi_{i}.
$$
We note that $\bm{\Pi}\succ 0$ \cite[Thm 3.1]{QuRic16II}. We further define the weighting matrix 
$$
\bm{P}:=\blkdiag[\frac{1}{\pi_{1}}\bm{I}_{m_{1}},\ldots,\frac{1}{\pi_{d}}\bm{I}_{m_{d}}].
$$ 
We emphasise that the random sampling model we adopt here is capable of capturing many stationary randomised activation mechanisms. To illustrate this, consider the following activation mechanisms:
\begin{itemize}
\item \emph{Single coordinate activation:} at each iteration, one coordinate block is activated. This means that $\mathcal{I}(\omega)$ takes values in the discrete set $[d]$ only, i.e. $\Sigma=[d]$. In this case, we necessarily have $\sum_{i=1}^{d}\pi_{i}=1$. 
\item \emph{Uniform Sampling:} for all $i\in[d]$ it holds $\Prob(i\in\mathcal{I})=\Prob(j\in\mathcal{I})$. This implies $\pi_{i}=\frac{\Ex[\abs{\mathcal{I}}]}{d}$ for all $i\in[d]$. A special case of a uniform sampling is the popular class of $m$-nice samplings, arising under the specification where $\Sigma$ is the set of all subsets of $[d]$ containing exactly $m$ elements, each of which is activated with uniform probability. Clearly, in this case one has $\pi_{i}=\frac{m}{d}$ for all $i\in[d]$. 
\item \emph{Full Sampling:} $\Sigma=\{1,\ldots,d\}$, which means that all coordinates are updated in parallel. 
\end{itemize}

\section{\textcolor{blue}{Parallel block coordinate algorithm}}
\label{sec:Algorithm}
\begin{algorithm}[t]
\caption{Distributed Accelerated Proximal Gradient Algorithm}
\label{alg:RBCD}
\DontPrintSemicolon
\SetAlgoNoLine%
\SetKwInOut{Parameters}{{\textbf{Parameters}}}
\SetKwInOut{Init}{{\textbf{Initialization}}}
\SetKwFor{For}{for}{do}{}
\Parameters{ $(\pi_{i})_{i=1}^{d},\bm{Q}^{k}_{i}=\frac{1}{\pi_{i}\tau^{k}}\bm{B}_{i},(\sigma_{k})_{k\geq0},(\tau_{k})_{k\geq 0}$} 
\Init{$\xx^{0}=\bm{w}^{0}\in\R^{m},S_{-1}=0,S_{0}=1$} 
\smallskip 
\SetAlgoLined
%
\For{$k=0,1,2,\dots$}{

$\bm{z}^{k} = \frac{1}{S_{k}}(S_{k-1}\bm{w}^{k}+\sigma_{k}\xx^{k})$\;
%
\text{ {\textbf{draw~$\iota_{k}\subseteq[d]$ as an i.i.d copy of the sampling $\mathcal{I}$.}} } \;
\text{\textbf{Update}}
$$
\xx_{i}^{k+1}=\left\{\begin{array}{ll} 
\FB^{k}_{i}(\xx^{k})\text{ defined in eq.}\eqref{eq:FB} & \text{if }i\in\iota_{k},\\
\xx_{i}^{k} & \text{otherwise.} 
\end{array}\right.
$$
$\bm{w}^{k+1}=\bm{z}^{k}+\frac{\sigma_{k}}{S_{k}}\bm{P}(\xx^{k+1}-\xx^{k})$ \;
%
$S_{k+1}=S_{k}+\sigma_{k+1}$\;
}
 \end{algorithm}
Our random \revise{block-coordinate algorithm} for solving \eqref{eq:P} recursively updates three sequences $\{(\bm{z}^{k},\bm{w}^{k},\xx^{k})\}_{k\geq 0}$. Let $(\sigma_{k})_{k\geq 0}$ be a given sequence of positive numbers. At each iteration $k\geq0$, we define a weight sequence $(S_{k})_{k\geq 0}$ recursively by
$$
S_{0}=1,\quad S_{k}=S_{k-1}+\sigma_{k},\quad\theta_{k}=\frac{\sigma_{k}}{S_{k}}.
$$
Consider the extrapolated point 
\begin{equation}\label{eq:z}
\bm{z}^{k}=(1-\theta_{k})\bm{w}^{k}+\theta_{k}\xx^{k}=\bm{w}^{k}+\theta_{k}(\xx^{k}-\bm{w}^{k}),
\end{equation}
together with the sequence 
\begin{equation}\label{eq:w}
\bm{w}^{k+1}=\bm{z}^{k}+\theta_{k}\bm{P}(\xx^{k+1}-\xx^{k}).
\end{equation}
This reads in coordinates $\bm{w}_{i}^{k+1}=\bm{z}_{i}^{k}+\frac{\theta_{k}}{\pi_{i}}(\xx_{i}^{k+1}-\xx_{i}^{k})$ for all $i\in[d]$. To evaluate $\bm{w}^{k+1}$, we need the primal update $\xx^{k+1}$, which is obtained by a weighted forward-backward step involving the first-order signal 
$
g^{k}_{i}=\nabla\phi_{i}(\xx^{k}_{i})+S_{k}\nabla_{i}h(\bm{z}^{k}).
$
Specifically, given a block-specific scaling matrix $\bm{B}_{i}\in\symm^{m_{i}}_{++}$, we define $\bm{Q}_{i}^{k}:=\frac{1}{\pi_{i}\tau_{k}}\bm{B}_{i}$, and the \emph{weighted forward-backward operator} $\FB^{k}_{i}(\xx^{k})=\prox_{r_{i}}^{\bm{Q}^{k}_{i}}(\xx^{k}_{i}-(\bm{Q}^{k}_{i})^{-1}g^{k}_{i})$, which reads explicitly as 
 \begin{equation}\label{eq:FB}
 \FB^{k}_{i}(\xx^{k}):=\arg\min_{\bm{u}_{i}\in\R^{m_{i}}}\left\{r_{i}(\bm{u}_{i})+\inner{g^{k}_{i}}{\bm{u}_{i}-\xx^{k}_{i}}+\frac{1}{2}\norm{\bm{u}_{i}-\xx^{k}_{i}}^{2}_{\bm{Q}^{k}_{i}}\right\}.
 \end{equation}
We will choose the matrices $\bm{B}_{i}$ later on to adapt for the strong convexity present in the problem data. Putting all these tools together yields Algorithm \ref{alg:RBCD}.

\subsection{Relation to primal-dual methods}
 Our method is very similar to the recent block coordinate primal-dual update \cite{Xu2018}, who focus on the consistent case and uniform samplings \cite{NesCoordinate12,RicTak16,QuRic16II,fercoq15}. We generalise this to arbitrary samplings and show that the sequences produced by Algorithm \ref{alg:RBCD} are equivalent to a primal-dual process in the spirit of \cite{luke18}, formulated as Algorithm \ref{alg:PDA}. 
\begin{algorithm}[t]
\caption{Primal-Dual Block Coordinate Descent Algorithm}
\label{alg:PDA}
\DontPrintSemicolon
\SetAlgoNoLine%
\SetKwInOut{Parameters}{{\textbf{Parameters}}}
\SetKwInOut{Init}{{\textbf{Initialization}}}
\SetKwFor{For}{for}{do}{}
\Parameters{ $(\pi_{i})_{i=1}^{d},\bm{Q}^{k}_{i}=\frac{1}{\pi_{i}\tau^{k}}\bm{B}_{i},(\sigma_{k})_{k\in \Naturalzero }$} 
\Init{$\xx^{0}\in\R^{m},\bm{y}^{0}=\sigma_{0}(\bm{A}\xx^{0}-\bm{b}),\bm{u}^{0}=\bm{A}\xx^{0}-\bm{b}$} 
\smallskip 
\SetAlgoLined
%
\For{$k=0,1,2,\dots$}{
\text{ {\textbf{draw block~$\iota_{k}$ as an i.i.d copy of the sampling $\mathcal{I}$.}} } \;
\text{\textbf{Update}}\; 
$
\xx_{i}^{k+1}=\left\{\begin{array}{ll} 
\prox^{\bm{Q}^{k}_{i}}_{r_{i}}\left(\xx^{k}_{i}-(\bm{Q}^{k}_{i})^{-1}(\nabla\phi_{i}(\xx^{k}_{i})+\bm{A}^{\top}_{i}\bm{y}^{k})\right) &  \text{ if }i\in\iota_{k},\\ 
\xx_{i}^{k} & \text{else.}
\end{array}\right.
$
%
$
\bm{u}^{k+1}=\bm{u}^{k}+\bm{A}(\xx^{k+1}-\xx^{k})\;
$
%

$\bm{y}^{k+1}=\bm{y}^{k}+\sigma_{k}\bm{A}\bm{P}(\xx^{k+1}-\xx^{k})+\sigma_{k+1}\bm{u}^{k+1}$\;
}
 \end{algorithm}
 
Let $\{(\xx^{k},\bm{w}^{k},\bm{z}^{k})\}_{k\geq 0}$ denote the sequences generated by running Algorithm \ref{alg:RBCD}. Let $S_{k}$ be the cumulative step-size process $S_{k}=1+\sum_{t=1}^{k}\sigma_{t}.$ Introduce the sequence $\bm{y}^{k}:=S_{k}(\bm{A}\bm{z}^{k}-\bm{b})$, so that 
$$
S_{k}\nabla_{i}h(\bm{z}^{k})=\bm{A}_{i}^{\top}S_{k}(\bm{A}\bm{z}^{k}-\bm{b})=\bm{A}_{i}^{\top}\bm{y}^{k}.
$$ 
In terms of this new dual variable $\bm{y}^{k}$, we can reorganize the primal update so that 
\begin{align*}
\hat{\bm{x}}^{k+1}_{i}&=\arg\min_{\bm{x}_{i}\in\R^{m_{i}}}\left\{r_{i}(\bm{x}_{i})+\inner{\nabla\phi_{i}(\xx^{k}_{i})+S_{k}\nabla_{i}h(\bm{z}^{k})}{\bm{x}_{i}-\xx^{k}_{i}}+\frac{1}{2}\norm{\bm{x}_{i}-\xx^{k}_{i}}^{2}_{\bm{Q}^{k}_{i}}\right\}.\\
&=\arg\min_{\bm{x}_{i}\in\R^{m_{i}}}\left\{r_{i}(\bm{x}_{i})+\inner{\nabla\phi_{i}(\xx^{k}_{i})+\bm{A}_{i}^{\top}\bm{y}^{k}}{\bm{x}_{i}-\xx^{k}_{i}}+\frac{1}{2}\norm{\bm{x}_{i}-\xx^{k}_{i}}^{2}_{\bm{Q}^{k}_{i}}\right\}\\
&=\prox^{\bm{Q}_{i}^{k}}_{r_{i}}\left(\bm{x}^{k}_{i}-(\bm{Q}^{k}_{i})^{-1}(\nabla\phi_{i}(\xx^{k}_{i})+\bm{A}^{\top}_{i}\bm{y}^{k})\right).
 \end{align*}
The next iterate $\xx^{k+1}$ is obtained by the block-coordinate update rule \eqref{eq:xupdate}. This gives line 3 of Algorithm \ref{alg:PDA}.

Next, observe that
\begin{equation}\label{eq:w-PDA}
\begin{split}
S_{k}(\bm{A}\bm{w}^{k+1}-\bm{b})&=S_{k}(\bm{A}(\bm{z}^{k}+\theta_{k}\bm{P}(\xx^{k+1}-\xx^{k})-\bm{b}))\\
&=\bm{y}^{k}+\theta_{k}S_{k}\bm{A}\bm{P}(\xx^{k+1}-\xx^{k})\\
&=\bm{y}^{k}+\sigma_{k}\bm{A}\bm{P}(\xx^{k+1}-\xx^{k})
\end{split}
\end{equation}
Hence, after introducing the residual $\bm{u}^{k}=\bm{A}\xx^{k}-\bm{b}$, satisfying 
\begin{equation}\label{eq:uactive}
\bm{u}^{k+1}-\bm{u}^{k}=\bm{A}(\xx^{k+1}-\xx^{k})=\sum_{i\in\iota_{k}}\bm{A}_{i}(\hat{\xx}^{k+1}_{i}-\xx^{k}_{i}),
\end{equation}

we obtain line 4 of Algorithm \ref{alg:PDA}, as well as 
\begin{align*}
\bm{y}^{k+1}&=S_{k+1}\left(\bm{A}(\bm{w}^{k+1}+\theta_{k+1}(\xx^{k+1}-\bm{w}^{k+1}))-\bm{b}\right)\\
&=S_{k+1}(\bm{A}\bm{w}^{k+1}-\bm{b})+\sigma_{k+1}(\bm{A}(\xx^{k+1}-\bm{w}^{k+1})\\
&=S_{k}(\bm{A}\bm{w}^{k+1}-\bm{b})+\sigma_{k+1}(\bm{A}\xx^{k+1}-\bm{b})\\
&=\bm{y}^{k}+\sigma_{k}\bm{A}\bm{P}(\xx^{k+1}-\xx^{k})+\sigma_{k+1}\bm{u}^{k+1},
\end{align*}
where we have used in the last step the identity \eqref{eq:w-PDA}. Thereby, we obtain line 5 of Algorithm \ref{alg:PDA}. This completes the verification that the sequence $\{(\xx^{k},\bm{w}^{k},\bm{z}^{k})\}_{k\geq 0}$ generated by Algorithm \ref{alg:RBCD} is equivalent to the just constructed sequence $\{(\xx^{k},\bm{u}^{k},\bm{y}^{k})\}_{k\geq 0}$ corresponding to the iterates of Algorithm \ref{alg:PDA}.

\begin{remark}
The implementation of Algorithm \ref{alg:PDA} is fully parallelisable, involving a computational architecture with $d$ agents and a single central coordinator. The agents manage the coordinate blocks $\xx_{i}$ in a fully decentralised way, using information about the centrally updated dual variable $\bm{y}^{k}$ only. A practical implementation of the computational scheme is as follows:
\begin{enumerate}
\item Given the current data $(\xx^{k},\bm{u}^{k},\bm{y}^{k})$ the coordinator realises a sampling $\iota_{k}$. 
\item All agents in $\iota_{k}$ receive the order to update their control variables in parallel, given their current position $\xx_{i}^{k}$, the data matrix $\bm{A}_{i}$ and the penalty of resource utilisation $\bm{y}^{k}$. 
\item Once all active agents executed their computation, they report to the vector $\bm{A}_{i}(\xx^{k+1}_{i}-\xx^{k}_{i})$ to the central coordinator.
\item The central coordinator updates the penalty parameter $\bm{y}^{k}$ by executing the dual update in line 5 of the Algorithm \ref{alg:PDA}
\end{enumerate}
Distributed primal-dual methods as the one described have received enormous attention in the control and machine learning community; see e.g  \cite{Boyd:2011tj,NecClip13,Bianchi:2015uz,Xu2018,BiaFer19,LatPatr19,Ryu2022}. 
\end{remark}

\begin{remark}
Consider the special case with $\pi_{i}=1/d$ and $d=1$, as well as $\sigma_{k}\equiv\sigma$. Then, Algorithm \ref{alg:PDA} coincides with the primal-dual method of Chambolle-Pock \cite{chambolle11}. In fact, in this case it follows that $\bm{u}^{k}=\bm{A}\xx^{k}-\bm{b}$, and $\bm{y}^{k+1}=\bm{y}^{k}+\sigma(\bm{A}(2\xx^{k+1}-\xx^{k})-b)$. 
\end{remark}

\section{Convergence Analysis}
\label{sec:convergence}
This section is concerned with the convergence properties of Algorithm \ref{alg:RBCD}. We start with a basic descent property of the primal forward-backward step. This will involve the identification of a Lyapunov function to obtain energy decay estimates in a variable metric. Building on this result, we investigate two important scenarios in isolation: First, we consider the merely convex case, which is obtained when $\Upsilon=0$. If $\Upsilon\succ 0$, then accelerated rates in the primal iterates can be obtained. \revise{This, however, requires the derivation of a suitable step-size policy that exploits strong convexity for boosting the performance of the method. Understanding the mechanics of this step-size regime involves a delicate analysis of the thus-constructed step-size policy, which is relegated to Appendix \ref{app:parameters}.}

\subsection{Lyapunov function and key estimates}
We start the analysis with a small extension of ''Property 1" stated in \cite{tseng08apgm} for Bregman proximal gradient algorithms. 
\begin{lemma}\label{lem:xi}
For all $k\geq 0$ and $\xx\in\R^{m}$, define 
\begin{equation}\label{eq:zeta}
\zeta^{k}(\xx)=\Phi(\xx^{k})+\inner{\nabla\Phi(\xx^{k})}{\xx-\xx^{k}}+S_{k}\inner{\nabla h(\bm{z}^{k})}{\xx-\bm{z}^{k}}+\frac{1}{2}\norm{\xx-\xx^{k}}^{2}_{\bm{Q}^{k}}
\end{equation}
Then, for all $\xx\in\R^{m}$ it holds true that 
\begin{equation}\label{eq:ProxInequality}
R(\hat{\xx}^{k+1})+\zeta^{k}(\hat{\xx}^{k+1})\leq R(\xx)+\zeta^{k}(\xx)-\frac{1}{2}\norm{\xx-\hat{\xx}^{k+1}}^{2}_{\bm{Q}^{k}+\Upsilon},
\end{equation}
\end{lemma}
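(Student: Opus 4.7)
The plan is to recognise \eqref{eq:ProxInequality} as the standard descent inequality at the minimiser of a strongly convex auxiliary functional, in direct analogy to Tseng's ``Property~1.''

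First, I would show that $\hat{\xx}^{k+1}=\arg\min_{\bm{u}\in\R^{m}}\{R(\bm{u})+\zeta^{k}(\bm{u})\}$. Both summands are block separable: $R(\bm{u})=\sum_{i}r_{i}(\bm{u}_{i})$ by assumption, and $\zeta^{k}(\bm{u})$ splits because $\nabla\Phi(\xx^{k})$ decomposes block-wise (with $\nabla_{i}\Phi(\xx^{k})=\nabla\phi_{i}(\xx^{k}_{i})$) and $\bm{Q}^{k}=\blkdiag(\bm{Q}^{k}_{1},\ldots,\bm{Q}^{k}_{d})$ is block diagonal. Hence the joint minimisation decouples into $d$ block problems, each of which is exactly the proximal problem \eqref{eq:FB} whose solution is $\FB^{k}_{i}(\xx^{k})$; matching the definition of $\hat{\xx}^{k+1}$ given after Algorithm~\ref{alg:PDA} identifies the joint minimiser.

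Second, I would verify strong convexity of this auxiliary functional. The quadratic $\zeta^{k}$ has Hessian (with respect to $\bm{u}$) equal to $\bm{Q}^{k}$, and $R$ is $\Upsilon$-strongly convex by aggregating \eqref{eq:rconvex} over blocks and using $\Upsilon=\blkdiag(\Upsilon_{1},\ldots,\Upsilon_{d})$; summing these two lower bounds shows that $R+\zeta^{k}$ is strongly convex with modulus $\bm{Q}^{k}+\Upsilon$ in the weighted-norm sense.

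Finally, I would invoke the textbook fact that, if $F$ is $M$-strongly convex on $\R^{m}$ with minimiser $\bm{u}^{\star}$, then $F(\xx)\geq F(\bm{u}^{\star})+\tfrac{1}{2}\norm{\xx-\bm{u}^{\star}}^{2}_{M}$ for every $\xx\in\R^{m}$. Specialising with $F=R+\zeta^{k}$, $\bm{u}^{\star}=\hat{\xx}^{k+1}$ and $M=\bm{Q}^{k}+\Upsilon$, and rearranging, yields \eqref{eq:ProxInequality}. I do not anticipate a genuine obstacle; the only point requiring attention is the block-separable identification of $\hat{\xx}^{k+1}$ as a joint minimiser in the first step, after which the descent inequality is an immediate consequence of strong convexity.
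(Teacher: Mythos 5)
Your proposal is correct and follows essentially the same route as the paper: the paper writes the optimality condition $0\in\partial r_{i}(\hat{\xx}^{k+1})+\nabla_{i}\zeta^{k}(\hat{\xx}^{k+1})$ blockwise and adds the $\Upsilon$-strong-convexity inequality for $R$ to the $\bm{Q}^{k}$-strong-convexity inequality for $\zeta^{k}$, which is exactly the quadratic-growth-at-the-minimiser argument you invoke for $F=R+\zeta^{k}$ with modulus $\bm{Q}^{k}+\Upsilon$. The only cosmetic difference is that $\zeta^{k}$ and the objective in \eqref{eq:FB} differ by a constant in $\xx$, so their minimisers coincide; this is implicit in both arguments.
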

\begin{proof}
Collect the forward-backward operator in coordinates $\FB^{k}(\xx):=[\FB^{k}_{1}(\xx_{1});\ldots;\FB^{k}_{d}(\xx_{d})]$, and set $\hat{\xx}^{k+1}=\FB^{k}(\xx^{k})$. From the definition of the forward-backward operator \eqref{eq:FB}, we see that 
$$
0\in \partial r_{i}(\hat{\xx}^{k+1})+\nabla_{i}\zeta_{k}(\hat{\xx}^{k+1})\qquad\forall i\in[d].
$$
Hence, for all $\xx\in\R^{m}$ and $i\in[d]$, we obtain from eq. \eqref{eq:rconvex}
$$
r_{i}(\xx_{i})\geq r_{i}(\hat{\xx}^{k+1}_{i})+\inner{-\nabla_{i}\zeta^{k}(\hat{\xx}^{k+1})}{\xx_{i}-\hat{\xx}^{k+1}_{i}}+\frac{1}{2}\norm{\xx_{i}-\hat{\xx}^{k+1}_{i}}^{2}_{\Upsilon_{i}}.
$$
Summing over all blocks $i\in[d]$, it follows
$$
R(\xx)\geq R(\hat{\xx}^{k+1})+\inner{-\nabla\zeta^{k}(\hat{\xx}^{k+1})}{\xx-\hat{\xx}^{k+1}}+\frac{1}{2}\norm{\xx-\hat{\xx}^{k+1}}^{2}_{\Upsilon},
$$
Furthermore, it is easy to see that $\xx\mapsto \zeta^{k}(\xx)$ is 1-strongly convex in the norm $\norm{\cdot}_{\bm{Q}^{k}}$. Hence, 
$$
\zeta^{k}(\xx)\geq \zeta^{k}(\hat{\xx}^{k+1})+\inner{\nabla\zeta^{k}(\hat{\xx}^{k+1})}{\xx-\hat{\xx}^{k+1}}+\frac{1}{2}\norm{\xx-\hat{\xx}^{k+1}}^{2}_{\bm{Q}^{k}}.
$$
Adding these two inequalities, and rearranging, gives the claimed result.
\end{proof}

Lemma \ref{lem:Ex1}, together with Lemma \ref{lem:Ex2}, shows that 
 \begin{align*}
\frac{1}{2}\norm{\hat{\xx}^{k+1}-\bm{w}^{k}}^{2}_{\bm{A}^{\top}\bm{A}}&= \frac{1}{2}\norm{\bm{A}(\hat{\xx}^{k+1}-\bm{w}^{k})}^{2}\\
 &\stackrel{\eqref{eq:Ex1}}{=}\frac{1}{2}\Ex_{k}\left[\norm{\bm{A}(\xx^{k}+\bm{P}(\xx^{k+1}-\xx^{k})-\bm{w}^{k})}^{2}\right]\\
&-\frac{1}{2}\Ex_{k}\left[\norm{\bm{A}(\bm{P}\bm{E}_{k}-\bm{I})(\hat{\xx}^{k+1}-\xx^{k})}^{2}\right]\\
&=\frac{1}{2}\Ex_{k}\left[\norm{\bm{A}(\xx^{k}+\bm{P}(\xx^{k+1}-\xx^{k})-\bm{w}^{k})}^{2}\right]-\frac{1}{2}\norm{\hat{\xx}^{k+1}-\xx^{k}}_{\Xi-\bm{A}^{\top}\bm{A}}
\end{align*}
Applying eq. \eqref{eq:centerd}, \eqref{eq:centerd2}, \eqref{eq:hwk} and \eqref{eq:hforward}, as well as the identity $\theta_{k}=\frac{\sigma_{k}}{S_{k}}$, the above becomes 
\begin{equation}\label{eq:energy}
\begin{split}
 \frac{1}{2}\norm{\bm{A}(\hat{\xx}^{k+1}-\bm{w}^{k})}^{2}&\overset{\eqref{eq:hwk}}{=}\frac{S_{k}}{\sigma_{k}}h(\bm{w}^{k})+\frac{S_{k}}{S_{k-1}}\Ex_{k}[h(\xx^{k}+\bm{P}(\xx^{k+1}-\xx^{k}))]\\
 &-\frac{S^{2}_{k}}{\sigma_{k}S_{k-1}}\Ex_{k}[h(\bm{w}^{k+1})]- \frac{1}{2}\norm{\hat{\xx}^{k+1}-\xx^{k}}^{2}_{\Xi-\bm{A}^{\top}\bm{A}}\\
 &\overset{\eqref{eq:hforward},\eqref{eq:centerd2}}{=}\frac{S_{k}}{\sigma_{k}}h(\bm{w}^{k})+\frac{S_{k}}{2S_{k-1}}\Ex_{k}[\norm{\bm{A}(\bm{P}\bm{E}_{k}-\bm{I})(\hat{\xx}^{k+1}-\xx^{k})}^{2}]\\
 &-\frac{S^{2}_{k}}{\sigma_{k}S_{k-1}}\Ex_{k}[h(\bm{w}^{k+1})]- \frac{1}{2}\norm{\hat{\xx}^{k+1}-\xx^{k}}^{2}_{\Xi-\bm{A}^{\top}\bm{A}}+\frac{S_{k}}{S_{k-1}}h(\hat{\xx}^{k+1})\\
& \overset{\eqref{eq:Ex2}}{=}\frac{S_{k}}{\sigma_{k}}h(\bm{w}^{k})+\frac{\sigma_{k}}{2S_{k-1}}\norm{\hat{\xx}^{k+1}-\xx^{k}}^{2}_{\Xi-\bm{A}^{\top}\bm{A}}- \frac{S^{2}_{k}}{\sigma_{k}S_{k-1}}\Ex_{k}[h(\bm{w}^{k+1})]\\
&+\frac{S_{k}}{S_{k-1}}h(\hat{\xx}^{k+1}).
 \end{split}
 \end{equation}
Finally, we need a characterisation of the sequence $(\bm{w}^{k})_{k\geq 0}$, which is quite standard in the analysis of \revise{randomised block-coordinate methods} \cite{fercoq15,QuRic16I,QuRic16II}. We therefore skip the straightforward proof. 
 
 \begin{lemma}\label{lem:average}
 Let $(\xx^{k},\bm{w}^{k})_{k\geq 0}$ be the iterates of Algorithm \ref{alg:RBCD}. Then, for all $k\geq 1$, we have 
 \begin{equation}\label{eq:waverage}
 \bm{w}^{k}_{i}=\sum_{t=0}^{k}\gamma_{i}^{k,t}\xx^{t}_{i},
 \end{equation}
 where for each $i\in[d]$, the coefficients $(\gamma_{i}^{k,t})_{t=0}^{k}$ are defined recursively by setting $\gamma_{i}^{0,0}=1,\gamma_{i}^{1,0}=1-\frac{\theta_{0}}{\pi_{i}},\gamma_{i}^{1,1}=\frac{\theta_{0}}{\pi_{i}}$, and for all $k\geq 1$ 
\begin{equation}\label{eq:gamma}
\gamma^{k+1,t}_{i}:=\left\{\begin{array}{ll} 
(1-\theta_{k})\gamma_{i}^{k,t} & \text{ if }t=0,1,\ldots,k-1,\\
(1-\theta_{k})\gamma_{i}^{k,k}+\theta_{k}(1-\pi_{i}^{-1}) & \text{ if }t=k,\\
\theta_{k}/\pi_{i} & \text{if }t=k+1.
\end{array}
\right.
\end{equation} 
Moreover, for all $k\geq 0$, the following identity holds
\begin{equation}\label{eq:Gammak,k+1}
\gamma_{i}^{k+1,k}+\gamma^{k+1,k+1}_{i}=\theta_{k}+(1-\theta_{k})\gamma_{i}^{k,k}.
\end{equation}
Moreover, if $\theta_{0}\in(0,\min_{i\in[d]}\pi_{i}]$ and $(\theta_{k})_{k\geq 0}$ is a decreasing sequence, then for all $k\geq 0$ and $i\in[d]$, the coefficients $(\gamma_{i}^{k,t})_{t=0}^{k}$ are all positive and add up to 1. 
 \end{lemma}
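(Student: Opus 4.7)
\medskip

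The plan is to prove the three assertions one after the other, all by straightforward induction on $k$. For the representation \eqref{eq:waverage} and the recursion \eqref{eq:gamma}, I will unfold the defining equations \eqref{eq:z} and \eqref{eq:w} coordinate-wise: since $\bm{z}^{k}=(1-\theta_{k})\bm{w}^{k}+\theta_{k}\xx^{k}$ and $\bm{w}^{k+1}=\bm{z}^{k}+\theta_{k}\bm{P}(\xx^{k+1}-\xx^{k})$, the identity $\bm{P}_{ii}=\pi_{i}^{-1}\identity_{m_{i}}$ yields
\begin{equation*}
\bm{w}^{k+1}_{i}=(1-\theta_{k})\bm{w}^{k}_{i}+\theta_{k}\bigl(1-\pi_{i}^{-1}\bigr)\xx^{k}_{i}+\tfrac{\theta_{k}}{\pi_{i}}\xx^{k+1}_{i}.
\end{equation*}
For the base case $k=1$, I use $\bm{w}^{0}=\xx^{0}$ from the initialisation of Algorithm \ref{alg:RBCD}, which gives $\bm{z}^{0}=\xx^{0}$ and hence $\bm{w}^{1}_{i}=(1-\theta_{0}/\pi_{i})\xx^{0}_{i}+(\theta_{0}/\pi_{i})\xx^{1}_{i}$, matching the initialisation of the coefficients. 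The inductive step then follows by substituting $\bm{w}^{k}_{i}=\sum_{t=0}^{k}\gamma_{i}^{k,t}\xx^{t}_{i}$ into the displayed recursion, collecting terms according to the index $t$, and reading off \eqref{eq:gamma}.

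For the identity \eqref{eq:Gammak,k+1}, I just add the two rules for $t=k$ and $t=k+1$ in \eqref{eq:gamma} and cancel the terms $\pm\theta_{k}/\pi_{i}$.

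The final claim requires checking both positivity and $\sum_{t=0}^{k}\gamma_{i}^{k,t}=1$, again by induction on $k$. The sum-to-one property is immediate: assuming $\sum_{t=0}^{k}\gamma_{i}^{k,t}=1$, the recursion \eqref{eq:gamma} gives
\begin{equation*}
\sum_{t=0}^{k+1}\gamma_{i}^{k+1,t}=(1-\theta_{k})\sum_{t=0}^{k}\gamma_{i}^{k,t}+\theta_{k}\bigl(1-\pi_{i}^{-1}\bigr)+\tfrac{\theta_{k}}{\pi_{i}}=1.
\end{equation*}
For positivity, the only coefficients which are not manifestly non-negative are $\gamma_{i}^{k+1,k}=(1-\theta_{k})\gamma_{i}^{k,k}+\theta_{k}(1-\pi_{i}^{-1})$, which can be negative because $1-\pi_{i}^{-1}<0$. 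The main (and only) obstacle is controlling this middle coefficient. Here I plan to exploit the fact that the recursion forces $\gamma_{i}^{k,k}=\theta_{k-1}/\pi_{i}$ for every $k\geq 1$, which I will verify inductively as a by-product. Substituting this value and using that $(\theta_{k})$ is non-increasing with $\theta_{0}\leq\min_{j}\pi_{j}$, so that $\theta_{k-1}\leq\pi_{i}$ for every $i\in[d]$, reduces positivity to the elementary inequality
\begin{equation*}
\theta_{k-1}-\theta_{k}\;\geq\;\theta_{k}(\theta_{k-1}-\pi_{i}),
\end{equation*}
whose left-hand side is non-negative and whose right-hand side is non-positive. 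The remaining coefficients are either $(1-\theta_{k})$ times positive quantities, hence positive since $\theta_{k}\leq\theta_{0}\leq\min_{j}\pi_{j}<1$, or equal to $\theta_{k}/\pi_{i}>0$, completing the induction.
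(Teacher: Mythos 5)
Your proof is correct and is exactly the straightforward induction that the paper alludes to when it declares the proof of this lemma ``standard'' and omits it: unfolding \eqref{eq:z}--\eqref{eq:w} coordinate-wise, reading off the recursion \eqref{eq:gamma}, and using $\gamma_{i}^{k,k}=\theta_{k-1}/\pi_{i}$ together with $\theta_{k}\leq\theta_{0}\leq\pi_{i}$ to control the only delicate coefficient $\gamma_{i}^{k+1,k}$. The single quibble is that your inequality $\theta_{k-1}-\theta_{k}\geq\theta_{k}(\theta_{k-1}-\pi_{i})$ only yields $\gamma_{i}^{k+1,k}\geq 0$ in the boundary case where $\theta_{k}=\theta_{k-1}=\pi_{i}=\min_{j}\pi_{j}$; strict positivity as literally claimed requires either strict decrease of $(\theta_{k})$ or $\theta_{0}<\min_{j}\pi_{j}$, but non-negativity plus the sum-to-one property is all that is ever used downstream.
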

 Let $\hat{\xx}^{k+1}=\FB_{k}(\xx^{k})$. Using \eqref{eq:phiconvex}, we see 
 \begin{align*}
 \Psi(\hat{\xx}^{k+1})&=\Phi(\hat{\xx}^{k+1})+R(\hat{\xx}^{k+1})\\
 &\leq R(\hat{\xx}^{k+1})+\Phi(\xx^{k})+\inner{\nabla \Phi(\xx^{k})}{\hat{\xx}^{k+1}-\xx^{k}}+\frac{1}{2}\norm{\hat{\xx}^{k+1}-\xx^{k}}^{2}_{\Lambda}
 \end{align*}
 Using \eqref{eq:zeta}, we can continue with the estimate
 \begin{align*}
 \Phi(\xx^{k})+\inner{\nabla\Phi(\xx^{k})}{\hat{\xx}^{k+1}-\xx^{k}}=\zeta^{k}(\hat{\xx}^{k+1})-\frac{1}{2}\norm{\hat{\xx}^{k+1}-\xx^{k}}^{2}_{\bm{Q}^{k}}-S_{k}\inner{\nabla h(\bm{z}^{k})}{\hat{\xx}^{k+1}-\bm{z}^{k}}.
 \end{align*}
 Consequently, for $\xx^{\ast}\in\mathcal{X}=\arg\min_{\xx}h(\xx)$ as a reference point, the following bounds are obtained:
 \begin{align*}
 \Psi(\hat{\xx}^{k+1})&\leq R(\hat{\xx}^{k+1})+\zeta^{k}(\hat{\xx}^{k+1})-S_{k}\inner{\nabla h(\bm{z}^{k})}{\hat{\xx}^{k+1}-\bm{z}^{k}}-\frac{1}{2}\norm{\hat{\xx}^{k+1}-\xx^{k}}^{2}_{\bm{Q}^{k}-\Lambda}\\
 &\overset{\eqref{eq:ProxInequality}}{\leq }R(\xx^{*})+\zeta^{k}(\xx^{*})-\frac{1}{2}\norm{\hat{\xx}^{k+1}-\xx^{*}}^{2}_{\bm{Q}^{k}}-S_{k}\inner{\nabla h(\bm{z}^{k})}{\hat{\xx}^{k+1}-\bm{z}^{k}}\\
 &-\frac{1}{2}\norm{\hat{\xx}^{k+1}-\xx^{k}}^{2}_{\bm{Q}^{k}-\Lambda}\\
 &=\Psi(\xx^{*})-\Phi(\xx^{*})+\zeta^{k}(\xx^{*})-S_{k}\inner{\nabla h(\bm{z}^{k})}{\hat{\xx}^{k+1}-\bm{z}^{k}}\\
 &-\frac{1}{2}\norm{\hat{\xx}^{k+1}-\xx^{*}}^{2}_{\bm{Q}^{k}}-\frac{1}{2}\norm{\hat{\xx}^{k+1}-\xx^{k}}^{2}_{\bm{Q}^{k}-\Lambda}\\
 &\overset{\eqref{eq:zeta}}{=}\Psi(\xx^{*})-\Phi(\xx^{*})+\Phi(\xx^{k})+\inner{\nabla\Phi(\xx^{k})}{\xx^{*}-\xx^{k}}+S_{k}\inner{\nabla h(\bm{z}^{k})}{\xx^{*}-\hat{\xx}^{k+1}}\\
 &+\frac{1}{2}\norm{\xx^{k}-\xx^{*}}^{2}_{\bm{Q}^{k}}-\frac{1}{2}\norm{\hat{\xx}^{k+1}-\xx^{*}}^{2}_{\bm{Q}^{k}+\Upsilon}-\frac{1}{2}\norm{\hat{\xx}^{k+1}-\xx^{k}}^{2}_{\bm{Q}^{k}-\Lambda}\\
  &\overset{\eqref{eq:phiconvex}}{=}\Psi(\xx^{*})+S_{k}\inner{\nabla h(\bm{z}^{k})}{\xx^{*}-\hat{\xx}^{k+1}}+\frac{1}{2}\norm{\xx^{k}-\xx^{*}}^{2}_{\bm{Q}^{k}}-\frac{1}{2}\norm{\hat{\xx}^{k+1}-\xx^{*}}^{2}_{\bm{Q}^{k}+\Upsilon}\\
  &-\frac{1}{2}\norm{\hat{\xx}^{k+1}-\xx^{k}}^{2}_{\bm{Q}^{k}-\Lambda}.
\end{align*}
Via eq. \eqref{eq:hzk}, we obtain 
$$
S_{k}\inner{\nabla h(\bm{z}^{k})}{\xx^{*}-\hat{\xx}^{k+1}}=S_{k-1}\inner{\nabla h(\bm{w}^{k})}{\xx^{*}-\hat{\xx}^{k+1}}+\sigma_{k}\inner{\nabla h(\xx^{k})}{\xx^{*}-\hat{\xx}^{k+1}}.
$$
Next, we apply identity \eqref{eq:3pointh} to each inner product separately, in order to conclude
\begin{align*}
\inner{\nabla h(\bm{w}^{k})}{\xx^{*}-\hat{\xx}^{k+1}}&=h(\xx^{\ast})-h(\hat{\xx}^{k+1})-\frac{1}{2}\norm{\bm{A}(\xx^{*}-\bm{w}^{k})}^{2}+\frac{1}{2}\norm{\bm{A}(\hat{\xx}^{k+1}-\bm{w}^{k})}^{2},\\
\inner{\nabla h(\xx^{k})}{\xx^{*}-\hat{\xx}^{k+1}}&=h(\xx^{\ast})-h(\hat{\xx}^{k+1})-\frac{1}{2}\norm{\bm{A}(\xx^{*}-\xx^{k})}^{2}+\frac{1}{2}\norm{\bm{A}(\hat{\xx}^{k+1}-\xx^{k})}^{2}.
\end{align*}
Combined with the previous display, this shows for $\xx^{\ast}\in\arg\min_{\xx\in\R^{m}}h(\xx)$, 
\begin{align*}
 \Psi(\hat{\xx}^{k+1})-\Psi(\xx^{*})&\leq -S_{k}[h(\hat{\xx}^{k+1})-h(\xx^{*})]-\frac{S_{k-1}}{2}\norm{\bm{A}(\xx^{*}-\bm{w}^{k})}^{2}+\frac{S_{k-1}}{2}\norm{\bm{A}(\hat{\xx}^{k+1}-\bm{w}^{k})}^{2}\\
 &-\frac{\sigma_{k}}{2}\norm{\bm{A}(\xx^{*}-\xx^{k})}^{2}+\frac{\sigma_{k}}{2}\norm{\bm{A}(\hat{\xx}^{k+1}-\xx^{k})}^{2}\\
 &+\frac{1}{2}\norm{\xx^{k}-\xx^{*}}^{2}_{\bm{Q}^{k}}-\frac{1}{2}\norm{\hat{\xx}^{k+1}-\xx^{*}}^{2}_{\bm{Q}^{k}+\Upsilon}-\frac{1}{2}\norm{\hat{\xx}^{k+1}-\xx^{k}}^{2}_{\bm{Q}^{k}-\Lambda}\\
 &=-\left(S_{k}h(\hat{\xx}^{k+1})+S_{k-1}h(\bm{w}^{k})+\sigma_{k}h(\xx^{k})-2S_{k}h(\xx^{*})\right)+\frac{S_{k-1}}{2}\norm{\bm{A}(\hat{\xx}^{k+1}-\bm{w}^{k})}^{2}\\
&+\frac{1}{2}\norm{\xx^{k}-\xx^{*}}^{2}_{\bm{Q}^{k}}-\frac{1}{2}\norm{\hat{\xx}^{k+1}-\xx^{*}}^{2}_{\bm{Q}^{k}+\Upsilon}-\frac{1}{2}\norm{\hat{\xx}^{k+1}-\xx^{k}}^{2}_{\bm{Q}^{k}-\Lambda-\sigma_{k}\bm{A}^{\top}\bm{A}}.
\end{align*}
Finally, applying identity \eqref{eq:energy}, one sees 
\begin{align*}
&-\left(S_{k}h(\hat{\xx}^{k+1})+S_{k-1}h(\bm{w}^{k})+\sigma_{k}h(\xx^{k})-2S_{k}h(\xx^{*})\right)+\frac{S_{k-1}}{2}\norm{\bm{A}(\hat{\xx}^{k+1}-\bm{w}^{k})}^{2}\\
&=-\left(S_{k}h(\hat{\xx}^{k+1})+S_{k-1}h(\bm{w}^{k})+\sigma_{k}h(\xx^{k})-2S_{k}h(\xx^{*})\right)\\
&+S_{k-1}\left(\frac{S_{k}}{\sigma_{k}}h(\bm{w}^{k})+\frac{\sigma_{k}}{2S_{k-1}}\norm{\hat{\xx}^{k+1}-\xx^{k}}^{2}_{\Xi-\bm{A}^{\top}\bm{A}}- \frac{S^{2}_{k}}{\sigma_{k}S_{k-1}}\Ex_{k}[h(\bm{w}^{k+1})]+\frac{S_{k}}{S_{k-1}}h(\hat{\xx}^{k+1})\right)\\
&=\frac{S^{2}_{k-1}}{\sigma_{k}}h(\bm{w}^{k})-\frac{S^{2}_{k}}{\sigma_{k}}\Ex_{k}[h(\bm{w}^{k+1})]+2S_{k}h(\xx^{*})-\sigma_{k}h(\xx^{k})+\frac{\sigma_{k}}{2}\norm{\hat{\xx}^{k+1}-\xx^{k}}^{2}_{\Xi-\bm{A}^{\top}\bm{A}}\\
&=\frac{S^{2}_{k-1}}{\sigma_{k}}(h(\bm{w}^{k})-h(\xx^{*}))-\frac{S^{2}_{k}}{\sigma_{k}}\Ex_{k}[h(\bm{w}^{k+1})-h(\xx^{*})]-\sigma_{k}(h(\xx^{k})-h(\xx^{*}))+\frac{\sigma_{k}}{2}\norm{\hat{\xx}^{k+1}-\xx^{k}}^{2}_{\Xi-\bm{A}^{\top}\bm{A}},
\end{align*}
where the last step uses the identity $2S_{k}=\frac{S^{2}_{k}}{\sigma_{k}}-\frac{S^{2}_{k-1}}{\sigma_{k}}+\sigma_{k}$. Plugging this expression into the penultimate display, we arrive that 
\begin{equation}\label{eq:finalpsi}
\begin{split}
 \Psi(\hat{\xx}^{k+1})-\Psi(\xx^{*})&\leq \frac{S^{2}_{k-1}}{\sigma_{k}}(h(\bm{w}^{k})-h(\xx^{*}))-\frac{S^{2}_{k}}{\sigma_{k}}\Ex_{k}[h(\bm{w}^{k+1})-h(\xx^{*})]-\sigma_{k}(h(\xx^{k})-h(\xx^{*}))\\
 &+\frac{1}{2}\norm{\xx^{k}-\xx^{*}}^{2}_{\bm{Q}^{k}}-\frac{1}{2}\norm{\hat{\xx}^{k+1}-\xx^{*}}^{2}_{\bm{Q}^{k}+\Upsilon}-\frac{1}{2}\norm{\hat{\xx}^{k+1}-\xx^{k}}^{2}_{\bm{Q}^{k}-\Lambda-\sigma_{k}\Xi}.
 \end{split}
 \end{equation}
 Define the vector-valued function $\vec{\psi}(\xx):=[\phi_{1}(\xx_{1})+r_{1}(\xx_{1}),\ldots,\phi_{d}(\xx_{d})+r_{d}(\xx_{d})]^{\top}\in\R^{d}$. Let 
$$
\hat{\Psi}_{k}:=\sum_{t=0}^{k}\sum_{i=1}^{d}\gamma_{i}^{k,t}\psi_{i}(\xx^{t}_{i})=\sum_{t=0}^{k}\gamma^{k,t}\cdot \vec{\psi}(\xx^{t}),
$$
 where $\gamma^{k,t}\cdot \vec{\psi}(\xx^{t}):=\sum_{i=1}^{d}\gamma_{i}^{k,t}\psi_{i}(\xx^{t}_{i})$. Thanks to Lemma \ref{lem:average}, we have $\hat{\Psi}_{k}\geq \Psi(\bm{w}^{k})$ for all $k\geq 0$. 
 \begin{lemma}
 Let $\bm{M}:=\blkdiag[\bm{M}_{1};\ldots;\bm{M}_{d}]$ with $\bm{M}_{i}\in\symm^{m_{i}}_{++}$. We have 
 \begin{align}
 \Ex_{k}[\norm{\xx^{k+1}-\xx^{*}}^{2}_{\bm{P}\bm{M}}]&=\norm{\hat{\xx}^{k+1}-\xx^{*}}^{2}_{\bm{M}}+\norm{\xx^{k}-\xx^{*}}^{2}_{(\bm{P}-\bm{I})\bm{M}} \label{eq:normx},\\
 \Ex_{k}[\hat{\Psi}_{k+1}]&=(1-\theta_{k})\hat{\Psi}_{k}+\theta_{k}\Psi(\hat{\xx}^{k+1}).\label{eq:psi}
 \end{align}
 \end{lemma}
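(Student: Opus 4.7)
The plan is to exploit the block-separability of the coordinate update rule
$$
\xx^{k+1}_{i}=\begin{cases} \hat{\xx}^{k+1}_{i} & \text{if }i\in\iota_{k},\\ \xx^{k}_{i} & \text{otherwise,}\end{cases}
$$
together with the fact that $\hat{\xx}^{k+1}=\FB^{k}(\xx^{k})$ is $\filtrationk{k}$-measurable (hence deterministic given the information available at iteration $k$) and $\Prob(i\in\iota_{k}\mid\filtrationk{k})=\pi_{i}$. Both identities will follow from a block-by-block conditional expectation computation.

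For \eqref{eq:normx}, since $\bm{P}$ and $\bm{M}$ are block-diagonal, $\bm{P}\bm{M}=\blkdiag[\pi_{1}^{-1}\bm{M}_{1};\ldots;\pi_{d}^{-1}\bm{M}_{d}]$, so I would write
$$
\Ex_{k}\bigl[\norm{\xx^{k+1}-\xx^{\ast}}^{2}_{\bm{P}\bm{M}}\bigr]=\sum_{i=1}^{d}\frac{1}{\pi_{i}}\Ex_{k}\bigl[\norm{\xx^{k+1}_{i}-\xx^{\ast}_{i}}^{2}_{\bm{M}_{i}}\bigr].
$$
Conditioning on whether $i\in\iota_{k}$ gives $\Ex_{k}[\norm{\xx^{k+1}_{i}-\xx^{\ast}_{i}}^{2}_{\bm{M}_{i}}]=\pi_{i}\norm{\hat{\xx}^{k+1}_{i}-\xx^{\ast}_{i}}^{2}_{\bm{M}_{i}}+(1-\pi_{i})\norm{\xx^{k}_{i}-\xx^{\ast}_{i}}^{2}_{\bm{M}_{i}}$. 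Dividing by $\pi_{i}$ and summing over $i$ produces the weighted norms $\norm{\hat{\xx}^{k+1}-\xx^{\ast}}^{2}_{\bm{M}}$ and $\norm{\xx^{k}-\xx^{\ast}}^{2}_{(\bm{P}-\bm{I})\bm{M}}$, since $\frac{1-\pi_{i}}{\pi_{i}}=\frac{1}{\pi_{i}}-1$ is exactly the $i$-th diagonal block of $\bm{P}-\bm{I}$.

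For \eqref{eq:psi}, I would split the sum defining $\hat{\Psi}_{k+1}$ according to the three cases in the recursion \eqref{eq:gamma}. This yields
$$
\hat{\Psi}_{k+1}=(1-\theta_{k})\hat{\Psi}_{k}+\theta_{k}\sum_{i=1}^{d}\Bigl[(1-\pi_{i}^{-1})\psi_{i}(\xx^{k}_{i})+\pi_{i}^{-1}\psi_{i}(\xx^{k+1}_{i})\Bigr],
$$
where the telescoping of the factors $(1-\theta_{k})\gamma_{i}^{k,t}$ for $t\leq k-1$ together with $(1-\theta_{k})\gamma_{i}^{k,k}$ reconstitutes the factor $(1-\theta_{k})\hat{\Psi}_{k}$. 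The key observation is then the conditional cancellation: since $\xx^{k+1}_{i}$ equals $\hat{\xx}^{k+1}_{i}$ with probability $\pi_{i}$ and $\xx^{k}_{i}$ otherwise,
$$
\Ex_{k}\bigl[\pi_{i}^{-1}\psi_{i}(\xx^{k+1}_{i})\bigr]=\psi_{i}(\hat{\xx}^{k+1}_{i})+(\pi_{i}^{-1}-1)\psi_{i}(\xx^{k}_{i}),
$$
so the $(1-\pi_{i}^{-1})\psi_{i}(\xx^{k}_{i})$ contribution disappears on taking $\Ex_{k}$ and only $\sum_{i}\psi_{i}(\hat{\xx}^{k+1}_{i})=\Psi(\hat{\xx}^{k+1})$ survives, giving the stated formula.

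The proofs are essentially bookkeeping; the only mildly delicate point is checking that the coefficients in the recursion \eqref{eq:gamma} are arranged precisely so that the $(1-\pi_{i}^{-1})$ factors cancel against the $\pi_{i}^{-1}-1$ arising from the sampling. Since this is already built into the definition of $\gamma_{i}^{k+1,k}$ in \eqref{eq:gamma}, the argument goes through without further subtleties.
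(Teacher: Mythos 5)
Your proof is correct and follows essentially the same route as the paper: identity \eqref{eq:normx} via the block-by-block conditioning $\Ex_{k}[\norm{\xx^{k+1}_{i}-\xx^{\ast}_{i}}^{2}_{\bm{M}_{i}}]=\pi_{i}\norm{\hat{\xx}^{k+1}_{i}-\xx^{\ast}_{i}}^{2}_{\bm{M}_{i}}+(1-\pi_{i})\norm{\xx^{k}_{i}-\xx^{\ast}_{i}}^{2}_{\bm{M}_{i}}$, and identity \eqref{eq:psi} by unfolding the recursion \eqref{eq:gamma} to get $\hat{\Psi}_{k+1}=(1-\theta_{k})\hat{\Psi}_{k}+\theta_{k}\bigl((1-\pi^{-1})\cdot\vec{\psi}(\xx^{k})+\pi^{-1}\cdot\vec{\psi}(\xx^{k+1})\bigr)$ and then cancelling the $(1-\pi_{i}^{-1})\psi_{i}(\xx^{k}_{i})$ terms after taking $\Ex_{k}$. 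No gaps.
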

 \begin{proof}
Eq. \eqref{eq:normx} can be easily seen by observing 
\begin{align*}
\Ex_{k}[\norm{\xx^{k+1}_{i}-\xx^{*}_{i}}^{2}_{\frac{1}{\pi_{i}}\bm{M}_{i}}]&=\pi_{i}\norm{\hat{\xx}^{k+1}_{i}-\xx^{*}_{i}}^{2}_{\frac{1}{\pi_{i}}\bm{M}_{i}}+(1-\pi_{i})\norm{\xx^{k}_{i}-\xx^{\ast}_{i}}^{2}_{\frac{1}{\pi_{i}}\bm{M}_{i}}\\
&=\norm{\hat{\xx}^{k+1}_{i}-\xx^{*}_{i}}^{2}_{\bm{M}_{i}}+(\pi^{-1}_{i}-1)\norm{\xx^{k}_{i}-\xx^{*}_{i}}^{2}_{\bm{M}_{i}}.
\end{align*}
Summing over all $i\in\{1,\ldots,d\}$ gives the result. To prove \eqref{eq:psi}, we first observe 
$$
\Ex_{k}[\psi_{i}(\xx^{k+1}_{i})]=\pi_{i}\psi_{i}(\hat{\xx}^{k+1}_{i})+(1-\pi_{i})\psi_{i}(\xx^{k}_{i}),
$$
so that $\Ex_{k}[\vec{\psi}(\xx^{k+1})]=\bm{P}^{-1}\vec{\psi}(\hat{\xx}^{k+1})+(\bm{I}-\bm{P}^{-1})\vec{\psi}(\xx^{k})$. It follows 
\begin{align*}
\hat{\Psi}_{k+1}&=\sum_{t=0}^{k-1}\gamma^{k+1,t}\cdot \vec{\psi}(\xx^{t})+\gamma^{k+1,k}\cdot\vec{\psi}(\xx^{k})+\gamma^{k+1,k+1}\cdot\vec{\psi}(\xx^{k+1})\\
&=\sum_{t=0}^{k-1}(1-\theta_{k})\gamma^{k,t}\cdot\vec{\psi}(\xx^{t})+\left((1-\theta_{k})\gamma^{k,k}+\theta_{k}(1-\pi^{-1})\right)\cdot\vec{r}(\xx^{k})+\theta_{k}\pi^{-1}\cdot\vec{\psi}(\xx^{k+1})\\
&=(1-\theta_{k})\hat{\Psi}_{k}+\theta_{k}\left((1-\pi^{-1})\cdot\vec{\psi}(\xx^{k})+\pi^{-1}\cdot\vec{\psi}(\xx^{k+1})\right).
\end{align*}
Taking conditional expectations on both sides, it follows 
\begin{align*}
\Ex_{k}[\hat{\Psi}_{k+1}]&=(1-\theta_{k})\hat{\Psi}_{k}+\theta_{k}\left((1-\pi^{-1})\cdot\vec{\psi}(\xx^{k})+\Psi(\hat{\xx}^{k+1})+(\pi^{-1}-1)\cdot\vec{\psi}(\xx^{k})\right)\\
&=(1-\theta_{k})\hat{\Psi}_{k}+\theta_{k}\Psi(\hat{\xx}^{k+1}).
\end{align*}
\end{proof}
The next result characterises a suitable Lyapunov function in an adapted variable metric. We set $\bm{B}:=\blkdiag[\bm{B}_{1};\ldots;\bm{B}_{d}]$. 
\begin{lemma}\label{lem:lyapunovRS}
Assume that $\theta_{0}\in(0,\min_{i\in[d]}\pi_{i}]$, and that the sequences $(\sigma_{k})_{k\geq 0},(\tau_{k})_{k\geq 0}$ satisfy the matrix inequalities
 \begin{align}\label{eq:MI1}
&\bm{P}\bm{B}\succeq\tau_{k}(\Lambda+\sigma_{k}\Xi),\\
&\bm{P}^{2}\bm{B}+\tau_{k}\bm{P}\Upsilon\succeq\frac{\tau_{k}\sigma_{k+1}}{\tau_{k+1}\sigma_{k}}(\bm{P}^{2}\bm{B}-\tau_{k+1}(\bm{I}-\bm{P})\Upsilon).
\label{eq:MI2}
\end{align}
Define the matrix-valued sequence $(\bm{W}_{k})_{k\geq 0}\subset\symm^{m}_{++}$ by $\bm{W}_{k}=\frac{\sigma_{k}}{\tau_{k}}\bm{P}^{2}\bm{B}+\sigma_{k}(\bm{P}-\bm{I})\Upsilon$, and introduce the functions 
 \begin{align}
 &F_{k}:=\hat{\Psi}_{k}+S_{k-1}(h(\bm{w}^{k})-h(\xx^{*})),\label{eq:Fk}\text{ and }\\
 &V_{k}(\xx):=\frac{1}{2}\norm{\xx^{k}-\xx}^{2}_{\bm{W}_{k}}+S_{k-1}(F_{k}-\Psi(\xx)),
\label{eq:Vk}
 \end{align}
We have for all $\xx^{*}\in\X$, 
\begin{equation}\label{eq:martingale}
 \Ex_{k}\left[V_{k+1}(\xx^{*})\right]\leq V_{k}(\xx^{*})-\sigma^{2}_{k}(h(\xx^{k})-h(\xx^{*}))-\frac{1}{2}\norm{\hat{\xx}^{k+1}-\xx^{k}}^{2}_{\frac{\sigma_{k}}{\tau_{k}}(\bm{P}\bm{B}-\tau_{k}(\sigma_{k}\Xi+\Lambda))}.
\end{equation}
\end{lemma}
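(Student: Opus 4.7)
The plan is to start from the one-step primal descent estimate \eqref{eq:finalpsi}, multiply it by $\sigma_k$, and then engineer the left-hand side so that it reads as a telescoping difference of the Lyapunov function $V_k$. The two matrix inequalities \eqref{eq:MI1}--\eqref{eq:MI2} will end up being exactly what is needed to make the residual error terms have the correct sign.

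\textbf{Step 1: Rewriting the primal gap in terms of $F_k$.} Multiplying \eqref{eq:finalpsi} by $\sigma_k$ and using $\sigma_k = \theta_k S_k$, the first two $h$-terms on the right-hand side become $S_{k-1}^2(h(\bm{w}^k)-h(\xx^\ast)) - S_k^2\Ex_k[h(\bm{w}^{k+1})-h(\xx^\ast)]$, while on the left the identity \eqref{eq:psi} multiplied by $S_k$ yields $\sigma_k\Psi(\hat{\xx}^{k+1}) = S_k\Ex_k[\hat{\Psi}_{k+1}] - S_{k-1}\hat{\Psi}_k$. Combining these, the left-hand side becomes exactly $S_k\Ex_k[F_{k+1}] - S_{k-1}F_k - \sigma_k\Psi(\xx^\ast)$, which is what one needs to telescope with the Lyapunov function $V_k$.

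\textbf{Step 2: Handling the $\xx$-distance terms via \eqref{eq:normx}.} On the right-hand side of the multiplied \eqref{eq:finalpsi}, the term $-\tfrac{\sigma_k}{2}\norm{\hat{\xx}^{k+1}-\xx^\ast}^2_{\bm{Q}^k+\Upsilon}$ must be converted into something involving $\xx^{k+1}$. Applying \eqref{eq:normx} with $\bm{M} = \sigma_k(\bm{Q}^k+\Upsilon)$ gives
\[
-\sigma_k\norm{\hat{\xx}^{k+1}-\xx^\ast}^2_{\bm{Q}^k+\Upsilon} = -\Ex_k[\norm{\xx^{k+1}-\xx^\ast}^2_{\sigma_k\bm{P}(\bm{Q}^k+\Upsilon)}] + \sigma_k\norm{\xx^k-\xx^\ast}^2_{(\bm{P}-\bm{I})(\bm{Q}^k+\Upsilon)}.
\]
Together with the already present $+\tfrac{\sigma_k}{2}\norm{\xx^k-\xx^\ast}^2_{\bm{Q}^k}$ and the $-\tfrac{1}{2}\norm{\xx^k-\xx^\ast}^2_{\bm{W}_k}$ coming from $V_k$, the coefficient of $\norm{\xx^k-\xx^\ast}^2$ becomes $-\bm{W}_k + \sigma_k\bm{Q}^k + \sigma_k(\bm{P}-\bm{I})(\bm{Q}^k+\Upsilon) = -\bm{W}_k + \sigma_k\bm{P}\bm{Q}^k + \sigma_k(\bm{P}-\bm{I})\Upsilon$, which, using $\bm{Q}^k = \tfrac{1}{\tau_k}\bm{P}\bm{B}$ and the definition of $\bm{W}_k$, cancels to zero exactly.

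\textbf{Step 3: Identifying the role of \eqref{eq:MI1}--\eqref{eq:MI2}.} The remaining residual in $\xx^{k+1}$ is a term of the form $\tfrac{1}{2}\Ex_k[\norm{\xx^{k+1}-\xx^\ast}^2_{\bm{W}_{k+1}-\sigma_k\bm{P}(\bm{Q}^k+\Upsilon)}]$. Multiplying \eqref{eq:MI2} by $\sigma_k/\tau_k$ and rearranging shows that $\sigma_k\bm{P}(\bm{Q}^k+\Upsilon) = \tfrac{\sigma_k}{\tau_k}\bm{P}^2\bm{B} + \sigma_k\bm{P}\Upsilon \succeq \bm{W}_{k+1}$, so this residual is non-positive and can be discarded. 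Finally, the leftover quadratic in $\hat{\xx}^{k+1}-\xx^k$ has weight $\sigma_k(\bm{Q}^k-\Lambda-\sigma_k\Xi) = \tfrac{\sigma_k}{\tau_k}(\bm{P}\bm{B}-\tau_k(\Lambda+\sigma_k\Xi))$, which is positive semidefinite by \eqref{eq:MI1}; this produces exactly the final negative term in \eqref{eq:martingale}. Collecting everything yields the claimed estimate.

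The main obstacle is purely bookkeeping: one must track four different weighted-norm terms (one appearing in $V_k$, two coming from the primal descent estimate, and one produced by \eqref{eq:normx}) and verify that the coefficients of $\norm{\xx^k-\xx^\ast}^2$ cancel identically while those of $\Ex_k[\norm{\xx^{k+1}-\xx^\ast}^2]$ are controlled precisely by \eqref{eq:MI2}. The design of $\bm{W}_k$ is tailored so that these two cancellations occur simultaneously, which is why the parameter conditions take exactly the form \eqref{eq:MI1}--\eqref{eq:MI2}.
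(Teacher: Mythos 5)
Your proposal is correct and follows essentially the same route as the paper's proof: combine the descent estimate \eqref{eq:finalpsi} with the identities \eqref{eq:normx} and \eqref{eq:psi}, scale by $\sigma_k$, and invoke \eqref{eq:MI2} to dominate $\bm{W}_{k+1}$ by $\tfrac{\sigma_k}{\tau_k}\bm{P}^2\bm{B}+\sigma_k\bm{P}\Upsilon$ and \eqref{eq:MI1} to control the $\hat{\xx}^{k+1}-\xx^k$ term. The only difference is cosmetic ordering (you multiply by $\sigma_k$ before assembling the three ingredients rather than after), and your cancellation check for the coefficient of $\norm{\xx^k-\xx^\ast}^2$ is accurate.
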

\begin{proof}
Using identity \eqref{eq:normx} with $\bm{M}=\bm{Q}_{k}+\Upsilon$, we get 
 \begin{align*}
 \Ex_{k}\left[\frac{1}{2}\norm{\xx^{k+1}-\xx^{*}}^{2}_{\bm{P}\bm{Q}^{k}+\bm{P}\Upsilon}\right]=\frac{1}{2}\norm{\hat{\xx}^{k+1}-\xx^{*}}^{2}_{\bm{Q}^{k}+\Upsilon}+\frac{1}{2}\norm{\xx^{k}-\xx^{*}}^{2}_{(\bm{P}-\bm{I})(\bm{Q}^{k}+\Upsilon)}
 \end{align*}
 Furthermore, eq. \eqref{eq:psi} yields
 \begin{align*}
 \frac{S_{k}}{\sigma_{k}}\Ex_{k}[\hat{\Psi}_{k+1}-\Psi(\xx^{*})]=\frac{S_{k-1}}{\sigma_{k}}\left(\hat{\Psi}_{k}-\Psi(\xx^{*})\right)+\left(\Psi(\hat{\xx}^{k+1})-\Psi(\xx^{*})\right),
 \end{align*}
 and \eqref{eq:finalpsi} shows 
 \begin{align*}
\frac{S^{2}_{k}}{\sigma_{k}} \Ex_{k}[h(\bm{w}^{k+1})-h(\xx^{*})]&\leq \frac{S^{2}_{k-1}}{\sigma_{k}}[h(\bm{w}^{k})-h(\xx^{*})]-[\Psi(\hat{\xx}^{k+1})-\Psi(\xx^{*})]-\sigma_{k}[h(\xx^{k})-h(\xx^{*})]\\
&+\frac{1}{2}\norm{\xx^{k}-\xx^{*}}^{2}_{\bm{Q}^{k}}-\frac{1}{2}\norm{\hat{\xx}^{k+1}-\xx^{*}}^{2}_{\bm{Q}^{k}+\Upsilon}-\frac{1}{2}\norm{\hat{\xx}^{k+1}-\xx^{k}}^{2}_{\bm{Q}^{k}-\Lambda-\sigma_{k}\Xi}.
 \end{align*}
 Adding these three expression together, we obtain the recursion
 \begin{equation}\label{eq:split2}
 \begin{split}
& \Ex_{k}\left[\frac{1}{2}\norm{\xx^{k+1}-\xx^{*}}^{2}_{\bm{P}\bm{Q}^{k}+\bm{P}\Upsilon}+\frac{S_{k}}{\sigma_{k}}\left(\hat{\Psi}_{k+1}-\Psi(\xx^{*})+S_{k}(h(\bm{w}^{k+1})-h(\xx^{*}))\right)\right]\\
&\leq \frac{1}{2}\norm{\xx^{k}-\xx^{*}}^{2}_{\bm{P}\bm{Q}^{k}+(\bm{P}-\bm{I})\Upsilon}+\frac{S_{k-1}}{\sigma_{k}}\left(\hat{\Psi}_{k}-\Psi(\xx^{*})+S_{k-1}(h(\bm{w}^{k})-h(\xx^{*}))\right)\\
&-\frac{1}{2}\norm{\hat{\xx}^{k+1}-\xx^{k}}^{2}_{\bm{Q}^{k}-(\Lambda+\sigma_{k}\Xi)}-\sigma_{k}(h(\xx^{k})-h(\xx^{*})).
 \end{split}
 \end{equation}
Recall that $\bm{Q}^{k}=\frac{1}{\tau_{k}}\bm{P}\bm{B}$. Condition \eqref{eq:MI1} guarantees that $\bm{Q}^{k}\succeq \Lambda+\sigma_{k}\Xi$. Multiplying both sides of \eqref{eq:split2} by $\sigma_{k}$, we obtain 
\begin{align*}
& \Ex_{k}\left[\frac{1}{2}\norm{\xx^{k+1}-\xx^{*}}^{2}_{\frac{\sigma_{k}}{\tau_{k}}\bm{P}^{2}\bm{B}+\sigma_{k}\bm{P}\Upsilon}+S_{k}\left(\hat{\Psi}_{k+1}-\Psi(\xx^{*})+S_{k}(h(\bm{w}^{k})-h(\bm{x}^{*}))\right)\right]\\
&\leq \frac{1}{2}\norm{\xx^{k}-\xx^{*}}^{2}_{\frac{\sigma_{k}}{\tau_{k}}\bm{P}^{2}\bm{B}+\sigma_{k}(\bm{P}-\bm{I})\Upsilon}+S_{k-1}\left(\hat{\Psi}_{k}-\Psi(\xx^{*})+S_{k-1}(h(\bm{w}^{k})-h(\xx^{*}))\right)\\
&-\frac{1}{2}\norm{\hat{\xx}^{k+1}-\xx^{k}}^{2}_{\frac{\sigma_{k}}{\tau_{k}}\bm{P}\bm{B}-\sigma_{k}(\Lambda+\sigma_{k}\Xi)}-\sigma^{2}_{k}(h(\xx^{k})-h(\xx^{*})).
 \end{align*}
Under condition \eqref{eq:MI2}, it follows that 
 $$
 \norm{\xx^{k+1}-\xx^{*}}^{2}_{\bm{W}_{k+1}}\leq\norm{\xx^{k+1}-\xx^{*}}^{2}_{\frac{\sigma_{k}}{\tau_{k}}\bm{P}^{2}\bm{B}+\sigma_{k}\bm{P}\Upsilon}.
 $$
Exploiting this relation in the penultimate display, using definitions \eqref{eq:Fk} and \eqref{eq:Vk}, one readily obtains 
\[
 \Ex_{k}\left[V_{k+1}(\xx^{*})\right]\leq V_{k}(\xx^{*})-\sigma^{2}_{k}(h(\xx^{k})-h(\xx^{*}))-\frac{1}{2}\norm{\hat{\xx}^{k+1}-\xx^{k}}^{2}_{\frac{\sigma_{k}}{\tau_{k}}(\bm{P}\bm{B}-\tau_{k}(\sigma_{k}\Xi+\Lambda))}
\]
\end{proof}
 
\subsection{The Convex case}
\label{sec:convex}
Suppose that $\Upsilon=0$, so that $\bm{W}_{k}=\frac{\sigma_{k}}{\tau_{k}}\bm{P}^{2}\bm{B}$. In this case, the matrix inequality \eqref{eq:MI2} is satisfied for any sequences $(\sigma_{k})_{k\geq 0},(\tau_{k})_{k\geq 0}$ satisfying $\frac{\sigma_{k}}{\tau_{k}}\geq\frac{\sigma_{k+1}}{\tau_{k+1}}$. In particular, this can be realised with the policy $\sigma_{k}=\sigma\tau_{k}$ for all $k\geq 0$, and some constant $\sigma\in(0,\min_{i\in[d]}\pi_{i}]$. This implies $\theta_{k}=\frac{\sigma}{1+k\sigma}$ for all $k\geq 0$. This specification satisfies all conditions needed for Lemma \ref{lem:average} to hold. We further set $\tau_{k}\equiv 1$. It only remains to see if matrix inequality \eqref{eq:MI1} is satisfied. In fact, this condition gives us a restriction on $\sigma$, reading as 
\begin{equation}\label{eq:MI_convex}
\bm{P}\bm{B}\succeq \sigma\Xi+\Lambda.
\end{equation}
We show later in this section that this matrix inequality is achievable. In this regime, we will show that we get an $O(1/k)$ iteration complexity estimate of the averaged sequence $(\bm{w}^{k})_{k}$, in terms of the objective function value. This extends the results reported in \cite{malitsky17,malitsky2019primaldual,luke18} to general random block-coordinate activation schemes. The following Assumptions shall be in place throughout this section:
 \begin{assumption}\label{ass:SaddleExists}
 Problem \eqref{eq:P} admits a saddle point, i.e. a pair $(\xx^{\ast},\bm{y}^{\ast})\in\R^{m}\times\R^{m}$ satisfying \eqref{eq:KKT}.
 \end{assumption}
 \begin{assumption}\label{ass:Solution}
The solution set of problem \eqref{eq:P} $\X^{\ast}$ is nonempty.
\end{assumption}

Thanks to eq. \eqref{eq:waverage}, we know that $\bm{w}^{k}\in\Conv(\xx^{0},\ldots,\xx^{k})\subseteq \domain(r)$. Therefore, 
\begin{align*}
R(\bm{w}^{k})-R(\xx^{\ast})&\geq \inner{-\nabla\phi(\xx^{*})-\bm{A}^{\top}\bm{A}\bm{y}^{*}}{\bm{w}^{k}-\xx^{*}}\\
&\geq \phi(\xx^{*})-\phi(\bm{w}^{k})+\inner{\bm{A}\bm{y}^{*}}{\bm{A}(\xx^{*}-\bm{w}^{k})}
\end{align*}
Whence, 
\begin{equation}\label{eq:LBw}
\Psi(\bm{w}^{k})-\Psi(\xx^{*})\geq-\norm{\bm{A}\bm{y}^{*}}\cdot\norm{\bm{A}(\bm{w}^{k}-\xx^{*})}=-\delta\sqrt{h(\bm{w}^{k})-h(\xx^{*})},
\end{equation}
where $\delta:=\sqrt{2}\norm{A\bm{y}^{*}}$. We assume that $\tau_{k}\equiv 1$ and $\sigma_{k}=\sigma>0$. The energy functions \eqref{eq:Fk} and \eqref{eq:Vk} take the form $F_{k}=\hat{\Psi}_{k}+(1+(k-1)\sigma)(h(\bm{w}^{k})-h(\xx^{*}))$,  and $V_{k}(\xx):=\frac{1}{2}\norm{\xx^{k}-\xx}^{2}_{\bm{P}^{2}\bm{B}}+(1+(k-1)\sigma)(F_{k}-\Psi(\xx))$. 
\begin{lemma}\label{lem:LBF}
Suppose that Assumption \ref{ass:SaddleExists} applies. Then, the process $\left(S_{k-1}(F_{k}-\Psi(\xx^{*}))\right)_{k\geq 0}$ is almost surely bounded.
\end{lemma}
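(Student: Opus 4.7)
The plan is to obtain a two-sided bound on $S_{k-1}(F_k-\Psi(\xx^*))$: a deterministic lower bound coming from the saddle-point inequality \eqref{eq:LBw}, and an almost-sure upper bound obtained from the Lyapunov recursion of Lemma~\ref{lem:lyapunovRS} via nonnegative supermartingale convergence. The main conceptual obstacle is that $F_k-\Psi(\xx^*)$ need not be nonnegative (the iterates $\bm{w}^k$ are only approximately feasible), so one must first shift by an absolute constant to invoke the supermartingale theorem.

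\textbf{Step 1 (Deterministic lower bound).} Since $\hat{\Psi}_k\geq \Psi(\bm{w}^k)$ by Lemma~\ref{lem:average}, and using the saddle-point estimate \eqref{eq:LBw}, we get
\begin{equation*}
F_k-\Psi(\xx^*)\;\geq\;\Psi(\bm{w}^k)-\Psi(\xx^*)+S_{k-1}\bigl(h(\bm{w}^k)-h(\xx^*)\bigr)\;\geq\;-\delta\sqrt{h(\bm{w}^k)-h(\xx^*)}+S_{k-1}\bigl(h(\bm{w}^k)-h(\xx^*)\bigr).
\end{equation*}
Writing $u=\sqrt{h(\bm{w}^k)-h(\xx^*)}\geq 0$ and completing the square gives $F_k-\Psi(\xx^*)\geq S_{k-1}(u-\tfrac{\delta}{2S_{k-1}})^2-\tfrac{\delta^2}{4S_{k-1}}\geq -\tfrac{\delta^2}{4S_{k-1}}$, hence
\begin{equation*}
S_{k-1}\bigl(F_k-\Psi(\xx^*)\bigr)\;\geq\;-\tfrac{\delta^2}{4}\qquad\text{for all }k\geq 1.
\end{equation*}

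\textbf{Step 2 (Almost-sure upper bound via supermartingale).} In the regime $\Upsilon=0$, $\tau_k\equiv 1$, $\sigma_k\equiv\sigma$ considered here, the matrix $\bm{W}_k=\sigma\bm{P}^2\bm{B}$ is positive definite, so $\tfrac{1}{2}\|\xx^k-\xx^*\|^2_{\bm{W}_k}\geq 0$ and therefore $V_k(\xx^*)\geq S_{k-1}(F_k-\Psi(\xx^*))\geq -\tfrac{\delta^2}{4}$ by Step~1. Applying Lemma~\ref{lem:lyapunovRS} at the reference point $\xx^*\in\Xallsol\subseteq\X$, and discarding the nonpositive correction terms on the right-hand side, we obtain
\begin{equation*}
\Ex_k\bigl[V_{k+1}(\xx^*)+\tfrac{\delta^2}{4}\bigr]\;\leq\;V_k(\xx^*)+\tfrac{\delta^2}{4}.
\end{equation*}
Thus $\bigl(V_k(\xx^*)+\tfrac{\delta^2}{4}\bigr)_{k\geq 0}$ is a nonnegative supermartingale adapted to the natural filtration generated by $(\iota_t)_{t<k}$.

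\textbf{Step 3 (Conclusion).} By Doob's nonnegative supermartingale convergence theorem, $V_k(\xx^*)$ converges almost surely to a finite random variable, hence the sample paths of $(V_k(\xx^*))_{k\geq 0}$ are almost surely bounded. Using once more $\tfrac{1}{2}\|\xx^k-\xx^*\|^2_{\bm{W}_k}\geq 0$, we deduce
\begin{equation*}
-\tfrac{\delta^2}{4}\;\leq\;S_{k-1}\bigl(F_k-\Psi(\xx^*)\bigr)\;\leq\;V_k(\xx^*),
\end{equation*}
and the right-hand side is almost surely bounded in $k$. This proves that $\bigl(S_{k-1}(F_k-\Psi(\xx^*))\bigr)_{k\geq 0}$ is almost surely bounded, as claimed. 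The only delicate point is the lower-bound argument of Step~1, where the saddle-point duality gap is controlled by the infeasibility $\sqrt{h(\bm{w}^k)-h(\xx^*)}$ and then absorbed into the $S_{k-1}(h(\bm{w}^k)-h(\xx^*))$ term of $F_k$ via a quadratic completion—this is exactly where the composite structure of $F_k$ pays off.
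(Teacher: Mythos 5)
Your proof is correct, and its Step~1 is essentially the paper's entire proof of Lemma~\ref{lem:LBF}: both arguments combine $\hat{\Psi}_{k}\geq\Psi(\bm{w}^{k})$ with the saddle-point bound \eqref{eq:LBw} and then absorb the $-\delta S_{k-1}\sqrt{h(\bm{w}^{k})-h(\xx^{*})}$ term into $S_{k-1}^{2}(h(\bm{w}^{k})-h(\xx^{*}))$ by completing the square. You obtain the constant $-\delta^{2}/4$ by minimising the full quadratic, whereas the paper deliberately minimises only half of it to retain the residual term $\tfrac{S_{k-1}^{2}}{2}(h(\bm{w}^{k})-h(\xx^{*}))$ (yielding the weaker constant $-\delta^{2}/2$ but an inequality, eq.~\eqref{eq:convex1}, that is reused later to get the $O(1/k^{2})$ feasibility rate); both constants are valid for the boundedness claim. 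The genuine difference is that the paper's proof stops at the deterministic lower bound and never proves an upper bound inside this lemma -- the a.s.\ upper bound is only obtained afterwards, in part (i) of Theorem~\ref{thm:convex}, by exactly the supermartingale argument you give in Steps~2--3 (nonnegativity of $V_{k}(\xx^{*})+\mathrm{const}$ plus Lemma~\ref{lem:lyapunovRS} and Lemma~\ref{lem:RS}). So your version actually delivers the two-sided boundedness that the lemma's wording promises, at the mild cost of importing the step-size conditions \eqref{eq:MI1}--\eqref{eq:MI2} (satisfied in the convex regime of Section~\ref{sec:convex}) into the lemma's hypotheses; the paper's leaner proof establishes only the lower bound, which is all that is subsequently used. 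One cosmetic caveat: your completion of the square divides by $S_{k-1}$, which is zero at $k=0$ (where $S_{-1}=0$ and the claim is trivial), so the bound should be stated for $k\geq 1$.
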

\begin{proof}
Note that 
\begin{align*}
S_{k-1}(F_{k}-\Psi(\xx^{*}))&=S_{k-1}(\hat{\Psi}_{k}-\Psi(\xx^{*}))+S^{2}_{k-1}(h(\bm{w}^{k})-h(\xx^{*}))\\
&\geq S_{k-1}(\Psi(\bm{w}^{k})-\Psi(\xx^{*}))+S^{2}_{k-1}(h(\bm{w}^{k})-h(\xx^{*}))\\
&\overset{\eqref{eq:LBw}}{\geq} -\delta S_{k-1}\sqrt{h(\bm{w}^{k})-h(\xx^{*})}+S^{2}_{k-1}(h(\bm{w}^{k})-h(\xx^{*}))\\
&=\frac{S^{2}_{k-1}}{2}(h(\bm{w}^{k})-h(\xx^{*}))+\left(\frac{S^{2}_{k-1}}{2}(h(\bm{w}^{k})-h(\xx^{*}))-\delta S_{k-1}\sqrt{h(\bm{w}^{k})-h(\xx^{*})}\right)
\end{align*}
The convex function $t\mapsto \frac{S^{2}}{2}t^{2}-\delta S t$ attains the global minimum at the value $-\frac{\delta^{2}}{2}$. Hence, 
\begin{equation}\label{eq:convex1}
S_{k-1}(F_{k}-\Psi(\xx^{*}))\geq \frac{S^{2}_{k-1}}{2}(h(\bm{w}^{k})-h(\xx^{*}))-\frac{\delta^{2}}{2}\geq-\frac{\delta^{2}}{2}.
\end{equation}
The last inequality uses the fact that $\xx^{\ast}\in\X$, so that $h(\bm{w}^{k})\geq h(\xx^{\ast})$. This completes the proof.
\end{proof}
We are now in the position to give the proof of the first main result of this paper.
\begin{theorem}\label{thm:convex}
Suppose that Assumptions \ref{ass:SaddleExists} and \ref{ass:Solution} apply. Consider the sequence $(\bm{z}^{k},\bm{w}^{k},\xx^{k})_{k\geq 0}$ generated by Algorithm \ref{alg:RBCD} with $\sigma_{k}\equiv\sigma$ and $\tau_{k}\equiv 1$ for all $k\geq 1$. Then, the following holds: 
\begin{itemize}
\item[(i)] 
$h(\bm{w}^{k})-h(\xx^{*})=O(1/k^{2})$ a.s. 
\item[(ii)] $(\xx^{k})_{k}$ and $(\bm{w}^{k})_{k}$ converge a.s. to a random variable with values in $\X^{\ast}$.
\item[(iii)] $\abs{\Psi(\bm{w}^{k})-\Psi(\xx^{*})}=O(1/k)$ a.s. 
\end{itemize}
\end{theorem}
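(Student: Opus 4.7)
The plan is to apply Lemma~\ref{lem:lyapunovRS} with the constant choices $\sigma_{k}\equiv\sigma$ and $\tau_{k}\equiv 1$, which turns $\bm{W}_{k}=\sigma\bm{P}^{2}\bm{B}$ into a constant PSD matrix, renders \eqref{eq:MI2} automatic, and leaves only \eqref{eq:MI_convex} as the step-size restriction. Writing $S_{k-1}=1+(k-1)\sigma=\Theta(k)$ and shifting the Lyapunov function by the constant from Lemma~\ref{lem:LBF}, set $\widetilde V_{k}(\xx^{\ast}):=V_{k}(\xx^{\ast})+\tfrac{\delta^{2}}{2}$. Lemma~\ref{lem:LBF} gives the lower bound
\[
\widetilde V_{k}(\xx^{\ast})\;\geq\;\tfrac{1}{2}\norm{\xx^{k}-\xx^{\ast}}^{2}_{\sigma\bm{P}^{2}\bm{B}}+\tfrac{S_{k-1}^{2}}{2}(h(\bm{w}^{k})-h(\xx^{\ast}))\;\geq\;0,
\]
while \eqref{eq:martingale} rewrites as $\Ex_{k}[\widetilde V_{k+1}(\xx^{\ast})]\leq\widetilde V_{k}(\xx^{\ast})-\sigma^{2}(h(\xx^{k})-h(\xx^{\ast}))-\tfrac{1}{2}\norm{\hat\xx^{k+1}-\xx^{k}}^{2}_{\sigma(\bm{P}\bm{B}-\sigma\Xi-\Lambda)}$, in which both subtracted terms are nonnegative under \eqref{eq:MI_convex}. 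Hence $(\widetilde V_{k}(\xx^{\ast}))_{k\geq 0}$ is a nonnegative supermartingale.

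For part~(i), the almost-sure supermartingale convergence theorem gives $\widetilde V_{k}(\xx^{\ast})=O(1)$ a.s., which combined with the above lower bound and $S_{k-1}=\Theta(k)$ yields $h(\bm{w}^{k})-h(\xx^{\ast})=O(1/k^{2})$ a.s.

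For part~(iii), drop the nonnegative term $\tfrac{1}{2}\norm{\xx^{k}-\xx^{\ast}}^{2}_{\bm{W}_{k}}$ and use $F_{k}=\hat\Psi_{k}+S_{k-1}(h(\bm{w}^{k})-h(\xx^{\ast}))$ to deduce $S_{k-1}(\hat\Psi_{k}-\Psi(\xx^{\ast}))\leq V_{k}(\xx^{\ast})=O(1)$ a.s. Lemma~\ref{lem:average} presents $\bm{w}^{k}$ as a convex combination of $\xx^{0},\dots,\xx^{k}$, so Jensen's inequality gives $\Psi(\bm{w}^{k})\leq\hat\Psi_{k}$, yielding the upper estimate $\Psi(\bm{w}^{k})-\Psi(\xx^{\ast})\leq O(1/k)$. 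The matching lower bound is immediate from \eqref{eq:LBw} and part~(i), since $\Psi(\bm{w}^{k})-\Psi(\xx^{\ast})\geq-\delta\sqrt{h(\bm{w}^{k})-h(\xx^{\ast})}=O(1/k)$ a.s. Together these two estimates give $\abs{\Psi(\bm{w}^{k})-\Psi(\xx^{\ast})}=O(1/k)$ a.s.

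For part~(ii), invoke the Robbins--Siegmund theorem on the above supermartingale inequality: for each fixed $\xx^{\ast}\in\Xallsol$, $\widetilde V_{k}(\xx^{\ast})$ converges a.s.\ and one obtains the summability statements $\sum_{k}\norm{\hat\xx^{k+1}-\xx^{k}}^{2}<\infty$ and $\sum_{k}(h(\xx^{k})-h(\xx^{\ast}))<\infty$ a.s. In particular $(\xx^{k})$ is a.s.\ bounded, $\hat\xx^{k+1}-\xx^{k}\to 0$ a.s., and by the recursive structure of Lemma~\ref{lem:average} together with $\theta_{k}\to 0$ one verifies $\bm{w}^{k}-\xx^{k}\to 0$ a.s. Hence any subsequential limit $\bar\xx$ of $(\xx^{k})$ is also a limit of $(\bm{w}^{k})$, satisfies $h(\bar\xx)=h(\xx^{\ast})$ (so $\bar\xx\in\X$), and by part~(iii) together with lower semicontinuity of $\Psi$ satisfies $\Psi(\bar\xx)\leq\Psi(\xx^{\ast})$, so $\bar\xx\in\Xallsol$. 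The remaining step is to upgrade subsequential convergence to full convergence: apply the supermartingale conclusion simultaneously on a countable dense subset of $\Xallsol$ (which exists since $\Xallsol\subset\R^{m}$ is separable) and appeal to a stochastic Opial-type lemma, exactly as in Combettes--Pesquet for quasi-Fej\'er monotone random sequences. The main obstacle is precisely this last argument, since the reference point in the Lyapunov function is random and the metric $\bm{W}_{k}$ depends on $k$; in our regime $\bm{W}_{k}$ is constant, which is what makes the stochastic Opial conclusion applicable and delivers the almost-sure limit of $(\xx^{k})$ (and hence of $(\bm{w}^{k})$) inside $\Xallsol$.
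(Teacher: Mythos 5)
Parts (i) and (iii) of your proposal are correct and essentially reproduce the paper's own argument: the shifted process $\widetilde V_{k}(\xx^{\ast})=V_{k}(\xx^{\ast})+\delta^{2}/2$ is a nonnegative supermartingale by Lemma \ref{lem:LBF} and \eqref{eq:martingale}, Robbins--Siegmund gives $\widetilde V_{k}(\xx^{\ast})=O(1)$ a.s., and reading off the three nonnegative summands of $V_{k}$ yields (i) and the upper half of (iii), the lower half being \eqref{eq:LBw}; your (iii) is in fact slightly cleaner than the paper's, which passes through expectations. The uniqueness-of-cluster-point step in (ii) is the same Opial-type two-subsequence argument the paper uses, and your remark about applying the supermartingale conclusion on a countable dense subset of $\X^{\ast}$ is a legitimate way to handle the randomness of the reference point.

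The genuine gap is in your identification of cluster points of $(\xx^{k})_{k}$ as elements of $\X^{\ast}$. You route this through $(\bm{w}^{k})_{k}$ by claiming $\bm{w}^{k}-\xx^{k}\to 0$ a.s.\ ``by the recursive structure of Lemma \ref{lem:average} together with $\theta_{k}\to 0$'', so that a cluster point $\bar\xx$ of $(\xx^{k})_{k}$ is also one of $(\bm{w}^{k})_{k}$ and $\Psi(\bar\xx)\leq\Psi(\xx^{\ast})$ then follows from (iii) and lower semicontinuity. But $\bm{w}^{k}-\xx^{k}\to 0$ does not follow from what you have established: $\bm{w}^{k}$ is a Toeplitz average of $\xx^{0},\ldots,\xx^{k}$ whose weight on the most recent iterate is $\theta_{k}/\pi_{i}\to 0$, so the averaging window spreads out, and Silverman--Toeplitz only transfers the limit once the \emph{whole} sequence $(\xx^{k})_{k}$ is known to converge --- which is precisely what you are trying to prove. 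Concretely, $e^{k}:=\xx^{k}-\bm{w}^{k}$ satisfies $e^{k+1}=(1-\theta_{k})e^{k}+(\bm{I}-\theta_{k}\bm{P})(\xx^{k+1}-\xx^{k})$ with $\theta_{k}\sim 1/k$, and $\norm{\xx^{k+1}-\xx^{k}}\to 0$ is not enough to force $e^{k}\to 0$ (one would need roughly $k\norm{\xx^{k+1}-\xx^{k}}\to 0$, which is not available from the square-summability you derive). The paper sidesteps this entirely: it proves optimality of cluster points of $(\xx^{k})_{k}$ directly by passing to the limit in the forward--backward optimality inequality of Lemma \ref{lem:xi} along the subsequence, using $\hat{\xx}^{k_{j}+1}-\xx^{k_{j}}\to 0$, the estimate $S_{k}\inner{\nabla h(\bm{z}^{k})}{\hat{\xx}^{k+1}-\xx^{\ast}}\to 0$, lower semicontinuity of $R$ and convexity of $\Phi$; only after full convergence of $(\xx^{k})_{k}$ is established does it invoke Silverman--Toeplitz to transfer the limit to $(\bm{w}^{k})_{k}$. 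Your cluster-point step needs to be replaced by an argument of this type.
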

\begin{proof}
(i) From Lemma \ref{lem:LBF}, it follows that the process $\mathcal{E}_{k}:=V_{k}(\xx^{*})+\frac{\delta^{2}}{2}$ is non-negative. Eq. \eqref{eq:martingale} shows that $\mathcal{E}_{k}$ satisfies the recursion 
\begin{equation}\label{eq:SM}
\Ex_{k}[\mathcal{E}_{k+1}]\leq\mathcal{E}_{k}-\sigma^{2}(h(\xx^{k})-h(\xx^{*}))-\frac{1}{2}\norm{\hat{\xx}^{k+1}-\xx^{k}}^{2}_{\sigma(\bm{P}\bm{B}-(\sigma\Xi+\Lambda))}.
\end{equation}
Lemma \ref{lem:RS} implies that $(\mathcal{E}_{k})_{k\geq 0}$ converges a.s. to a limiting random variable $\mathcal{E}_{\infty}\in[0,\infty)$ and 
$$
\sum_{k=0}^{\infty} \left[\sigma^{2}(h(\xx^{k})-h(\xx^{*}))+\frac{1}{2}\norm{\hat{\xx}^{k+1}-\xx^{k}}^{2}_{\sigma(\bm{P}\bm{B}-(\sigma\Xi+\Lambda))}\right]<\infty\quad\text{a.s.}
$$
Since $h(\xx^{k})-h(\xx^{*})\geq 0$, it follows $\lim_{k\to\infty}\frac{1}{2}\norm{\hat{\xx}^{k+1}-\xx^{k}}^{2}_{\sigma(\bm{P}\bm{B}-(\sigma\Xi+\Lambda))}=0$. Consequently, the sequence $\left(\frac{1}{2}\norm{\hat{\xx}^{k+1}-\xx^{k}}^{2}_{\sigma(\bm{P}\bm{B}-(\sigma\Xi+\Lambda))}\right)_{k\geq 0}$ is bounded. 
By definition of the energy function $V_{k}(\xx^{*})$, the sequence $(\xx^{k})_{k\geq 0}$ is bounded and \eqref{eq:convex1} yields 
$h(\bm{w}^{k})-h(\xx^{*})\geq -\frac{\delta^{2}}{S^{2}_{k-1}}.$ Hence, $h(\bm{w}^{k})-h(\xx^{*})=O(1/k^{2})$ a.s. Moreover, it is easy to see that $\lim_{k\to\infty}\sum_{t=0}^{k}\gamma_{i}^{k,t}=1$ and $\lim_{k\to\infty}\gamma_{i}^{k,t}=0$ for all $i\in[d]$. Hence, by Lemma \ref{lem:average} and the Silverman-Toeplitz theorem, the sequence $(\bm{w}^{k})_{k}$ converges to the same limit point as $(\xx^{k})_{k}$, a.s.

By definition of the energy function $V_{k}$, the sequence $(\xx^{k})_{k\geq 0}$ is a.s. bounded. We show next that all accumulation bounds are contained in the set of solutions $\X^{\ast}$ of \eqref{eq:P}.

Let $\Omega_{1}$ be a set of probability 1 on which the subsequence $(\xx^{k_{j}}(\omega))_{j\in\Natural}$ converges to a limiting random variable $\bar{\xx}(\omega)$ for all $\omega\in\Omega_{1}$. It follows from $\lim_{j\to\infty}\norm{\hat{\xx}^{k_{j}}(\omega)-\xx^{k_{j}}(\omega)}=0$ on $\Omega_{1}$ that $(\hat{\xx}^{k_{j}})_{j\in\Natural}$ converges to $\bar{\xx}$ as well on $\Omega_{1}$. Furthermore, 
\begin{align*}
h(\bm{z}^{k})-h(\xx^{\ast})&=\theta_{k}[h(\bm{w}^{k})-h(\xx^{\ast})]+(1-\theta_{k})[h(\xx^{k})-h(\xx^{\ast})]\\
&-\frac{\theta_{k}(1-\theta_{k})}{2}\norm{\bm{A}(\xx^{k}-\bm{w}^{k})}^{2}\\
&=O(k^{-2}),
\end{align*}
and for all $\xx^{\ast}\in\X$, 
\begin{align*}
\inner{\nabla h(\bm{z}^{k})}{\hat{\xx}^{k+1}-\xx^{\ast}}&=\inner{\bm{A}^{\top}\bm{A}(\bm{z}^{k}-\xx^{\ast})}{\hat{\xx}^{k+1}-\xx^{\ast}}\\
&\leq\norm{\bm{A}(\bm{z}^{k}-\xx^{\ast})}\cdot\norm{\bm{A}(\hat{\xx}^{k+1}-\xx^{\ast})}\\
&\leq 2\sqrt{h(\bm{z}^{k})-h(\xx^{\ast})}\cdot\sqrt{h(\hat{\xx}^{k+1})-h(\xx^{\ast})}=o(k^{-1})
\end{align*}
Hence, $\lim_{k\to\infty}S_{k}\inner{\nabla h(\bm{z}^{k})}{\hat{\xx}^{k+1}-\xx^{\ast}}=0$ for all $\xx^{\ast}\in\X$ a.s. Using Lemma \ref{lem:xi} at the point $\xx^{\ast}\in\X^{\ast}\subset\X$ shows that 
\begin{align*}
&R(\hat{\xx}^{k_{j+1}})+\inner{\nabla\Phi(\xx^{k_{j}})}{\hat{\xx}^{k_{j+1}}-\xx^{k_{j}}}+S_{k}\inner{\nabla h(\bm{z}^{k_{j}})}{\hat{\xx}^{k_{j+1}}-\bm{z}^{k_{j}}}+\frac{1}{2}\norm{\hat{\xx}^{k_{j+1}}-\xx^{k_{j}}}^{2}_{\bm{Q}}\\
&\leq R(\xx^{\ast})+\inner{\nabla\Phi(\xx^{k_{j}})}{\xx^{\ast}-\xx^{k_{j}}}+S_{k}\inner{\nabla h(\bm{z}^{k_{j}})}{\xx^{\ast}-\bm{z}^{k_{j}}}+\frac{1}{2}\norm{\hat{\xx}^{\ast}-\xx^{k_{j}}}^{2}_{\bm{Q}}-\frac{1}{2}\norm{\xx^{\ast}-\hat{\xx}^{k_{j+1}}}_{\bm{Q}}.
\end{align*}
Let $j\to\infty$, and use the lower semi-continuity of the function $R(\cdot)$, one arrives that the inequality 
$$
R(\bar{\xx})\leq R(\xx^{\ast})+\inner{\nabla\Phi(\bar{\xx})}{\xx^{\ast}-\bar{\xx}}.
$$
Since $h(\xx^{k_{j}})-h(\xx^{\ast})\to 0$, the limit point $\bar{\xx}$ is an element of $\X$. Convexity of $\Phi$ in turn yields $\Phi(\xx^{\ast})\geq\Phi(\bar{\xx})+\inner{\nabla\Phi(\bar{\xx})}{\xx^{\ast}-\bar{\xx}}$. We conclude that $\Psi(\bar{\xx})\leq\Psi(\xx^{\ast})$ and $\bar{\xx}\in\X$. Therefore, $\bar{\xx}\in\X^{\ast}$.\\
\noindent
(ii) We next show that cluster points are unique and thereby demonstrate almost sure convergence of the last iterate. We argue by contradiction. Suppose there are converging subsequences $(\xx^{k})_{k\in\mathcal{K}_{1}}$ and $(\xx^{k})_{k\in\mathcal{K}_{2}}$ with limit points $\xx'$ and $\xx''$, respectively. Hence, $\Psi(\xx')=\Psi(\xx'')\equiv\Psi^{\ast}$. Following the above argument, we see that $\xx',\xx''\in\X^{\ast}$ a.s. The point $\xx^{\ast}$ chosen in the previous argument was arbitrary, so we can replace it by $\xx'$. Let us simplify notation by setting $a_{k}:=S_{k}(F_{k}-\Psi^{\ast})$. Since $V_{k}(\xx')$ converges almost surely, we see 
\begin{align*}
\lim_{k\to\infty}V_{k}(\xx')&=\lim_{k\to\infty,k\in\mathcal{K}_{1}}V_{k}(\xx')=\lim_{k\to\infty,k\in\mathcal{K}_{1}}\left(\frac{1}{2}\norm{\xx^{k}-\xx'}^{2}_{\bm{W}}+a_{k}\right)\\
&=\lim_{k\to\infty,k\in\mathcal{K}_{1}}a_{k}.
\end{align*}
Similarly, 
\begin{align*}
\lim_{k\to\infty}V_{k}(\xx')&=\lim_{k\to\infty,k\in\mathcal{K}_{2}}V_{k}(\xx')=\lim_{k\to\infty,k\in\mathcal{K}_{2}}\left(\frac{1}{2}\norm{\xx^{k}-\xx'}^{2}_{\bm{W}}+a_{k}\right)\\
&=\norm{\xx''-\xx'}_{\bm{W}}+\lim_{k\to\infty,k\in\mathcal{K}_{2}}a_{k}.
\end{align*}
We conclude that 
$$
\lim_{k\to\infty,k\in\mathcal{K}_{1}}a_{k}=\norm{\xx''-\xx'}_{\bm{W}}+\lim_{k\to\infty,k\in\mathcal{K}_{2}}a_{k}.
$$
Repeating the same analysis for $\xx''$ instead of $\xx'$, we get 
$$
\lim_{k\to\infty,k\in\mathcal{K}_{2}}a_{k}=\norm{\xx''-\xx'}_{\bm{W}}+\lim_{k\to\infty,k\in\mathcal{K}_{1}}a_{k}.
$$
Combining these two equalities, we see that $\xx'=\xx''$. Therefore, the whole sequence $(\xx^{k})_{k}$ converges pointwise almost surely to a solution. Lemma \ref{lem:average} yields the same assertion for $(\bm{w}^{k})_{k}$.\\
\noindent 
(iii) Taking expectations on both sides of \eqref{eq:martingale} and iterating the thus obtained expression gives 
$$
\Ex\left[\frac{1}{2}\norm{\xx^{k}-\xx^{*}}^{2}_{\bm{W}}+S_{k-1}(\hat{\Psi}_{k}-\Psi(\xx^{*}))+S^{2}_{k-1}(h(\bm{w}^{k})-h(\xx^{*}))\right]\leq \frac{1}{2}\norm{\xx^{0}-\xx^{*}}^{2}_{\bm{W}}.
$$
From here, we deduce 
$$
S_{k-1}\Ex[\hat{\Psi}_{k}-\Psi(\xx^{*})]\leq \frac{1}{2}\norm{\xx^{0}-\xx^{*}}^{2}_{\bm{W}}
$$
which gives $\hat{\Psi}_{k}-\Psi(\xx^{*})\leq O(1/k)$ a.s. Since $\hat{\Psi}_{k}\geq \Psi(\bm{w}^{k})$, it follows $\Psi(\bm{w}^{k})-\Psi(\xx^{*})\leq O(1/k)$ a.s.
To obtain a lower bound, it suffices to apply eq. \eqref{eq:LBw} and use the results from (i) to obtain 
$$
\Psi(\bm{w}^{k})-\Psi(\xx^{\ast})\geq -\delta\sqrt{h(\bm{w}^{k})-h(\xx^{\ast})}=O(1/k)\quad \text{a.s.}
$$
\end{proof}
We conclude this section with a concrete example showing that the matrix inequality \eqref{eq:MI_convex} can be satisfied.
\begin{example}
Assume that $\bm{\Xi}=\blkdiag(\pi_{1}^{-1}\bm{A}_{1}^{\top}\bm{A}_{1},\ldots,\pi_{d}^{-1}\bm{A}_{d}^{\top}\bm{A}_{d})$. In this case eq. \eqref{eq:MI_convex} can be decomposed to the block specific condition $\pi^{-1}_{i}\bm{B}_{i}\succeq \Lambda_{i}+\frac{\sigma}{\pi_{i}}\bm{A}_{i}^{\top}\bm{A}_{i}$. Let us assume further that $\Lambda_{i}=L_{i}\bm{I}_{m_{i}}$, for a scalar $L_{i}>0$. If we choose $\bm{B}_{i}=\pi_{i}(\lambda_{i}+L_{i})\bm{I}_{m_{i}}$, where $\lambda_{i}=\norm{\bm{A}_{i}}_{2}$. Then, a sufficient condition for satisfying the matrix inequality is $\lambda_{i}+L_{i}\geq L_{i}+\frac{\sigma}{\pi_{i}}\lambda_{i}$ for all $i\in[d]$. This inequality can be satisfied by choosing $\sigma=\min_{i\in[d]} \pi_{i}$, which is the largest value for a given set of activation probabilities, that is compatible with Lemma \ref{lem:average}. 
\end{example}
 \subsection{The strongly convex case}
 \label{sec:strongconvex}
 In this section we study the performance of Algorithm \ref{alg:RBCD} in the strongly convex regime.
 \begin{assumption}\label{ass:strongconvex}
 $\Upsilon\succ 0$.
 \end{assumption}
The main challenge we need to overcome is to design step size sequences which obey the matrix inequalities \eqref{eq:MI1} and \eqref{eq:MI2}. Let us focus on \eqref{eq:MI1} first and show how it can be restated in a more symmetric way. To that end, define the matrix $\bm{B}=\bm{P}^{-2}\Upsilon$ and $\bm{M}^{1/2}:=\bm{P}^{-1/2}\Upsilon^{1/2}$, so that $\bm{M}=\bm{P}^{-1}\Upsilon$. In terms of this new parameter matrix, we see 
\begin{equation}\label{eq:MI1-1}
\bm{P}^{-1}\Upsilon\succeq \tau_{k}\Lambda+\sigma_{k}\tau_{k}\bm{\Xi}\iff \frac{1}{\tau_{k}}\bm{I}\succeq \bm{M}^{-1/2}\Lambda\bm{M}^{-1/2}+\sigma_{k}\bm{M}^{-1/2}\bm{\Xi}\bm{M}^{-1/2}.
\end{equation}
It is easy to see that $\bm{\Xi}$ is symmetric. Furthermore, for $\bm{u}\in\R^{m}\setminus\{0\}$, we see $\bm{u}^{\top}\bm{\Xi}\bm{u}=\Ex[\norm{\bm{A}\bm{t}}^{2}]\geq 0$, where $\bm{t}:=\bm{P}\bm{E}_{k}\bm{u}$ is a random vector in $\R^{m}$ with mean $\bm{u}$. It follows $\bm{\Xi}\in\symm^{m}_{+}$. This suggests to relate the step size parameters to the spectra of the involved matrices. The rest of this section assumes the following parametric model on the coupling between $\sigma_{k}$ and $\tau_{k}$.
\begin{assumption}\label{ass:coupling}
The sequence $(\tau_{k})_{k}$ is non-increasing and positive. They are related by the coupling equation 
\begin{equation}\label{eq:coupling}
\sigma_{k}\tau_{k}=\alpha-\beta\tau_{k}\qquad\forall k\geq 0,
\end{equation}
where $\alpha$ and $\beta$ are positive numbers. 
\end{assumption}
We note that this coupling relation automatically means that $\sigma_{k}=\alpha/\tau_{k}-\beta$ is non-decreasing. A sufficient condition to satisfy eq. \eqref{eq:MI1-1} is 
 $$
 \frac{1}{\tau_{k}}\geq \lambda_{\max}(\bm{M}^{-1/2}\Lambda\bm{M}^{-1/2})+\sigma_{k}\lambda_{\max}(\bm{M}^{-1/2}\Xi\bm{M}^{-1/2}).
 $$
 Using the model \eqref{eq:coupling}, this boils down to 
 $$
 \frac{1}{\tau_{k}}(1-\alpha\lambda_{\max}(\bm{M}^{-1/2}\bm{\Xi}\bm{M}^{-1/2})) \geq \lambda_{\max}(\bm{M}^{-1/2}\Lambda\bm{M}^{-1/2})-\beta\lambda_{\max}(\bm{M}^{-1/2}\bm{\Xi}\bm{M}^{-1/2}).
 $$
Choosing 
\begin{equation}\label{eq:alpha}
\alpha:=\lambda_{\max}(\bm{M}^{-1/2}\bm{\Xi}\bm{M}^{-1/2})^{-1},
\end{equation}
 we get 
$$
\beta\geq \frac{\lambda_{\max}(\bm{M}^{-1/2}\Lambda\bm{M}^{-1/2})}{\lambda_{\max}(\bm{M}^{-1/2}\bm{\Xi}\bm{M}^{-1/2})}=\alpha\lambda_{\max}(\bm{M}^{-1/2}\Lambda\bm{M}^{-1/2}).
$$
We make the choice 
\begin{equation}\label{eq:beta}
\beta=\alpha\lambda_{\max}(\bm{M}^{-1/2}(\Lambda+\Upsilon)\bm{M}^{-1/2}),
\end{equation}
so that the coupling equation \eqref{eq:coupling} gives rise to a step size process satisfying matrix inequality \eqref{eq:MI1}. 
\begin{remark}
To get some intuition behind the derived conditions for $\alpha$ and $\beta$, consider the independent sampling case under which the operator $\bm{\Xi}$ factorises to $\bm{\Xi}=\blkdiag(\frac{1}{\pi_{1}}\bm{A}_{1}^{\top}\bm{A}_{1},\ldots,\frac{1}{\pi_{d}}\bm{A}_{d}^{\top}\bm{A}_{d})$. Further, suppose $\Lambda=\blkdiag[L_{1}\bm{I}_{m_{1}};\ldots;L_{d}\bm{I}_{m_{d}}]$. Then, the above choice says that 
\begin{align}
&\alpha=\left(\max_{i\in[d]}\frac{1}{\mu_{i}\pi_{i}^{2}}\norm{\bm{A}_{i}}_{2}^{2}\right)^{-1}, \text{and }\\
&\kappa:=\frac{\beta}{\alpha}=\max_{i\in[d]}\frac{L_{i}+\mu_{i}}{\pi_{i}\mu_{i}}.
\end{align}
Clearly, $\kappa$ is related to the condition number of problem \eqref{eq:P}, and satisfies $\kappa\geq\frac{1}{\pi_{i}}$ for all $i\in[d]$. We point out that the explicit expression for $\kappa$ does not rely on the independent sampling assumption. Only the quantification of the parameter $\alpha$ exploits this special structure.
\end{remark}
Let us fix these identified values for $\alpha$ and $\beta$, and continue our derivation with matrix inequality \eqref{eq:MI2}. Let us choose $\bm{B}=\bm{P}^{-2}\Upsilon=\blkdiag[\pi_{1}^{2}\mu_{1}\bm{I}_{m_{1}};\ldots;\pi_{d}^{2}\mu_{d}\bm{I}_{m_{d}}]$. Using the block structure, and the specification of the sequence $(\sigma_{k})_{k}$, it can be verified that \eqref{eq:MI2} reduces to the scalar-valued inequality
\begin{equation}\label{eq:tau}
\tau^{2}_{k+1}\left[(\alpha-\beta\tau_{k})(\pi_{i}+\tau_{k})+\beta(1-\pi_{i})\tau^{2}_{k}\right]-\tau_{k+1}\tau^{2}_{k}[\alpha(1-\pi_{i})-\beta\pi_{i}]-\tau^{2}_{k}\alpha\pi_{i}\geq 0.
\end{equation}
This defines a quadratic inequality in $\tau_{k+1}$ of the form $c_{i,1}(\tau_{k})\tau^{2}_{k+1}+c_{i,2}(\tau_{k})\tau_{k+1}-c_{i,3}(\tau_{k})\geq 0$, with coefficients 
\begin{align*}
&c_{i,1}(t):=(\alpha-\beta t)(\pi_{i}+t)+\beta(1-\pi_{i})t^{2},\\
&c_{i,2}(t):=t^{2}[\beta\pi_{i}-\alpha(1-\pi_{i})],\\
&c_{i,3}(t):=t^{2}\alpha\pi_{i}
\end{align*}
This suggests to use the recursive step-size policy
\begin{equation}\label{eq:tau2}
\tau_{k+1}=\max_{i\in[d]} \frac{\frac{\tau^{2}_{k}}{2}(1/\pi_{i}-\kappa-1)+\tau_{k}\sqrt{\left(1+(1/\pi_{i}-\kappa)\frac{\tau_{k}}{2}\right)^{2}-\left(\frac{2-\pi_{i}}{\pi_{i}}+2\kappa\right)\frac{\tau^{4}_{k}}{4}}}{1+\tau_{k}(1/\pi_{i}-\kappa)-\kappa\tau^{2}_{k}}.
\end{equation}
We study the qualitative properties of the so produced sequence $(\tau_{k})_{k}$ in detail in Appendix \ref{app:parameters}, under the following assumptions:
\begin{assumption}\label{ass:Lambda}
For all $i\in[d]$ we have $\Lambda_{i}=L_{i}\bm{I}_{m_{i}}$. 
\end{assumption}
\begin{assumption}\label{ass:Usample}
The sampling is uniform: $\pi_{i}\equiv \pi\in(0,1)$ for all $i\in[d]$. 
\end{assumption}
Uniform sampling is a very common sampling scheme employed in the literature. It contains as special cases the single coordinate activation scheme (i.e. $\pi_{i}=\frac{1}{d}$), and the uniform sampling scheme. In particular, the frequently employed $m$-nice sampling (cf. Section \ref{sec:sampling}) is contained in this framework. Under this assumption we prove that if $\tau_{0}$ is chosen sufficiently small, then the sequence $(\tau_{k})_{k}$ exhibits the same qualitative behaviour as a Nesterov accelerated method \cite{Nes18,dAspremont:2021aa}. This is summarised in Lemma \ref{lem:stepsizes}, whose proof is provided in Appendix \ref{app:parameters}.
\revise{
\begin{lemma}
\label{lem:stepsizes} 
Let Assumptions \ref{ass:coupling}, \ref{ass:Lambda} and \ref{ass:Usample} hold, and consider the step size sequences $(\sigma_{k})_{k},(\tau_{k})_{k}$ constructed recursively via eq. \eqref{eq:tau2}, with initial condition $\tau_{0}\in(0,1/\kappa)$. Then, $(\tau_{k})_{k}$ is monotonically decreasing and satisfies 
$$
\tau_{k}\geq \frac{2\tau_{0}}{(1+\kappa-1/\pi)\tau_{0}k+2}\qquad\forall k\geq 0.
$$
In particular, $\sigma_{k}=O(k)$ and $S_{k}=O(k^{2})$.
\end{lemma}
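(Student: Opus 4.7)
Under Assumption \ref{ass:Usample}, the max in \eqref{eq:tau2} disappears, so $\tau_{k+1}$ is the unique positive root of the quadratic
\[
c_1(\tau_k)\,\tau_{k+1}^2+c_2(\tau_k)\,\tau_{k+1}-c_3(\tau_k)=0,
\]
with $c_1(t)=(\alpha-\beta t)(\pi+t)+\beta(1-\pi)t^2$, $c_2(t)=t^2[\beta\pi-\alpha(1-\pi)]$, and $c_3(t)=t^2\alpha\pi$. The first step is to verify well-posedness: writing $\kappa=\beta/\alpha$ and using the initial bound $\tau_0<1/\kappa$, I will show by induction that $c_1(\tau_k)>0$ and $c_3(\tau_k)>0$, so that the quadratic has exactly one positive root, and that $\tau_{k+1}<1/\kappa$ is preserved along iterations. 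The fact that $\kappa\geq 1/\pi$ (noted in the remark before the lemma) is useful here to sign the coefficient of $\tau_k^2$ in the denominator of \eqref{eq:tau2}.

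Next, I prove monotonicity $\tau_{k+1}\leq\tau_k$. The clean way is to substitute the candidate value $\tau_{k+1}=\tau_k$ into the quadratic and compute the sign of $c_1(\tau_k)\tau_k^2+c_2(\tau_k)\tau_k-c_3(\tau_k)$. A direct expansion using the definitions of $c_1,c_2,c_3$ and the identity $\beta=\alpha\kappa$ reduces this quantity to $\alpha\tau_k^2(1-\kappa\tau_k)\geq 0$ under the invariant $\tau_k\leq 1/\kappa$, showing that the positive root of the quadratic lies at or below $\tau_k$.

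The core of the proof is the explicit lower bound. Following the standard Nesterov trick, I pass to the reciprocal variable $u_k:=1/\tau_k$. Dividing the defining quadratic $c_1(\tau_k)\tau_{k+1}^2+c_2(\tau_k)\tau_{k+1}=c_3(\tau_k)$ by $\tau_k^2\tau_{k+1}^2$ and simplifying with $\beta=\alpha\kappa$ yields a relation of the form
\[
\alpha\pi\,u_k^2=\bigl[(\alpha-\beta\tau_k)(\pi+\tau_k)+\beta(1-\pi)\tau_k^2\bigr]\,u_{k+1}^2+\bigl[\beta\pi-\alpha(1-\pi)\bigr]\,u_{k+1}.
\]
After collecting terms and discarding non-negative contributions (using $\tau_k\leq 1/\kappa$ and $\kappa\geq 1/\pi$), this reduces to the clean subadditive inequality
\[
u_{k+1}\leq u_k+\tfrac{1}{2}(1+\kappa-1/\pi),
\]
which telescopes to $u_k\leq u_0+\tfrac{k}{2}(1+\kappa-1/\pi)$, i.e.\ the claimed lower bound on $\tau_k$. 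The corollaries $\sigma_k=\alpha/\tau_k-\beta=O(k)$ and $S_k=\sum_{t\leq k}\sigma_t=O(k^2)$ follow immediately.

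The main obstacle is the algebraic reduction to the subadditive inequality $u_{k+1}-u_k\leq (1+\kappa-1/\pi)/2$ with the exact constant claimed; getting the sharp constant requires careful bookkeeping of the cross terms and exploiting the sign of $\beta\pi-\alpha(1-\pi)=\alpha(\kappa\pi-1+\pi)$ under $\kappa\geq 1/\pi$. Once this inequality is in hand, monotonicity and positivity of $(\tau_k)_k$ close the argument, and the growth rates of $\sigma_k$ and $S_k$ are a direct calculation.
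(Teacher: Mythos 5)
Your plan is correct and follows the same overall strategy as the paper's Appendix \ref{app:parameters}: reduce \eqref{eq:tau2} under uniform sampling to a single scalar quadratic in $\tau_{k+1}$, establish positivity of the leading coefficient on $(0,1/\kappa)$, prove monotonicity, and then extract the $O(1/k)$ lower bound from a reciprocal-increment argument. The two places where you genuinely deviate are both fine and arguably cleaner. First, for monotonicity you evaluate the quadratic at the candidate $\tau_{k+1}=\tau_k$; the expansion indeed collapses (to $\alpha\pi\tau_k^{3}(1-\kappa\tau_k)\geq 0$, not $\alpha\tau_k^{2}(1-\kappa\tau_k)$, but only the sign matters), whereas the paper instead rewrites the recursion in the variable $s_k=(1+\delta)\tau_k$ with $\delta=\kappa-1/\pi$ and $d_k=1-\delta\tau_k-\kappa\tau_k^{2}$ as $s_{k+1}=2s_k/(s_k+\sqrt{s_k^{2}+4d_k})$ and reads off $s_{k+1}/s_k\leq 1$ from $d_k>0$. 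Second, for the lower bound you aim directly at the increment inequality $1/\tau_{k+1}-1/\tau_k\leq\tfrac12(1+\kappa-1/\pi)$, while the paper passes through the Tseng-type relation $\frac{1-s_{k+1}}{s_{k+1}^{2}}\leq\frac{1}{s_k^{2}}$ and cites \cite{tseng08apgm}. Your route does close: from the explicit recursion one gets $\frac{1}{s_{k+1}}-\frac{1}{s_k}=\frac12+\frac{\sqrt{s_k^{2}+4d_k}-2}{2s_k}$, so the increment bound is exactly the condition $s_k^{2}\leq 4(1-d_k)=4(\delta\tau_k+\kappa\tau_k^{2})$, i.e.\ $(1+\delta)^{2}\tau_k\leq 4\delta+4\kappa\tau_k$; since $\kappa=\delta+1/\pi\geq\delta+1$ this reduces to $(\delta-3)(\delta+1)\tau_k\leq 4\delta$, which holds trivially for $\delta\leq 3$ and follows from $\tau_k\leq\tau_0<1/\kappa\leq 1/(\delta+1)$ otherwise. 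So the "main obstacle" you flag is surmountable with exactly the bookkeeping you describe, and telescoping gives the stated bound with the sharp constant; the conclusions $\sigma_k=O(k)$ and $S_k=O(k^{2})$ then follow from the coupling \eqref{eq:coupling} as in the paper. The only thing to be careful about when writing this up is the order of the induction: you need $\tau_k<1/\kappa$ to guarantee $c_1(\tau_k)>0$ (hence a well-defined positive root), and monotonicity then propagates the invariant, so the two claims must be established jointly in one inductive step rather than sequentially.
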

}
We therefore can prove accelerated rates for our scheme in the strongly convex case.\\
\begin{theorem}\label{thm:sconvex}
Suppose that Assumptions \ref{ass:SaddleExists}-\ref{ass:Usample} apply. Consider the sequence $(\bm{z}^{k},\bm{w}^{k},\xx^{k})_{k\geq 0}$ generated by Algorithm \ref{alg:RBCD} with the following step size policy constructed via eqs. 
\eqref{eq:coupling}, \eqref{eq:alpha}, \eqref{eq:beta}. Then, the following holds: 
\begin{itemize}
\item[(i)] We have $h(\xx^{k})-h(\xx^{*})=o(k^{-2})$ and $h(\bm{w}^{k})-h(\xx^{*})=O(k^{-4})$ a.s. 
\item[(ii)] $(\xx^{k})_{k}$ and $(\bm{w}^{k})_{k}$ converge a.s. to the solution $\mathcal{X}^{\ast}=\{\bar{\xx}\}$.
\item[(iii)] $\abs{\Psi(\bm{w}^{k})-\Psi(\xx^{*})}=O(k^{-2})$ a.s. 
\end{itemize}
\end{theorem}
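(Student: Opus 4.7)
The plan is to mirror the convex-case argument of Theorem \ref{thm:convex}, but exploit the faster growth $\sigma_{k}=\Theta(k)$, $S_{k}=\Theta(k^{2})$ guaranteed by Lemma \ref{lem:stepsizes}. First I would check that the step-size sequences produced via \eqref{eq:coupling}, \eqref{eq:alpha}, \eqref{eq:beta}, \eqref{eq:tau2} satisfy the two matrix inequalities \eqref{eq:MI1} and \eqref{eq:MI2}: inequality \eqref{eq:MI1} holds by the very construction of $\alpha$ and $\beta$ (passage \eqref{eq:MI1-1}), while inequality \eqref{eq:MI2}, under Assumption \ref{ass:Usample}, reduces to the scalar quadratic \eqref{eq:tau} solved by the recursion \eqref{eq:tau2}. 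With both conditions in place, Lemma \ref{lem:lyapunovRS} provides the descent inequality \eqref{eq:martingale} with a non-negative residual on the right-hand side.

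Next I would establish a uniform lower bound on $V_{k}(\xx^{\ast})$, repeating verbatim the calculation of Lemma \ref{lem:LBF}: that argument only relies on Assumption \ref{ass:SaddleExists} and inequality \eqref{eq:LBw}, and yields both $V_{k}(\xx^{\ast})\geq -\delta^{2}/2$ and $S_{k-1}(F_{k}-\Psi(\xx^{\ast}))\geq \tfrac{1}{2}S_{k-1}^{2}(h(\bm{w}^{k})-h(\xx^{\ast}))-\delta^{2}/2$. Consequently, $\mathcal{E}_{k}:=V_{k}(\xx^{\ast})+\delta^{2}/2$ is a non-negative almost-supermartingale, and the Robbins--Siegmund Lemma \ref{lem:RS} delivers a.s.\ convergence of $\mathcal{E}_{k}$ together with $\sum_{k}\sigma_{k}^{2}(h(\xx^{k})-h(\xx^{\ast}))<\infty$ a.s.  Since $\sigma_{k}=\Theta(k)$, the general term of this series already forces $k^{2}(h(\xx^{k})-h(\xx^{\ast}))\to 0$, proving the $o(k^{-2})$ statement of (i).  The companion bound $h(\bm{w}^{k})-h(\xx^{\ast})=O(k^{-4})$ then drops out of a.s.\ boundedness of $\mathcal{E}_{k}$ combined with the lower bound above and $S_{k-1}=\Theta(k^{2})$.

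For (iii), taking expectations in \eqref{eq:martingale}, telescoping, and using $\hat{\Psi}_{k}\geq \Psi(\bm{w}^{k})$ (Lemma \ref{lem:average}) yields the upper bound $\Psi(\bm{w}^{k})-\Psi(\xx^{\ast})\leq O(1/S_{k-1})=O(k^{-2})$, which I would promote to an a.s.\ bound by directly exploiting convergence of $\mathcal{E}_{k}$; the matching lower bound $\Psi(\bm{w}^{k})-\Psi(\xx^{\ast})\geq -\delta\sqrt{h(\bm{w}^{k})-h(\xx^{\ast})}=O(k^{-2})$ comes from \eqref{eq:LBw} and part (i). For (ii), Assumption \ref{ass:strongconvex} makes $\Psi$ strongly convex, so $\X^{\ast}=\{\bar{\xx}\}$ is a singleton. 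Because $\bm{B}=\bm{P}^{-2}\Upsilon$ gives $\bm{W}_{k}\succeq (\sigma_{k}/\tau_{k})\Upsilon\succeq c\,k^{2}\bm{I}$ for some $c>0$, the boundedness of $V_{k}(\xx^{\ast})$ implies $\norm{\xx^{k}-\bar{\xx}}^{2}=O(k^{-2})$ a.s., hence $\xx^{k}\to\bar{\xx}$; convergence of $(\bm{w}^{k})_{k}$ to the same limit then follows from \eqref{eq:waverage} and the Silverman--Toeplitz theorem, exactly as in Theorem \ref{thm:convex}(ii). The main obstacle in this program is really encapsulated in Lemma \ref{lem:stepsizes}: verifying that the recursion \eqref{eq:tau2} is well-defined (non-negative discriminant), monotonically decreasing, and delivers the precise rate $\tau_{k}=\Theta(1/k)$ on which both the $o(k^{-2})$ and $O(k^{-4})$ statements hinge; the rest is a careful but routine specialisation of the convex argument to the strongly convex metric $\bm{W}_{k}$.
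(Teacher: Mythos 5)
Your proposal is correct and follows the paper's proof almost step for step: the same Lyapunov function $V_{k}$, the same lower bound from Lemma \ref{lem:LBF}, the Robbins--Siegmund argument applied to $\mathcal{E}_{k}=V_{k}(\xx^{\ast})+\delta^{2}/2$ to get $\sigma_{k}^{2}(h(\xx^{k})-h(\xx^{\ast}))\to 0$ and hence the $o(k^{-2})$ rate, the bound \eqref{eq:convex1} combined with $S_{k}=\Theta(k^{2})$ for the $O(k^{-4})$ claim, and the telescoped inequality plus $\hat{\Psi}_{k}\geq\Psi(\bm{w}^{k})$ and \eqref{eq:LBw} for part (iii). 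The one place where you genuinely depart from the paper is part (ii): the paper simply re-runs the accumulation-point/subsequence argument of Theorem \ref{thm:convex} and then invokes uniqueness of the minimiser, whereas you observe that $\bm{B}=\bm{P}^{-2}\Upsilon$ and $\sigma_{k}/\tau_{k}=\Theta(k^{2})$ give $\bm{W}_{k}\succeq c\,k^{2}\bm{I}$ (using $\Upsilon\succ 0$), so that boundedness of $V_{k}(\bar{\xx})$ directly forces $\norm{\xx^{k}-\bar{\xx}}^{2}=O(k^{-2})$ a.s. This is a cleaner and strictly stronger conclusion -- it yields a quantitative rate on the last iterate rather than mere convergence -- and it sidesteps the subsequence bookkeeping entirely; the only thing it requires beyond what you already have is the lower bound $S_{k-1}(F_{k}-\Psi(\bar{\xx}))\geq-\delta^{2}/2$, which you correctly import from Lemma \ref{lem:LBF}. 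Your remark that the real technical burden sits in Lemma \ref{lem:stepsizes} (well-posedness and $\tau_{k}=\Theta(1/k)$ for the recursion \eqref{eq:tau2}) is exactly right; note that for the $o(k^{-2})$ conclusion one needs the lower bound $\sigma_{k}\gtrsim k$, i.e.\ the upper bound $\tau_{k}\lesssim 1/k$, which your $\Theta(k)$ phrasing handles more carefully than the paper's stated $\sigma_{k}=O(k)$.
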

\begin{proof}
\noindent 
(i) If a Lagrange multiplier exists then Lemma \ref{lem:LBF} still applies and guarantees that the function $V_{k}(\xx^{\ast})+\frac{\delta^{2}}{2}\geq 0$ for all $k\geq 0$, and the recursion \eqref{eq:SM} applies, which reads in the present case 
$$
\Ex_{k}[\mathcal{E}_{k+1}]\leq\mathcal{E}_{k}-\sigma^{2}_{k}(h(\xx^{k})-h(\xx^{*}))-\frac{\sigma_{k}}{2\tau_{k}}\norm{\hat{\xx}^{k+1}-\xx^{k}}^{2}_{\bm{P}^{-1}\Upsilon-\tau_{k}(\sigma_{k}\Xi+\Lambda)}.
$$
Since $\bm{P}^{-1}\Upsilon-\tau_{k}(\sigma_{k}\Xi+\Lambda)\succeq 0$, we can upper bound the right-hand side of the above display as 
\begin{equation}\label{eq:SM2}
\Ex_{k}[\mathcal{E}_{k+1}]\leq\mathcal{E}_{k}-\sigma^{2}_{k}(h(\xx^{k})-h(\xx^{*}))-\frac{\sigma_{k}}{2\tau_{k}}\norm{\hat{\xx}^{k+1}-\xx^{k}}^{2}_{\bm{P}^{-1}\Upsilon}.
\end{equation}
The supermartingale convergence theorem \revise{(Lemma \ref{lem:RS})} implies that 
$$
\lim_{k\to\infty}\sigma^{2}_{k}(h(\xx^{k})-h(\xx^{*}))=0 \text{ and }\lim_{k\to\infty}\frac{\sigma_{k}}{2\tau_{k}}\norm{\hat{\xx}^{k+1}-\xx^{k}}^{2}_{\bm{P}^{-1}\Upsilon}=0.
$$
Since $\sigma_{k}=O(k)$, it follows $h(\xx^{k})-h(\xx^{\ast})=o(k^{-2})$. Furthermore, the sequence $(\xx^{k})_{k}$ is bounded and eq. \eqref{eq:convex1} yields $h(\bm{w}^{k})-h(\xx^{\ast})= O(k^{-4})$, since $S_{k}=O(k^{2})$ as shown in Appendix \ref{app:parameters}. Moreover, $(\bm{w}^{k})_{k}$ and $(\xx^{k})_{k}$ share the same limit. \\
\noindent 
(ii) We can repeat the arguments of Theorem \ref{thm:convex}, to conclude that accumulation points of $(\hat{\xx}^{k})_{k}$ and $(\xx^{k})_{k}$ converge to the same random variable with values in $\mathcal{X}^{\ast}$. However, since the problem is strongly convex, we must have $\mathcal{X}^{\ast}=\{\bar{\xx}\}$ for some $\bar{\xx}\in\mathcal{X}$. Hence, the sequences actually converge and the common limit point is $\bar{\xx}$ a.s. 
\noindent 
(iii) We next show convergence rates in terms of the objective function gap. Iterating eq. \eqref{eq:SM2}, it follows $\Ex[V_{k}(\xx^{\ast})]\leq \frac{1}{2}\norm{\xx^{0}-\xx^{\ast}}^{2}_{\bm{W}_{0}}$. This is equivalent to 
$$
\Ex\left[\frac{1}{2}\norm{\xx^{k}-\xx^{\ast}}^{2}_{\bm{W}_{k}}+S_{k-1}(\hat{\Psi}_{k}-\Psi(\xx^{\ast}))+S^{2}_{k-1}(h(\bm{w}^{k})-h(\xx^{\ast}))\right]\leq \frac{1}{2}\norm{\xx^{0}-\xx^{\ast}}^{2}_{\bm{W}_{0}}.
$$
In particular, 
$$
\Ex[\hat{\Psi}_{k}-\Psi(\xx^{\ast})]\leq \frac{1}{2S_{k-1}}\norm{\xx^{0}-\xx^{\ast}}^{2}_{\bm{W}_{0}}=O(k^{-2}).
$$
Furthermore, $\hat{\Psi}_{k}\geq\Psi(\bm{w}^{k})$, so that via eq. \eqref{eq:LBw} we obtain
\begin{align*}
\Ex[\Psi(\bm{w}^{k})-\Psi(\xx^{\ast})]\leq O(k^{-2})\text{ and }\Ex[\Psi(\bm{w}^{k})-\Psi(\xx^{\ast})]\geq -\delta\Ex\left[\sqrt{h(\bm{w}^{k})-h(\xx^{\ast})}\right].
\end{align*}
Combining these bounds, it follows $\Ex[\Psi(\bm{w}^{k})-\Psi(\xx^{\ast})]=O(k^{-2})$.
\end{proof}

\section{Application to power systems}
\label{sec:application}
In this section we apply Algorithm \ref{alg:RBCD} to the distributed coordination of an energy grid in an AC-optimal power flow formulation. Specifically, we illustrate how our block-coordinate descent method provides an efficient way to replicate the transmission-level locational marginal prices to the distribution level.\footnote{A preliminary version of this application is studied in our conference paper \cite{DLMP-CDC}. We refer to this paper for background and further motivation.} 

\subsection{Power flow model}
Consider a radial network with $\nodenb$ buses $\Node=\{1,\dots,\nodenb\}$, excluding the slack node $0$. Hence, the distribution network assumes the structure of a tree graph and we identify the transmission lines with the label of the parental node. The network is optimised over a time window $\Tim=\{1,\dots,\timlast\}$. We use $\napcn{\node}=(\napcnt{\node}{1},\dots,\napcnt{\node}{\timlast})$ and $\nrpcn{\node}=(\nrpcnt{\node}{1},\dots,\nrpcnt{\node}{\timlast})$ to denote active and reactive power net consumption at bus $n$ at each time point $t\in\Tim$. We let $\imaginary=\sqrt{-1}$ and write a complex number $z\in\mathbb{C}$ as $\text{Re}(z)+\imaginary\text{Im}(z)$. Thus, $\napcnt{\node}{\tim}<0$ means that there is production of energy at bus $n$ at time $t$. At the slack node $\node=0$, we assume that power will only be generated and there is no consumption, i.e. $\napcnt{0}{\tim}\leq 0$. At node $n$, we denote by $f_{n}=(f_{n,t})_{t\in\Tim}$ and $g_{n}=(g_{n,t})_{t\in\Tim}$ the real and reactive power flows, so that $f_{n}+\imaginary g_{n}$ is the complex power injected into node $n$. By Ohm's law, we have $g_{n}=\text{Im}(V_{n}\bar{I}_{n})$ and $f_{n}=\text{Re}(V_{n}\bar{I}_{n})$. We let $\ell_{n}=(\ell_{n,t})_{t\in\Tim}=\abs{V_{n}}$ is the squared current magnitude, $R_{n}$ and $X_{n}$ denotes the series resistance and reactance. Hence, $z_{n}=R_{n}+\imaginary X_{n}$ is the series impedance, and $y_{n}=z_{n}^{-1}$ the shunt admittance. 

The load flow equations using the BFM \cite{BarWu89c,Mol19}, without explicit consideration of transmission tap ratios, are as follows:

\begin{subequations}\label{branchflowconstraints}
\newcommand{\spl}{}%
\begin{align}
 \label{flowconservationactive}
\spl & \apfn{\node} \,{-}\, \!\!\sum\limits_{\altnode:\altnodem=\node}\!\! ( \apfn{\altnode} \,{-}\, \resn{\altnode}\curn{\altnode} ) \,{+}\, \napcn{\node} \,{+}\, \conn{\node}\voln{\node} \,{=}\, 0
\hspace{-20mm} &[\dlmpn{\node}]
  \\
  \label{flowconservationreactive} 
\spl & \rpfn{\node} \,{-}\, \!\!\sum\limits_{\altnode:\altnodem=\node}\!\! ( \rpfn{\altnode} \,{-}\, \resn{\altnode}\curn{\altnode} ) \,{+}\, \nrpcn{\node} \,{-}\, \susn{\node}\voln{\node} \,{=}\, 0 \hspace{-20mm}& [\dlmqn{\node}]
  \\
   \label{Ohm} 
\spl &\voln{\node} \,{-}\, 2 (\resn{\node}\apfn{\node} \,{+}\, \rean{\node}\rpfn{\node}) \,{+}\, (\resn{\node}^2 \,{+}\, \rean{\node}^2) \,\curn{\node} \,{=}\, \voln{\nodem} 
\hspace{-4mm}& 
  \\
 \label{powerflowrelaxed}
\spl &  \apfnt{\node}{\tim}^2 +\rpfnt{\node}{\tim}^2 \leq \volnt{\node}{\tim} \curnt{\node}{\tim}
   &\forall \tim\in\Tim
  \\
  \label{powerflowbound}
\spl &  \apfnt{\node}{\tim}^2 +\rpfnt{\node}{\tim}^2 \leq \ubpfn{\node}^2
  & \forall \tim\in\Tim 
  \\
  \label{powerflowboundm}
\spl & (\apfnt{\node}{\tim}-\resn{\node}\curnt{\node}{\tim} )^2 +(\rpfnt{\node}{\tim}-\rean{\node}\curnt{\node}{\tim} )^2 \leq \ubpfn{\node}^2 \hspace{-20mm}&  \forall\tim\in\Tim
  \\
 \label{voltagebound}
\spl & \lbvoln{\node}\leq \volnt{\node}{\tim} \leq \ubvoln{\node} ,
  & \forall  \tim\in\Tim
 \end{align}
\end{subequations}  
where
\begin{itemize}
\item 
$\voln{\node}=(\volnt{\node}{1},\dots,\volnt{\node}{\timlast})$ and~$\voln{\nodem}$ are the squared voltage magnitudes at buses~$\node$ and~$\nodem$,
\item
$\curn{\node}$ is the squared current magnitude on branch $(\node,\nodem)$,
\item
$\apfn{\node}$ and $\rpfn{\node}$ are the active and the reactive parts of the power flow over line $(\node,\nodem)$,
\item
$\resn{\node}$ and~$\rean{\node}$ are the resistance and the reactance of branch $(\node,\nodem)$,
\item
$\conn{\node}$ and~$\susn{\node}$ are the line conductance and susceptance at~$\node$.
\end{itemize}
Equation \eqref{flowconservationactive} and \eqref{flowconservationreactive} are the active and reactive flow conservation equations, ~\eqref{Ohm} is an expression of Ohm's law for the branch $(\node,\nodem)$, and~\eqref{powerflowrelaxed} is a SOCP relaxation of the definition of the power flow, which implies $\apfnt{\node}{\tim}^2 +\rpfnt{\node}{\tim}^2 = \volnt{\node}{\tim} \curnt{\node}{\tim}$. There exist sufficient conditions under which the optimization problem subject to~\eqref{branchflowconstraints} remains exact, see \cite{farivar13,BienstockCascades,gan15}.
Equations~\eqref{powerflowbound} and  \eqref{powerflowboundm} are limitations on the  squared power flow magnitude on $(\node,\nodem)$, and~\eqref{voltagebound} gives lower and upper bounds on the voltage at~$\node$. For the coupling flow conservation laws, dual variables are attached, which are the DLMPs corresponding to active and reactive power.

For later reference, we point out that the network flow constraints \eqref{flowconservationactive}-\eqref{flowconservationreactive} can be compactly summarised
ed as $\bm{A}_{0}\xx_{0}+\sum_{a}\bm{A}_{a}\xx_{a}=\bm{b}$ for suitably defined matrices $\bm{A}_{0},\bm{A}_{a}$ and right-hand side vector $\bm{b}$. However, for our computational scheme to work, we do not need to assume that the linear constraint holds exactly. Instead, we assume that the linear relation $\bm{A}_{0}\xx_{0}+\sum_{a}\bm{A}_{a}\xx_{a}=\bm{b}$ holds inexactly. Specifically, motivated by data-driven approaches for power flow models \cite{Mezghani:2020aa}, we assume that there exists a random variable $\xi$ with bounded support such that 
$$
\bm{A}_{0}\xx_{0}+\sum_{a}\bm{A}_{a}\xx_{a}=\bm{b}+\xi,\text{ and }\frac{1}{2}\norm{\bm{A}_{0}\xx_{0}+\sum_{a}\bm{A}_{a}\xx_{a}-\bm{b}}^{2}\leq\delta.
$$
In such inexact regimes, conventional deterministic OPF solvers are not applicable, whereas Algorithm \ref{alg:PPDLMP} is designed to handle such scenarios.

\subsection{Load aggregators\acused{LA}}
The set of buses~$\Node$ is partitioned into a collection~$(\Nodea{\aggregator})_{\aggregator\in\Aggregator}$ of subsets, such that each node subset~$\Nodea{\aggregator}$ is managed by a \acl{LA}~$\aggregator\in\Aggregator$. Each \ac{LA} controls the flexible net power consumption ($p_{n,t}$) and generation at each node $\node\in\Nodea{\aggregator}$, given at time~$\tim$ by
\begin{subequations}\label{aggregatorconstraints}
\begin{equation}\label{netactivepowerconsumed}
 \napcnt{\node}{\tim} = \apcnt{\node}{\tim} -\appnt{\node}{\tim}
 ,\qquad 
 \nrpcnt{\node}{\tim} = \rpcnt{\node}{\tim} -\rppnt{\node}{\tim}
 ,
\end{equation}
\noeqref{netactivepowerconsumed}%
for all $\node\in\Node$ and $\tim\in\Tim.$ $\apcnt{\node}{\tim}\geq 0$ is the consumption part and $\appnt{\node}{\tim}\geq 0$ is the production part of the power profile. Power consumption and production at the nodes are made flexible by the presence of deferrable loads (electric vehicles, water heaters) and \acdefp{DER}. The consumption at each node $\node\in\Node$ must satisfy a global energy demand $\lbmapcn{\node}$ over the full time window,
\begin{equation} \label{totalactivepowerconsumed}
 \sum_{\tim\in\Tim}\apcnt{\node}{\tim}\geq \lbmapcn{\node}
 , \qquad \forall \node\in\Node.
\end{equation}
\noeqref{totalactivepowerconsumed}%
Consumption and production are also constrained by power bounds and active to reactive power ratio:
\begin{align}\label{boundactivepowerconsumed}
& \lbapcnt{\node}{\tim} \leq \apcnt{\node}{\tim}\leq \ubapcnt{\node}{\tim}
 , &\forall \node\in\Node,\ \forall \tim\in\Tim,
\\
\label{reactivepowerconsumed} 
&\rpcnt{\node}{\tim} = \ratiopcn{\node}\apcnt{\node}{\tim}
 , & \forall \node\in\Node,\ \forall \tim\in\Tim,
 \\
\label{activepowerproduced}
&0 \leq \appnt{\node}{\tim} \leq \ubapc
 , & \forall \node\in\Node,\ \forall \tim\in\Tim,
\\
\label{reactivepowerproduced}
&\lbratioappnt{\node}{\tim}\appnt{\node}{\tim} \leq \rppnt{\node}{\tim} \leq \ubratioappnt{\node}{\tim}\appnt{\node}{\tim}
 , & \forall \node\in\Node,\ \forall \tim\in\Tim.
\end{align}
\noeqref{boundactivepowerconsumed}%
\noeqref{reactivepowerconsumed}%
\noeqref{activepowerproduced}%
\noeqref{reactivepowerproduced}%
\end{subequations}
Constraints \eqref{netactivepowerconsumed}-\eqref{reactivepowerproduced} define the feasible set $\XLA{\aggregator}$ of \ac{LA} decisions, containing vectors $\xLA{\aggregator}=(\napcn{\node},\nrpcn{\node})_{\node\in\Nodea{\aggregator}}$. Both, consumption and production, must be scheduled by the \ac{LA}, taking into account the current spot market prices, and other specific local factors characterising the private objectives of the \ac{LA}. Formally, there is a convex cost function $\costLA{\aggregator}{\xLA{\aggregator}}$ which the \ac{LA} would like to unilaterally minimise, subject to private feasibility $\xLA{\aggregator}\in\XLA{\aggregator}$.

\subsection{The distribution system operator}
In order to guarantee stability of the distribution network, the \ac{DSO} takes the individual aggregators' decisions into account and adjusts the power flows so that the flow conservation constraints \eqref{flowconservationactive}-\eqref{flowconservationreactive}, together with the SOCP constraints \eqref{Ohm}-\eqref{voltagebound}, are satisfied. Let $\xDSO=(\napcn{0},\nrpcn{0},\vapf,\vrpf,\vvol,\vcur)$ denote the vector of the variables controlled by the \ac{DSO}, and define the \ac{DSO}'s feasible set 
$\XDSO = \{ \xDSO \vert \eqref{Ohm}-\eqref{voltagebound}\text{ hold for }\node\in\Node \}. $ Then, the set of \ac{DSO} decision variables inducing a physically meaningful network flow for a given tuple of \ac{LA} decisions $\xLAs$ is described as 
$
\mathcal{F}(\xLAs)=\{\xDSO\in\XDSO\vert\eqref{flowconservationactive}-\eqref{flowconservationreactive}\text{ hold for }\xLAs\}.
$
Denoting the \ac{DSO} cost function $\phi_{0}(\xDSO)$, we arrive at the \ac{DSO}'s decision problem 
$$
\Psi(\xLAs)=\min\{\phi_{0}(\xDSO)\vert \xDSO\in\mathcal{F}(\xLAs)\},
$$
This represents the smallest costs to the \ac{DSO}, given the profile of flexible net consumption and generation at each affiliated node $n\in\Node_{\aggregator}$.

\subsection{A \acl{PPDLMP} }
The \ac{PPDLMP}, described in Algorithm \ref{alg:PPDLMP}, asks the \ac{DSO} to adjust \acsp{DLMP} based on the prevailing plans reported by the \acp{LA}. Once the price update is completed, a single \ac{LA} is selected at random to adapt the power profile within the subnetwork they manage. The local update of the \ac{LA} results in bid vector $w^{k}$, which will be fed into the \ac{DSO} final computational step to perform dispatch. 

\begin{algorithm}[t]
\caption{\acl{PPDLMP} (PPDLMP)}
\label{alg:PPDLMP}
\SetAlgoNoLine%
\SetKwInOut{Parameters}{\texttt{\textbf{Parameters}}}
\SetKw{Output}{\texttt{\textbf{Output:}}}
\SetKwFor{Initatdo}{\texttt{Initialization at}}{\texttt{:}}{}
\SetKw{phantomInit}{\phantom{\textbf{Initialization:}}}
\SetKwFor{For}{\texttt{for}}{\texttt{do}}{}
\SetKwFor{Atdo}{\texttt{at}}{\texttt{do}}{}
\Parameters{$p=\abs{\mathcal{A}}$, $\sigma>0$, $\bm{B}_{0}$, $\bm{B}_{a}$
}
\Initatdo{\texttt{\textup{\textbf{each aggregator $a\in\mathcal{A}$}}}}{
$\xx_{0}^{0}\in\mathcal{X}_{0}$\;
\texttt{\textbf{send bid}}~$u_{a}=\bm{A}_{a}\xx^{0}_{a}-b_{a}$ \texttt{\textbf{to the  \ac{DSO}}}
}
\Initatdo{\texttt{\textbf{the \ac{DSO}}}}{ 
$\xx^{0}_{0}\in\mathcal{X}_{0},\;\bm{v}^{0}= \sigma\sum_{a\in\mathcal{A}}\bm{u}_{a},\;\bm{y}^{0}= \bm{v}^{0}+\sigma(\bm{A}_{0}\xx^{0}_{0}-b_{0})$\;
}
%
\smallskip
\SetAlgoLined
%
\For{$\iter=0,1,2,\dots$}{
%
\Atdo{\texttt{\textbf{the \ac{DSO}}}}{
$\xx^{k+1}_{0}=\arg\min_{\tilde{\xx}_{0}\in\mathcal{X}_{0}}\{\inner{\nabla\phi_{0}(\xx_{0}^{k})+\bm{A}_{0}^{\top}\bm{y}^{k}}{\tilde{\xx}_{0}}+\frac{1}{2}\norm{\tilde{\xx_{0}}-\xx^{k}_{0}}^{2}_{\bm{B}_{0}}\}$
 \; \label{algorithm:PGDSO}
}
%
\Atdo{\texttt{\textbf{\ac{LA}~$\aggregator$ drawn uniformly at random}}}{
 %
\texttt{\textbf{receive \acs{DLMP}}}~$\dualk{\iter}$ \texttt{\textbf{from \ac{DSO}}} \;
\label{algorithm:PGLA}%
$\xx^{k+1}_{a}=\arg\min_{\tilde{\xx}_{a}\in\mathcal{X}_{a}}\{\inner{\nabla\phi_{a}(\xx^{k}_{a})+\bm{A}_{a}^{\top}\bm{y}^{k}}{\tilde{\xx}_{a}}+\frac{p}{2}\norm{\tilde{\xx}_{a}-\xx^{k}_{a}}^{2}_{\bm{B}_{a}}\}
$ \; 
%
$\bm{s}^{k}=\bm{A}_{a}(\xx^{k+1}_{a}-\xx^{k}_{a}) $ \;
}
%
\Atdo{\texttt{\textbf{each other aggregator~$a'\neq a$}}}{
 %
$\xx^{k+1}_{a'}=\xx^{k}_{a'} $ \;
}
%
%
 \Atdo{\texttt{\textbf{the \ac{DSO}}}}{
\texttt{\textbf{receive bid}}~$\bm{s}^{k}$ \texttt{\textbf{from \ac{LA}~$a$}} \;
%
$\bm{y}^{k+1}=\bm{y}^{k}+\sigma[\bm{A}_{0}(2\xx^{k+1}_{0}-\xx^{k}_{0})-b_{0}]+\sigma(p+1)\bm{s}^{k} $  \; \label{algorithm:dualk}
%
$\bm{v}^{k+1}=\bm{v}^{k}+\sigma\bm{s}^{k} $  \;
}
}
 \end{algorithm}
\noindent
From a practical point of view, it is important to point out that, while executing \ac{PPDLMP}, the bus-specific data (like cost function, power profiles, etc.) remain private information. This applies equally to the \ac{DSO} and the \ac{LA}. Coordination of the system-wide behaviour is achieved via exchanging information about \emph{dual variables} only, describing the DLMPs and the expressed bids of the \ac{LA}s. 

\subsection{Convergence of \ac{PPDLMP}}
We deduce the convergence of \ac{PPDLMP} from the analysis of Algorithm \ref{alg:RBCD}. In order to recover the OPF problem, we identify each function $\phi_{i}$ with a cost function of the \ac{DSO} or \ac{LA}, and $r_{i}$ is an indicator function of the feasible set $\XLA{\aggregator}$ and $\XDSO$, respectively. The sampling technology is an example of a uniform sampling of pairs involving the DSO and one aggregator. Specifically, the set of realisations of the sampling is 
$\Sigma=\left\{\{0,a\}:\aggregator\in\{1,\ldots,\cardinality{\Aggregator}\}\equiv\{1,\ldots,\nbaggregators\}\right\}.$ Each pair is realised with the uniform probability $(\Pi)_{0,a}=\frac{1}{p}$. The marginal distribution of the sampling technology is thus given by $\pi_{0}=1$ and $\pi_{a}=1/p$ for $a\in\{1,\ldots,p\}$. The optimisation problem is characterised by $d=p+1$ blocks and weighting matrix $\bm{P}=\blkdiag[\bm{I}_{m_{0}};\ldots; p\bm{I}_{m_{d}}]$, where $m_{0}$ is the dimension of the feasible set of the \ac{DSO}, and $m_{a}$ is the dimension of the feasible set of aggregator $a\in\Aggregator$. Now, define $R(\xx)=r_{0}(\xx_{0})+\sum_{a\in\mathcal{A}}r_{a}(\xx_{a})$, where $r_{0}(\xx_{0})=\delta_{\mathcal{X}_{0}}(\xx_{0})$ and $r_{a}(\xx_{a})=\delta_{\mathcal{X}_{a}}(\xx_{a})$, in which $\delta_{C}$ is the indicator function of a set $C$. We now show that Algorithm \ref{alg:PPDLMP} is a special instantiation of Algorithm \ref{alg:PDA}, which in turn we know to be equivalent to Algorithm \ref{alg:RBCD}.\\
\noindent
If the load aggregator $a$ is chosen at step~$\iter$, Line 5 in Algorithm~\ref{alg:PDA} becomes
\begin{align*}
&\bm{y}^{k+1}=\bm{y}^{k} +  \sigma \bm{A}\bm{P}(\xx^{k+1}-\xx^{k})  + \sigma\bm{u}^{k+1}\\
&=\bm{y}^{k} +  \sigma \bm{A}_{0} (\xx_{0}^{k+1} -\xx^{k}_{0})+  \sigma p\bm{A}_{a}(\xx^{k+1}_{a}-\xx^{k}_{a})+ \sigma \bm{u}^{k+1} \\
&=\bm{y}^{k}+\sigma[\bm{A}_{0}(2\xx^{k+1}_{0}-\xx^{k}_{0})-b_{0}]+\sigma p\bm{A}_{a}(\xx^{k+1}_{a}-\xx^{k}_{a})+\bm{v}^{k+1}
\end{align*}
where we have used the identity $\bm{u}^{k+1}=\bm{A}\xx^{k}-\bm{b}$ and define $\bm{v}^{k}=\sigma\bm{u}^{k}-\sigma(\bm{A}_{0}\xx^{k}_{0}-b_{0})=\sigma\sum_{a\in\mathcal{A}}(\bm{A}_{a}\xx^{k}_{a}-b_{a})$. Exploiting Line 4 in Algorithm~\ref{alg:PDA}, we find that $\bm{v}^{k}$ can be computed locally and inductively by choosing the initial condition $\bm{v}^{0}=\sigma\sum_{a\in\Aggregator}(\bm{A}_{a}\xx^{0}_{a}-b_{a})$ initially, and performing sequential updates 
$$
\bm{v}^{k+1}=\bm{v}^{k}+\sigma\bm{A}_{a}(\xx^{k+1}_{a}-\xx^{k}_{a})=\bm{v}^{k}+\sigma\bm{s}^{k}
$$
where $\bm{s}^{k}=\bm{A}_{a}(\xx^{k+1}_{a}-\xx^{k}_{a})$. This shows
$$
\bm{y}^{k+1}=\bm{y}^{k}+\sigma[\bm{A}_{0}(2\xx^{k+1}_{0}-\xx^{k}_{0})-b_{0}]+\sigma(p+1)\bm{s}^{k}+\bm{v}^{k},
$$
which agrees with Lines 4,7 and 8 of Algorithm \ref{alg:PPDLMP}. 

\subsection{Numerical Results}
\label{sec:Numerics}
We apply Algorithm \ref{alg:PPDLMP} to a realistic 15-bus network example based on the instance proposed in \cite{papavasiliou18}, over a time horizon $\mathcal{T} = \{0, 1\}$. The parameters $(R_n,X_n,S_n,B_n,V_n)$ are those used in~\cite{papavasiliou18}.  We consider variable, flexible active and reactive loads (as opposed to fixed ones, as in~\cite{papavasiliou18}): parameters  $(\lbapcn{\node},\ubapcn{\node},E_n,\tauC_n)_n$  are generated  based on the values of  \cite{papavasiliou18}.
The underlying parameter values are specified in  \Cref{tab:network14param}: line parameters $R_n,X_n,S_n,B_n$, $n\in\N$ are taken from  \cite{papavasiliou18}, $(\lbapcn{\node},\ubapcn{\node},E_n,\tauC_n)_n$ for the \textit{flexible} loads are generated based on the \emph{fixed} load values of \cite{papavasiliou18}.
%
As in \cite{papavasiliou18},  bus $11$ is the only bus to offer renewable production, with $ \bm{\upRE}_{11} \eqd [0.438,0.201]$ and $\rlbRE = \rubRE = 0$ (the renewable production is fully active). 
The bounds $(\uV_n,\oV_n)$ are set to 0.81 and $1.21$ for each  $n\in\N$, while $V_0=1.0$.

We consider a zero cost function for each \ac{LA} ($\phi_a=0$ for each $a\in\A$), meaning that \acp{LA} are indifferent to  consumption profiles for as long as their feasibility constraints are satisfied.
This is a reasonable assumption in practice for certain types of consumption flexibilities (electric vehicles, batteries).
We consider the \ac{DSO} objective
$ \phi(\xx)= \phi_0(\xx_0)=  \sum_{t\in\T}c_t( \pRE_{0t}) + k^\text{loss} \sum_{n,t} R_n \ell_{nt},$ 
with loss penalisation factor $k^\text{loss}=0.001$ and with:
$c_0:p \mapsto 2p + p^2 , \ c_1 : p \mapsto p , $
giving an \textit{expensive} time period and a \textit{cheap} one, which can be interpreted as peak and offpeak periods.
\newcommand{\hhh}{\hspace{-3pt}}
\begin{table}[!ht]
 \centering
\begin{scriptsize}
\setlength{\tabcolsep}{1.5pt}
\begin{tabular}{ccccccccc}
\hline
 $n$ &      $S_{n}$ &      $R_{n}{\cdot}10^3$ &      $X_{n}{\cdot}10^3$ &   $B_{n}{\cdot}10^3$ &       $\uP_{n}$ &               $\oP_{n}$ &     $ E_{n} $&    $\tau^{c}_{n}$ \\
\hline
  1 &  2.000 &     1.0 &   120.0 &     1.1 &   [0.593, 0.256] &    [1.566, 1.539] &  2.213 &  0.234 \\
  2 &  0.256 &    88.3 &   126.2 &     2.8 &       [0.000, 0.000] &        [0.000, 0.000] &  0.000 &  0.000 \\
  3 &  0.256 &   138.4 &   197.8 &     2.4 &   [0.003, 0.011] &     [0.020, 0.035] &  0.047 &  0.418 \\
  4 &  0.256 &    19.1 &    27.3 &     0.4 &   [0.015, 0.013] &    [0.027, 0.019] &  0.033 &  0.249 \\
  5 &  0.256 &    17.5 &    25.1 &     0.8 &   [0.021, 0.024] &    [0.043, 0.053] &  0.072 &  0.251 \\
  6 &  0.256 &    48.2 &    68.9 &     0.6 &   [0.017, 0.001] &    [0.032, 0.037] &  0.039 &  0.251 \\
  8 &  0.256 &    40.7 &    58.2 &     1.2 &   [0.021, 0.009] &     [0.040, 0.039] &  0.049 &  0.251 \\
  7 &  0.256 &    52.3 &    74.7 &     0.6 &  [-0.233,-0.210] &  [-0.173,-0.115] & -0.352 &  0.000 \\
  9 &  0.256 &    10.0 &    14.3 &     0.4 &   [0.008, 0.002] &    [0.032, 0.028] &  0.015 &  0.620 \\
 10 &  0.256 &    24.1 &    34.5 &     0.4 &   [0.004, 0.001] &     [0.024, 0.040] &  0.013 &  0.300 \\
 11 &  0.256 &    10.3 &    14.8 &     0.1 &     [0.010, 0.010] &    [0.015, 0.024] &  0.028 &  0.250 \\
 12 &  0.600 &     1.0 &   120.0 &     0.1 &   [0.243, 0.057] &    [0.642, 0.625] &  0.895 &  0.208 \\
 13 &  0.204 &   155.9 &   111.9 &     0.2 &     [0.001, 0.000] &    [0.003, 0.003] &  0.003 &  0.571 \\
 14 &  0.204 &    95.3 &    68.4 &     0.1 &   [0.015, 0.012] &    [0.032, 0.042] &  0.042 &  0.371 \\
\hline
\end{tabular}
\end{scriptsize}
\caption{Parameters for the 15 buses network based on  \cite{papavasiliou18}}
\label{tab:network14param}
\end{table}
%
%
%

The solution obtained by Algorithm~\ref{alg:PPDLMP} after 2000 iterations is illustrated in  \Cref{fig:resFlows}, which displays the active flows directions as well as the \ac{DLMP} values.
 
\begin{figure}[h]
\begin{subfigure}{0.49\columnwidth} \centering
  \begin{scriptsize}
\begin{tikzpicture}[scale=0.3]
\node [draw,circle] (node0) at (0,0) {0} ;
\node [draw,circle] (node1) at (0.0, -3.5) {1} ;
 \draw [-latex',green,line width=2pt] (node0) -- (node1); 
  
\node [left= 0.1 cm of node1] (res0) {\pbox{2.5cm}{
$3.721$ \\ 
$0.012$ \\ 
}} ;
\node [draw,circle] (node2) at (0.0, -7.0) {2} ;
 \draw [latex'-, red,line width=2pt] (node1) -- (node2); 
  
\node [left= 0.1 cm of node2] (res0) {\pbox{2.5cm}{
$3.603$ \\ 
$0.048$ \\ 
}} ;
\node [draw,circle] (node3) at (0.0, -10.5) {3} ;
 \draw [latex'-, red,line width=2pt] (node2) -- (node3); 
  
\node [left= 0.1 cm of node3] (res0) {\pbox{2.5cm}{
$3.421$ \\ 
$0.092$ \\ 
}} ;
\node [draw,circle] (node4) at (0.0, -14.0) {4} ;
 \draw [-latex',green,line width=2pt] (node3) -- (node4); 
  
\node [left= 0.1 cm of node4] (res0) {\pbox{2.5cm}{
$3.429$ \\ 
$0.093$ \\ 
}} ;
\node [draw,circle] (node5) at (0.0, -17.5) {5} ;
 \draw [-latex',green,line width=2pt] (node4) -- (node5); 
  
\node [left= 0.1 cm of node5] (res0) {\pbox{2.5cm}{
$3.433$ \\ 
$0.094$ \\ 
}} ;
\node [draw,circle] (node6) at (0.0, -21.0) {6} ;
 \draw [-latex',green,line width=2pt] (node5) -- (node6); 
  
\node [left= 0.1 cm of node6] (res0) {\pbox{2.5cm}{
$3.439$ \\ 
$0.096$ \\ 
}} ;
\node [draw,circle] (node8) at (3.5, -14.0) {8} ;
 \draw [latex'-, red, style = dashed ,line width=2pt] (node3) -- (node8); 
  
\node [right= 0.1 cm of node8] (res0) {\pbox{2.5cm}{
$0.004$ \\ 
$0.279$ \\ 
}} ;
\node [draw,circle] (node7) at (3.5, -17.5) {7} ;
 \draw [latex'-, red,line width=2pt] (node8) -- (node7); 
  
\node [below= 0.1 cm of node7] (res0) {\pbox{2.5cm}{
$-0.003$ \\ 
$0.279$ \\ 
}} ;
\node [draw,circle] (node9) at (7.0, -17.5) {9} ;
 \draw [latex'-, red,line width=2pt] (node8) -- (node9); 
  
\node [right= 0.1 cm of node9] (res0) {\pbox{2.5cm}{
$0.003$ \\ 
$0.279$ \\ 
}} ;
\node [draw,circle] (node10) at (7.0, -21.0) {10} ;
 \draw [latex'-, red,line width=2pt] (node9) -- (node10); 
  
\node [right= 0.1 cm of node10] (res0) {\pbox{2.5cm}{
$0.001$ \\ 
$0.279$ \\ 
}} ;
\node [draw,circle] (node11) at (7.0, -24.5) {11} ;
 \draw [latex'-, red,line width=2pt] (node10) -- (node11); 
  
\node [right= 0.1 cm of node11] (res0) {\pbox{2.5cm}{
$-0.0$ \\ 
$0.279$ \\ 
}} ;
\node [draw,circle] (node12) at (3.5, -3.5) {12} ;
 \draw [-latex',green,line width=2pt] (node0) -- (node12); 
  
\node [right= 0.1 cm of node12] (res0) {\pbox{2.5cm}{
$3.719$ \\ 
$0.001$ \\ 
}} ;
\node [draw,circle] (node13) at (3.5, -7.0) {13} ;
 \draw [-latex',green,line width=2pt] (node12) -- (node13); 
  
\node [right= 0.1 cm of node13] (res0) {\pbox{2.5cm}{
$3.758$ \\ 
$0.016$ \\ 
}} ;
\node [draw,circle] (node14) at (3.5, -10.5) {14} ;
 \draw [-latex',green,line width=2pt] (node13) -- (node14); 
  
\node [right= 0.1 cm of node14] (res0) {\pbox{2.5cm}{
$3.781$ \\ 
$0.024$ \\ 
}} ;
\end{tikzpicture}

\end{scriptsize}
 \subcaption{ t=0}
\end{subfigure}
\begin{subfigure}{0.49\columnwidth} \centering
\centering
 \begin{scriptsize}
\begin{tikzpicture}[scale=0.3]
\node [draw,circle] (node0) at (0,0) {0} ;
\node [draw,circle] (node1) at (0.0, -3.5) {1} ;
 \draw [-latex',green,line width=2pt] (node0) -- (node1); 
  
\node [left= 0.1 cm of node1] (res0) {\pbox{2.5cm}{
$1.004$ \\ 
$0.005$ \\ 
}} ;
\node [draw,circle] (node2) at (0.0, -7.0) {2} ;
 \draw [latex'-, red,line width=2pt] (node1) -- (node2); 
  
\node [left= 0.1 cm of node2] (res0) {\pbox{2.5cm}{
$0.98$ \\ 
$0.02$ \\ 
}} ;
\node [draw,circle] (node3) at (0.0, -10.5) {3} ;
 \draw [latex'-, red,line width=2pt] (node2) -- (node3); 
  
\node [left= 0.1 cm of node3] (res0) {\pbox{2.5cm}{
$0.942$ \\ 
$0.041$ \\ 
}} ;
\node [draw,circle] (node4) at (0.0, -14.0) {4} ;
 \draw [-latex',green,line width=2pt] (node3) -- (node4); 
  
\node [left= 0.1 cm of node4] (res0) {\pbox{2.5cm}{
$0.946$ \\ 
$0.042$ \\ 
}} ;
\node [draw,circle] (node5) at (0.0, -17.5) {5} ;
 \draw [-latex',green,line width=2pt] (node4) -- (node5); 
  
\node [left= 0.1 cm of node5] (res0) {\pbox{2.5cm}{
$0.949$ \\ 
$0.043$ \\ 
}} ;
\node [draw,circle] (node6) at (0.0, -21.0) {6} ;
 \draw [-latex',green,line width=2pt] (node5) -- (node6); 
  
\node [left= 0.1 cm of node6] (res0) {\pbox{2.5cm}{
$0.951$ \\ 
$0.044$ \\ 
}} ;
\node [draw,circle] (node8) at (3.5, -14.0) {8} ;
 \draw [latex'-, red, style = dashed ,line width=2pt] (node3) -- (node8); 
  
\node [right= 0.1 cm of node8] (res0) {\pbox{2.5cm}{
$0.003$ \\ 
$0.115$ \\ 
}} ;
\node [draw,circle] (node7) at (3.5, -17.5) {7} ;
 \draw [latex'-, red,line width=2pt] (node8) -- (node7); 
  
\node [below= 0.1 cm of node7] (res0) {\pbox{2.5cm}{
$-0.0$ \\ 
$0.115$ \\ 
}} ;
\node [draw,circle] (node9) at (7.0, -17.5) {9} ;
 \draw [latex'-, red,line width=2pt] (node8) -- (node9); 
  
\node [right= 0.1 cm of node9] (res0) {\pbox{2.5cm}{
$0.002$ \\ 
$0.115$ \\ 
}} ;
\node [draw,circle] (node10) at (7.0, -21.0) {10} ;
 \draw [latex'-, red,line width=2pt] (node9) -- (node10); 
  
\node [right= 0.1 cm of node10] (res0) {\pbox{2.5cm}{
$0.001$ \\ 
$0.115$ \\ 
}} ;
\node [draw,circle] (node11) at (7.0, -24.5) {11} ;
 \draw [latex'-, red,line width=2pt] (node10) -- (node11); 
  
\node [right= 0.1 cm of node11] (res0) {\pbox{2.5cm}{
$-0.0$ \\ 
$0.115$ \\ 
}} ;
\node [draw,circle] (node12) at (3.5, -3.5) {12} ;
 \draw [-latex',green, style = dashed ,line width=2pt] (node0) -- (node12); 
  
\node [right= 0.1 cm of node12] (res0) {\pbox{2.5cm}{
$3.567$ \\ 
$0.732$ \\ 
}} ;
\node [draw,circle] (node13) at (3.5, -7.0) {13} ;
 \draw [-latex',green,line width=2pt] (node12) -- (node13); 
  
\node [right= 0.1 cm of node13] (res0) {\pbox{2.5cm}{
$3.583$ \\ 
$0.737$ \\ 
}} ;
\node [draw,circle] (node14) at (3.5, -10.5) {14} ;
 \draw [-latex',green,line width=2pt] (node13) -- (node14); 
  
\node [right= 0.1 cm of node14] (res0) {\pbox{2.5cm}{
$3.593$ \\ 
$0.741$ \\ 
}} ;
\end{tikzpicture}

\end{scriptsize}
\subcaption{ t=1 }
\end{subfigure}%
 \caption{Directions of  active flows $\bm{f}$  and \acp{DLMP} $(y^p,y^q )$ at the solution given by Algorithm \protect\ref{alg:PPDLMP}. Saturated lines are dashed.}
 \label{fig:resFlows}
 \end{figure}

The solutions show that the active (and reactive) \acp{DLMP} obtained for each time period are close to the \acp{DLMP} at the root node $(\dlmpn0, \dlmqn0)$, with the following exceptions:
\begin{itemize}
\item For the branch composed of nodes $8,7,9,10,11$,  active \acp{DLMP} are close to~$0.0$ 
  due to the presence of renewable production (at null cost) at node 11, and of negative load at node $7$, which together fully compensate for the demand on this branch.
  Since Line $(3,8)$ is saturated, no energy can be exported further.
\item  Active \acp{DLMP} on the branch composed of nodes $(12,13,14)$ at  $t=1$ are much larger than on other nodes: this is explained by the congestion  of line $(0,12)$.
\item  The \ac{DLMP} for node 7 and $t=0$ is \emph{strictly negative}: the (negative) consumption for this node is at its upper bound $p_{7,0}=\overline{P}_{7,0}=-0.173$.  The negative \ac{DLMP} suggests that the system will be better off if less power is {injected} by node 7. 
\end{itemize}

\begin{figure}[!ht]
\centering
\includegraphics[width=0.7\columnwidth]{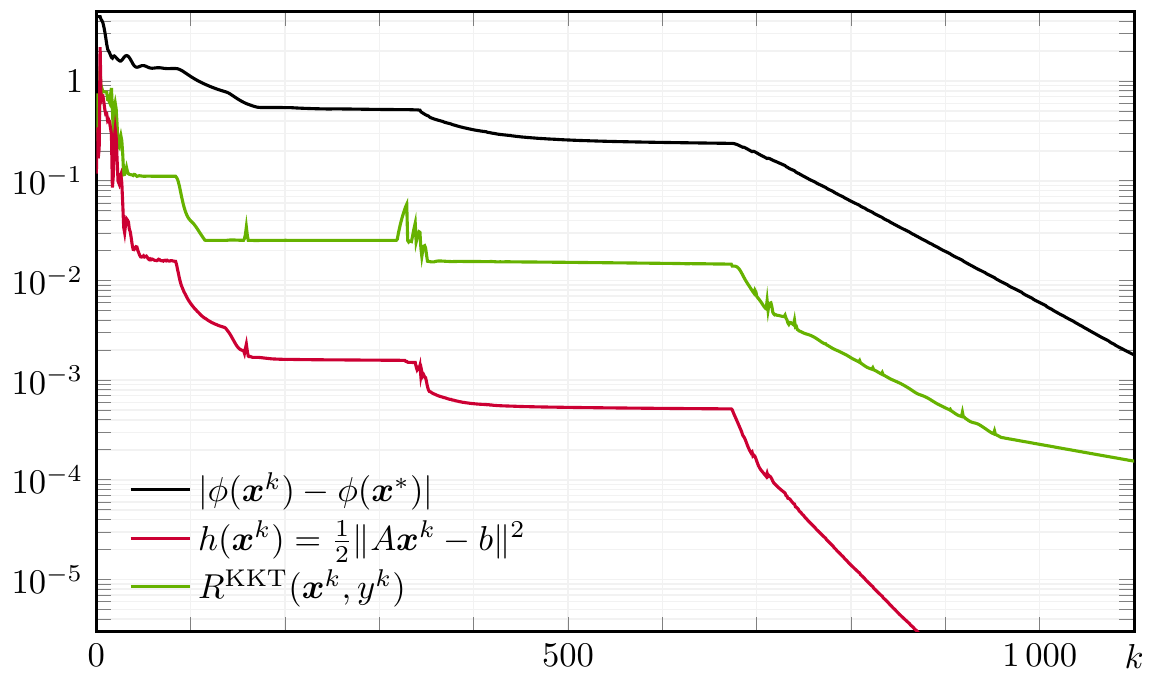}
 \caption{Convergence of \acp{DLMP} $\dualk{\iter}$ and the objective function value $\Phi(\xx^{k})$%
}
  \label{fig:lastiterate}  
\end{figure} 
\begin{figure}[!ht]
\centering
\includegraphics[width=0.7\columnwidth]{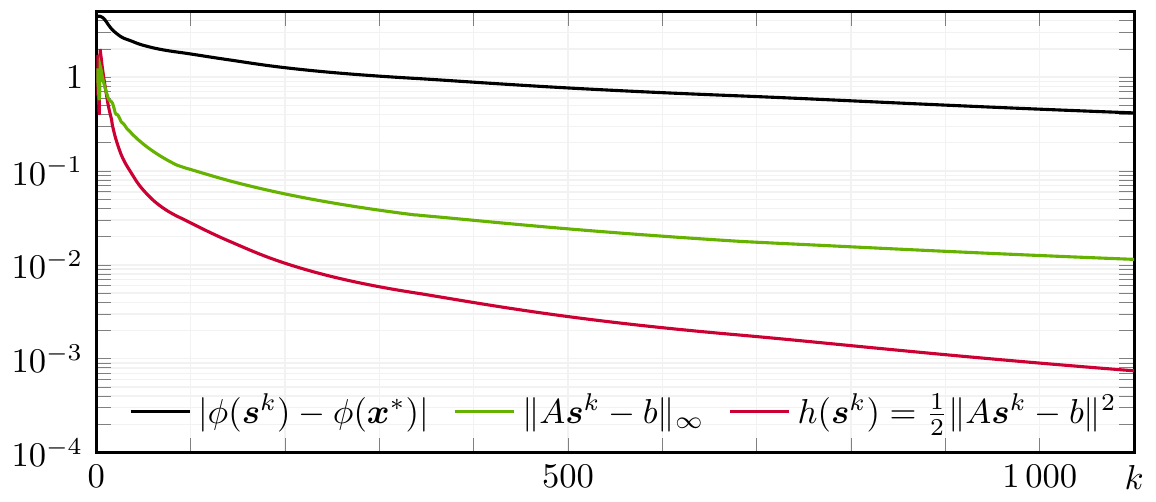}
\\\includegraphics[width=0.7\columnwidth]{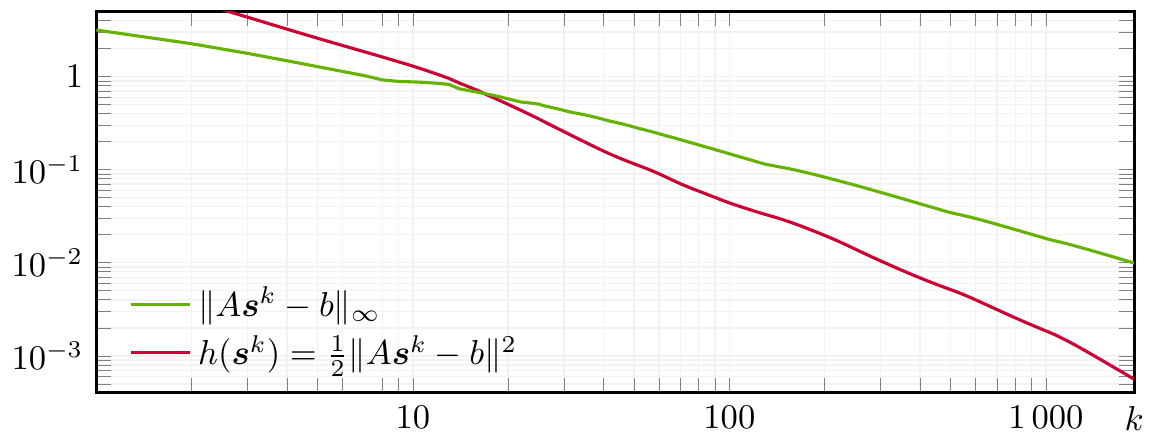}
 \caption{Convergence of the average iterate~$\bm{w}^{k}$} 
 \label{fig:ergodicaverage} 
\end{figure}

Convergence of Algorithm~\ref{alg:PPDLMP} for the 15-bus network is shown in \Cref{fig:lastiterate,fig:ergodicaverage}.
\Cref{fig:lastiterate} displays 
the convergence of the last iterate with respect to various criteria: 
\begin{itemize}
\item[(i)] convergence of~$\cost{\xallk{\iter}}$  to the optimal cost~$\cost{\xallsol}$, where we computed $\xx^{\ast}$ using the CvxOpt Python library; 
\item[(ii)] convergence to zero of the primal residuals~$\constraint{\xallk{\iter}}$ and~$\norm{\Aall\xallk{\iter}-\bvector}$;
\item[(ii)] convergence to zero of the stopping criterion formed on the KKT residual $R^{\text{KKT}}(\xx^{k},y^{k})=\text{dist}_{\infty}(0,(\partial_{x},-\partial_{y})L(\xx^{k},y^{k}))$, 
where $\lagrangian{\xall}{\dualvariable}=\extcost{\xall}+\inner{\dualvariable}{\Aall\xall-\bvector}$ denotes the Lagrangian of~\eqref{eq:P} in the feasible case; 
\item[(iv)] convergence of the \Cref{fig:ergodicaverage} shows the convergence to zero of the primal infeasiblity in the averaged trajectory $\bm{w}^{k}$, as predicted by the theory. 
\end{itemize}

\section{Conclusion}
In this paper we developed a novel random block-coordinate descent method for ill-posed convex non-smooth optimisation problems. Our scheme gives optimal iteration complexity guarantees in terms of the last iterate of the sequence generated by the algorithm. As such, we directly generalise results from \cite{luke18} to a fully distributed optimisation setting. Motivated by the need to achieve a distributed optimisation of the power system, we follow a data-driven approach leading to a potentially inconsistent AC-OPF formulation. We show that our Algorithmic scheme is immediately applicable to such a challenging and important setting, and achieves distributed control of the power grid via distributed locational marginal prices as price signals to independent agents (i.e. aggregators). Future work should consist in extending the method to non-convex optimisation problems, so that exact formulation of the power flow constraints can be implemented. We also plan to conduct extensions of this work where the electric network is exposed to stochastic uncertainty. 

\backmatter

\section*{Declarations}

\begin{itemize}
\item Availability of data and materials: Not Applicable
\item Funding: This research benefited from the support of the FMJH Program Gaspard Monge for optimization and operations research and their interactions with data science.
\item Competing interests: No competing interests
\item Acknowledgements: We thank Yura Malitsky for insightful discussions on this topic and Olivier Bilenne for his initial work on this paper.
\item Authors' contributions: Mathias Staudigl developed the algorithm and the analysis. Paulin Jacquot contributed to the development of the algorithm, and implemented the numerical example. 
\end{itemize}

\begin{appendices}
\section{Technical Appendix}

\subsection{General Facts}
\label{app:auxiliary}
The following relations will be useful in the analysis. 

\begin{lemma}[\cite{BauCom16}, Corollary 2.14]
\label{lem:BauCom}
For all $\xx,\bm{y}\in\R^{d}$ and $t\in[0,1]$, one has 
$$
\norm{ t\xx+(1-t)\bm{y}}^{2}=t\norm{\xx}^{2}+(1-t)\norm{\bm{y}}^{2}-t(1-t)\norm{\xx-\bm{y}}^{2}.
$$
\end{lemma}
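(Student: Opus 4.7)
The identity is purely algebraic and follows from a direct expansion using $\norm{\bm{u}}^2 = \inner{\bm{u}}{\bm{u}}$ and the bilinearity/symmetry of the Euclidean inner product. My plan is a two-line calculation verifying equality of the two sides.

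First I would expand the left-hand side:
\begin{align*}
\norm{t\xx+(1-t)\bm{y}}^{2} &= t^{2}\norm{\xx}^{2}+2t(1-t)\inner{\xx}{\bm{y}}+(1-t)^{2}\norm{\bm{y}}^{2}.
\end{align*}
Then I would expand the cross term on the right-hand side:
\begin{align*}
-t(1-t)\norm{\xx-\bm{y}}^{2} &= -t(1-t)\norm{\xx}^{2}+2t(1-t)\inner{\xx}{\bm{y}}-t(1-t)\norm{\bm{y}}^{2},
\end{align*}
and collect the coefficients of $\norm{\xx}^{2}$ and $\norm{\bm{y}}^{2}$ in $t\norm{\xx}^{2}+(1-t)\norm{\bm{y}}^{2}-t(1-t)\norm{\xx-\bm{y}}^{2}$. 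The coefficient of $\norm{\xx}^{2}$ is $t-t(1-t)=t^{2}$, and the coefficient of $\norm{\bm{y}}^{2}$ is $(1-t)-t(1-t)=(1-t)^{2}$, while the inner-product cross term is $2t(1-t)\inner{\xx}{\bm{y}}$. Matching with the first display completes the verification.

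There is no genuine obstacle here; the identity is the standard polarization-type formula for convex combinations (often stated as the ``three-point'' or ``quadratic convexity'' identity). The result is stated in \cite{BauCom16}, so one could also simply cite it, but the direct expansion given above is self-contained and requires only the basic Hilbert space structure of $\R^{d}$. The same argument in fact works verbatim in any inner product space, which is how the reference proves the statement.
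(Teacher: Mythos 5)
Your expansion is correct: both sides reduce to $t^{2}\norm{\xx}^{2}+2t(1-t)\inner{\xx}{\bm{y}}+(1-t)^{2}\norm{\bm{y}}^{2}$, which verifies the identity. The paper itself offers no proof and simply cites \cite{BauCom16}, and your direct calculation is precisely the standard argument behind that reference, so there is nothing further to add.
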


Let $(\Omega,\filtrationfunction,(\filtrationk k)_{k\geq0},\Prob)$ be a filtered probability space satisfying the usual conditions. We need the celebrated Robbins-Siegmund Lemma for the convergence analysis \cite{RS71}. 
\begin{lemma}\label{lem:RS}
For every $k\geq 0$, let $\theta_{k},u_{k},\zeta_{k}$ and $t_{k}$ be non-negative $\filtrationk k$-measurable random variables such that $(\zeta_{k})_{n\geq0}$ and $(t_{k})_{k\geq 0}$ are summable and for all $k\geq 0$,
\begin{equation}
\Ex[\theta_{k+1}\vert \filtrationk k]\leq(1+t_{k})\theta_{k}+\zeta_{k}-u_{k}\qquad\Prob-\text{a.s.}
\end{equation}
Then $(\theta_{k})_{k\geq 0}$ converges and $(u_{k})_{k\geq 0}$ is summable $\Prob$-a.s. 
\end{lemma}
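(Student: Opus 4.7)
\textbf{Proposal for the proof of Lemma \ref{lem:RS} (Robbins--Siegmund).} This is the classical Robbins--Siegmund convergence lemma, and my plan follows its standard proof: rescale to eliminate the multiplicative error $t_k$, construct a supermartingale out of the recursion, localize to overcome the lack of integrability hypotheses, and finally unravel the conclusions from Doob's convergence theorem.

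\textbf{Step 1: Rescaling.} I would first eliminate the $(1+t_k)$ factor. Define $\beta_k := \prod_{j=0}^{k-1}(1+t_j)^{-1}$, which is $\filtrationk{k}$-measurable, positive, and decreasing. Since $\sum_j t_j < \infty$ almost surely, $\beta_\infty := \lim_k \beta_k$ exists and is strictly positive almost surely. Set $\tilde{\theta}_k := \beta_k \theta_k$, $\tilde{\zeta}_k := \beta_{k+1}\zeta_k$, $\tilde{u}_k := \beta_{k+1}u_k$. These are non-negative and $\filtrationk{k}$-measurable, and multiplying the hypothesis through by $\beta_{k+1}$ yields the simpler recursion $\Ex[\tilde{\theta}_{k+1}\,|\,\filtrationk{k}] \leq \tilde{\theta}_k + \tilde{\zeta}_k - \tilde{u}_k$. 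Summability of $\tilde{\zeta}_k$ is preserved since $\beta_{k+1} \leq 1$.

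\textbf{Step 2: Supermartingale and localization.} Define the process $X_k := \tilde{\theta}_k - \sum_{j=0}^{k-1}(\tilde{\zeta}_j - \tilde{u}_j)$. A direct computation from the recursion shows $\Ex[X_{k+1}\,|\,\filtrationk{k}] \leq X_k$, so $X_k$ is a supermartingale. It is bounded below by $-\sum_{j<k}\tilde{\zeta}_j$, but only almost surely, not deterministically. To bring Doob's convergence theorem to bear, I localize with the stopping time $\tau_N := \inf\{k \geq 0 : \sum_{j=0}^{k-1}\tilde{\zeta}_j > N \text{ or } \theta_0 > N\}$. The stopped process $X_{k\wedge\tau_N}$ remains a supermartingale (optional stopping), has deterministic lower bound $-N$, and starts from a bounded value $X_0 \leq N$, hence is bounded in $L^1$. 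Doob's supermartingale convergence theorem yields almost-sure convergence of $X_{k\wedge\tau_N}$ to a finite limit. Letting $N\to\infty$, the almost-sure finiteness of $\theta_0$ and of $\sum_j\tilde{\zeta}_j$ guarantees $\tau_N \to \infty$ almost surely, whence $X_k$ itself converges almost surely to a finite limit.

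\textbf{Step 3: Extracting the conclusions.} Rewriting $X_k = \tilde{\theta}_k + \sum_{j<k}\tilde{u}_j - \sum_{j<k}\tilde{\zeta}_j$, and using that both $X_k$ and $\sum_{j<k}\tilde{\zeta}_j$ converge almost surely, I conclude $\tilde{\theta}_k + \sum_{j<k}\tilde{u}_j$ converges almost surely. Since $\sum_{j<k}\tilde{u}_j$ is monotone non-decreasing and non-negative, and $\tilde{\theta}_k \geq 0$, a convergent sum forces both summands to converge individually (otherwise the monotone sum would diverge to $+\infty$, contradicting finiteness). Thus $\sum_j\tilde{u}_j < \infty$ and $\tilde{\theta}_k$ converges almost surely. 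Since $\beta_\infty > 0$ and $\beta_{j+1}\geq\beta_\infty$ almost surely, $\sum_j u_j < \infty$, and $\theta_k = \tilde{\theta}_k/\beta_k \to \tilde{\theta}_\infty/\beta_\infty$ almost surely.

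\textbf{Main obstacle.} The only genuinely delicate point is that we have no integrability hypothesis on $\theta_0$, $\sum_j\zeta_j$, or $\sum_j t_j$ --- everything is guaranteed only almost surely. This is precisely why the double localization in $\tau_N$ (truncating both $\theta_0$ and the cumulative perturbation) is necessary: without it, one cannot directly invoke a supermartingale convergence theorem, because $\Ex[X_k^-]$ is not a priori bounded. Once this localization is in place and $\tau_N \to \infty$ almost surely is established, the rest of the argument is essentially bookkeeping.
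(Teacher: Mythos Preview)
The paper does not supply its own proof of this lemma; it states the result as the ``celebrated Robbins--Siegmund Lemma'' and cites \cite{RS71} for the proof. Your proposal is a correct rendition of the standard argument (rescale to remove the multiplicative error, build a supermartingale, localize to handle the lack of integrability hypotheses, then invoke Doob), so there is nothing to compare against and no gap to flag.
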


\subsection{Connections among the iterates produced by Algorithms \ref{alg:RBCD} and \ref{alg:PDA}}
 
To obtain a compact representation of the primal updates, let us introduce the iid Bernoulli process $\epsilon_{i}^{k}:\Omega\to\{0,1\}$ by 
\[
\epsilon_{i}^{k}(\omega)=\left\{\begin{array}{cc} 
1 & \text{if }i\in\iota_{k}(\omega)\\
0 & \text{else}
\end{array}\right.
\]
and the random matrix $\bm{E}_{k}=\blkdiag[\epsilon_{1}^{k}\bm{I}_{m_{1}},\ldots,\epsilon^{k}_{d}\bm{I}_{m_{d}}]\in\{0,1\}^{m\times m}$. This matrix corresponds to the identity for those blocks that are activated in round $k$, and zero for those not activated. Therefore, we call it the \emph{activation matrix} of the Algorithm. By definition of the sampling process, we have $\Ex(\epsilon^{k}_{i})=\Prob(i\in\iota_{k})=\pi_{i}$ for all $i\in[d]$ and $k\geq 0$, so that $\Ex[\bm{E}_{k}]=\bm{P}^{-1}$.

Let $\epsilon^{k}:=(\epsilon_{1}^{k},\ldots,\epsilon_{d}^{k})\in\{0,1\}^{d}$, and $\mathcal{F}_{k}:=\sigma(\xx^{0},\epsilon^{0},\ldots,\epsilon^{k-1})$ denote the history of the process up to step $k$. We denote by $\Ex_{k}[\cdot]:=\Ex[\cdot\vert\filtrationfunction_{k}]$ the conditional expectation at stage $k$. Denoting by $\hat{\xx}^{k+1}=\FB^{k}(\xx^{k})$, we obtain the compact representation of the primal update as 
\begin{equation}\label{eq:xupdate}
\xx^{k+1}=\xx^{k}+\bm{E}_{k}(\hat{\xx}^{k+1}-\xx^{k}).
\end{equation}
Furthermore, $\Ex_{k}(\xx^{k+1})=\bm{P}^{-1}\hat{\xx}^{k+1}+(\bm{I}-\bm{P}^{-1})\xx^{k},$ so that 
\begin{equation}\label{eq:Exhatx}
\Ex_{k}[\xx^{k}+\bm{P}(\xx^{k+1}-\xx^{k})]=\hat{\xx}^{k+1}.
\end{equation}

\begin{lemma}\label{lem:Ex1}
For all $k\geq 0$, we have 
\begin{equation}\label{eq:Ex1}
\begin{split}
\frac{1}{2}\norm{\bm{A}(\hat{\xx}^{k+1}-\bm{w}^{k})}^{2}&=\frac{1}{2}\Ex_{k}\left[\norm{\bm{A}(\xx^{k}+\bm{P}(\xx^{k+1}-\xx^{k})-\bm{w}^{k})}^{2}\right]\\
&-\frac{1}{2}\Ex_{k}\left[\norm{\bm{A}(\bm{P}\bm{E}_{k}-\bm{I})(\hat{\xx}^{k+1}-\xx^{k})}^{2}\right].
\end{split}
\end{equation}
\end{lemma}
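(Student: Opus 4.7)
The identity is essentially a bias–variance (conditional variance) decomposition. The plan is to write the quantity inside the first squared norm on the right as a sum of $\bm{A}(\hat{\xx}^{k+1}-\bm{w}^{k})$ plus a conditionally mean-zero random vector, and then expand the square.

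First, I would use the block update rule \eqref{eq:xupdate} to rewrite
\[
\bm{P}(\xx^{k+1}-\xx^{k})=\bm{P}\bm{E}_{k}(\hat{\xx}^{k+1}-\xx^{k}),
\]
and add/subtract $\hat{\xx}^{k+1}-\xx^{k}$ inside the first norm on the right to obtain the decomposition
\[
\bm{A}\bigl(\xx^{k}+\bm{P}(\xx^{k+1}-\xx^{k})-\bm{w}^{k}\bigr)=\bm{A}(\hat{\xx}^{k+1}-\bm{w}^{k})+Y_{k},
\]
where
\[
Y_{k}:=\bm{A}(\bm{P}\bm{E}_{k}-\bm{I})(\hat{\xx}^{k+1}-\xx^{k}).
\]
This is the key algebraic manipulation.

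Next, I would observe that both $\xx^{k}$, $\hat{\xx}^{k+1}=\FB^{k}(\xx^{k})$, and $\bm{w}^{k}$ are $\filtrationfunction_{k}$-measurable (they depend only on $\xx^{0},\epsilon^{0},\ldots,\epsilon^{k-1}$), whereas the randomness in $Y_{k}$ enters solely through $\bm{E}_{k}$. Using $\Ex_{k}[\bm{E}_{k}]=\bm{P}^{-1}$, so that $\Ex_{k}[\bm{P}\bm{E}_{k}-\bm{I}]=0$, we get $\Ex_{k}[Y_{k}]=0$.

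Finally, I would expand the square and take conditional expectation:
\[
\Ex_{k}\!\left[\norm{\bm{A}(\hat{\xx}^{k+1}-\bm{w}^{k})+Y_{k}}^{2}\right]=\norm{\bm{A}(\hat{\xx}^{k+1}-\bm{w}^{k})}^{2}+2\inner{\bm{A}(\hat{\xx}^{k+1}-\bm{w}^{k})}{\Ex_{k}[Y_{k}]}+\Ex_{k}[\norm{Y_{k}}^{2}],
\]
where the cross term vanishes by the previous step. Rearranging and dividing by $2$ yields exactly \eqref{eq:Ex1}.

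There is no real obstacle here: the result is almost immediate once the decomposition $\bm{A}\bigl(\xx^{k}+\bm{P}(\xx^{k+1}-\xx^{k})-\bm{w}^{k}\bigr)=\bm{A}(\hat{\xx}^{k+1}-\bm{w}^{k})+Y_{k}$ is spotted, and the only point requiring a little care is verifying the $\filtrationfunction_{k}$-measurability of the ``deterministic'' part so that the cross term indeed drops out under $\Ex_{k}[\cdot]$.
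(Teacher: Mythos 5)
Your proof is correct and is essentially the paper's own argument: the paper invokes the identity $\norm{\Ex[X]}^{2}=\Ex[\norm{X}^{2}]-\Ex[\norm{X-\Ex X}^{2}]$ with $X=\bm{A}(\xx^{k}+\bm{P}(\xx^{k+1}-\xx^{k})-\bm{w}^{k})$, which is exactly your decomposition $X=\bm{A}(\hat{\xx}^{k+1}-\bm{w}^{k})+Y_{k}$ with $\Ex_{k}[Y_{k}]=0$, written as a conditional variance identity rather than an explicit expansion of the square. No gaps.
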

\begin{proof}
Note that for any random variable $X:\Omega\to\R^{q}$, it holds true $\norm{\Ex[X]}^{2}=\Ex\left[\norm{X}^{2}\right]-\Ex\left[\norm{X-\Ex X}^{2}\right]$. Set $X:=\bm{A}(\xx^{k}+\bm{P}(\xx^{k+1}-\xx^{k})-\bm{w}^{k})$. Then, by eq. \eqref{eq:Exhatx}, we see $\Ex_{k}[X]=\bm{A}(\hat{\xx}^{k+1}-\bm{w}^{k})$. Furthermore, 
\begin{equation}\label{eq:centerd}
X-\Ex_{k}[X]=\bm{A}\left((\bm{I}-\bm{P})\xx^{k}+\bm{P}\xx^{k+1}-\hat{\xx}^{k+1}\right)=\bm{A}(\xx^{k}+\bm{P}(\xx^{k+1}-\xx^{k})-\hat{\xx}^{k+1}).
\end{equation}
Using \eqref{eq:xupdate}, we get
$$
\bm{P}\xx^{k+1}-\hat{\xx}^{k+1}=(\bm{P}\bm{E}_{k}-\bm{I})\hat{\xx}^{k+1}+\bm{P}(\bm{I}-\bm{E}_{k})\xx^{k},
$$
which implies 
\begin{equation}\label{eq:centerd2}
\bm{A}(\xx^{k}+\bm{P}(\xx^{k+1}-\xx^{k})-\hat{\xx}^{k+1})=X-\Ex_{k}[X]=\bm{A}(\bm{P}\bm{E}_{k}-\bm{I})(\hat{\xx}^{k+1}-\xx^{k}).
\end{equation}
Collecting all therms above, one obtains 
\begin{align*}
\frac{1}{2}\Ex_{k}\left[\norm{\bm{A}(\xx^{k}+\bm{P}(\xx^{k+1}-\xx^{k})-\bm{w}^{k})}^{2}\right]&=\frac{1}{2}\norm{\bm{A}(\hat{\xx}^{k+1}-\bm{w}^{k})}^{2}\\
&+\frac{1}{2}\Ex_{k}\left[\norm{\bm{A}(\bm{P}\bm{E}_{k}-\bm{I})(\hat{\xx}^{k+1}-\xx^{k})}^{2}\right].
\end{align*}
\end{proof}

The next Lemma provides a compact expression for the correction term in eq. \eqref{eq:Ex1}.
\begin{lemma}\label{lem:Ex2}
Let $\bm{\Xi}=\Ex[\bm{E}_{k}\bm{P}\bm{A}^{\top}\bm{A}\bm{P}\bm{E}_{k}]$, which is a symmetric time-independent matrix, thanks to iid sampling. We have 
\begin{equation}\label{eq:Ex2}
\Ex_{k}\left[\norm{\bm{A}(\bm{P}\bm{E}_{k}-\bm{I})(\hat{\xx}^{k+1}-\xx^{k})}^{2}\right]=\norm{\hat{\xx}^{k+1}-\xx^{k}}^{2}_{\Xi-\bm{A}^{\top}\bm{A}}.
\end{equation}
\end{lemma}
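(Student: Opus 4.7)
The plan is to reduce the statement to a direct computation of a quadratic form in expectation. Let me abbreviate $\bm{t} := \hat{\xx}^{k+1}-\xx^{k}$, which is $\mathcal{F}_{k}$-measurable, so in conditional expectation it behaves like a deterministic vector. The quantity inside the expectation can be written as
\begin{equation*}
\norm{\bm{A}(\bm{P}\bm{E}_{k}-\bm{I})\bm{t}}^{2} = \bm{t}^{\top}(\bm{P}\bm{E}_{k}-\bm{I})^{\top}\bm{A}^{\top}\bm{A}(\bm{P}\bm{E}_{k}-\bm{I})\bm{t}.
\end{equation*}
Since $\bm{E}_{k}$ is symmetric (it is a block-diagonal projector matrix with 0/1 blocks) and $\bm{P}$ is symmetric, $(\bm{P}\bm{E}_{k})^{\top}=\bm{E}_{k}\bm{P}$. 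Expanding the product inside gives four terms:
\begin{equation*}
\bm{E}_{k}\bm{P}\bm{A}^{\top}\bm{A}\bm{P}\bm{E}_{k} \;-\; \bm{E}_{k}\bm{P}\bm{A}^{\top}\bm{A} \;-\; \bm{A}^{\top}\bm{A}\bm{P}\bm{E}_{k} \;+\; \bm{A}^{\top}\bm{A}.
\end{equation*}

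Next I would take the conditional expectation $\Ex_{k}[\cdot]$ of this matrix-valued expression term by term. The first term is precisely $\bm{\Xi}$ by its definition. For the two cross terms, I use the key fact that the sampling is i.i.d., so $\Ex_{k}[\bm{E}_{k}] = \Ex[\bm{E}_{k}] = \bm{P}^{-1}$, which gives $\Ex_{k}[\bm{E}_{k}\bm{P}] = \bm{I}$ and $\Ex_{k}[\bm{P}\bm{E}_{k}] = \bm{I}$. Hence each cross term contributes $-\bm{A}^{\top}\bm{A}$, and the last term is deterministic. Summing,
\begin{equation*}
\Ex_{k}\bigl[(\bm{P}\bm{E}_{k}-\bm{I})^{\top}\bm{A}^{\top}\bm{A}(\bm{P}\bm{E}_{k}-\bm{I})\bigr] = \bm{\Xi} - \bm{A}^{\top}\bm{A} - \bm{A}^{\top}\bm{A} + \bm{A}^{\top}\bm{A} = \bm{\Xi} - \bm{A}^{\top}\bm{A}.
\end{equation*}

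Combining this with the $\mathcal{F}_{k}$-measurability of $\bm{t}$ (which pulls it outside the conditional expectation) immediately yields
\begin{equation*}
\Ex_{k}\bigl[\norm{\bm{A}(\bm{P}\bm{E}_{k}-\bm{I})\bm{t}}^{2}\bigr] = \bm{t}^{\top}(\bm{\Xi}-\bm{A}^{\top}\bm{A})\bm{t} = \norm{\bm{t}}^{2}_{\bm{\Xi}-\bm{A}^{\top}\bm{A}},
\end{equation*}
as claimed. There is no real obstacle here: the only subtlety worth double-checking is that $\bm{\Xi}-\bm{A}^{\top}\bm{A}$ is positive semidefinite (so the weighted-norm notation is meaningful); this follows from Jensen's inequality applied to the convex map $\bm{M}\mapsto \bm{M}^{\top}\bm{A}^{\top}\bm{A}\bm{M}$ evaluated at $\bm{M}=\bm{P}\bm{E}_{k}$, since $\Ex[\bm{P}\bm{E}_{k}]=\bm{I}$. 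Alternatively, this is also what the identity itself guarantees, because the left-hand side is manifestly non-negative.
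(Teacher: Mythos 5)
Your proof is correct and follows essentially the same route as the paper: expand $(\bm{P}\bm{E}_{k}-\bm{I})^{\top}\bm{A}^{\top}\bm{A}(\bm{P}\bm{E}_{k}-\bm{I})$ into four terms and use $\Ex_{k}[\bm{P}\bm{E}_{k}]=\bm{I}$ to evaluate the cross terms, the paper merely phrasing the final step via a trace identity where you invoke $\mathcal{F}_{k}$-measurability of $\hat{\xx}^{k+1}-\xx^{k}$ directly. Your closing remark on positive semidefiniteness of $\bm{\Xi}-\bm{A}^{\top}\bm{A}$ is a sensible extra observation not needed for the identity itself.
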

\begin{proof}
By definition of the sampling procedure, we have $\Ex_{k}[\bm{P}\bm{E}_{k}]=\bm{I}_{m}$. Some simple algebra shows then that  
\begin{align*}
\Ex_{k}\left[\norm{\bm{A}(\bm{P}\bm{E}_{k}-\bm{I})(\hat{\xx}^{k+1}-\xx^{k})}^{2}\right]&=\Ex_{k}\left[(\hat{\xx}^{k+1}-\xx^{k})^{\top}\bm{M}_{k}(\hat{\xx}^{k+1}-\xx^{k})\right],
\end{align*}
where $\bm{M}_{k}:=\bm{E}_{k}\bm{P}\bm{A}^{\top}\bm{A}\bm{P}\bm{E}_{k}-\bm{A}^{\top}\bm{A}\bm{P}\bm{E}_{k}-\bm{E}_{k}\bm{P}\bm{A}^{\top}\bm{A}+\bm{A}^{\top}\bm{A}$ is a symmetric matrix, satisfying $\Ex_{k}\bm{M}_{k}=\bm{\Xi}-\bm{A}^{\top}\bm{A}$. Hence, 
\begin{align*}
\Ex_{k}\left[(\hat{\xx}^{k+1}-\xx^{k})^{\top}\bm{M}_{k}(\hat{\xx}^{k+1}-\xx^{k})\right]&=\Ex_{k}\left[\tr\left(\bm{M}_{k}(\hat{\xx}^{k+1}-\xx^{k})(\hat{\xx}^{k+1}-\xx^{k})^{\top}\right)\right]\\
&=\tr\left(\Ex_{k}[\bm{M}_{k}](\hat{\xx}^{k+1}-\xx^{k})(\hat{\xx}^{k+1}-\xx^{k})^{\top}\right)\\
&=\tr\left((\Xi-\bm{A}^{\top}\bm{A})(\hat{\xx}^{k+1}-\xx^{k})(\hat{\xx}^{k+1}-\xx^{k})^{\top}\right)\\
&=\norm{\hat{\xx}^{k+1}-\xx^{k}}_{\bm{\Xi}-\bm{A}^{\top}\bm{A}}.
\end{align*}
\end{proof}
\begin{remark}
The matrix $\bm{\Xi}$ can be given a simple expression in terms of the probability matrix $\bm{\Pi}$. A direct computation shows that $\bm{\Xi}$ is a block-symmetric matrix with $d^{2}$ blocks $\bm{\Xi}[i,j],1\leq i,j\leq d,$ each block having the form 
$$
\bm{\Xi}[i,j]=\frac{(\bm{\Pi})_{ij}}{\pi_{i}\pi_{j}}\bm{A}^{\top}_{i}\bm{A}_{j}^{\top}\text{ if }i\neq j,\text{ and }\bm{\Xi}[i,i]=\frac{1}{\pi_{i}}\bm{A}^{\top}_{i}\bm{A}_{i}\quad \forall i\in[d].
$$
In special instances this matrix can be simplified even further:
\begin{enumerate}
\item If $\Prob(\abs{\mathcal{I}}=1)=1$ the random sampling is a single coordinate activation mechanism. In this case the associated matrix $\bm{\Pi}$ is diagonal with entries $\pi_{i}$. It follows that $\bm{\Xi}[i,i]=\frac{1}{\pi_{i}}\bm{A}_{i}^{\top}\bm{A}_{i}$ for all $i\in[d]$ and $\bm{\Xi}[i,j]=0$ for $i\neq j$. 
\item We say that the matrix $\bm{A}$ has \emph{orthogonal design} if $\bm{A}_{i}^{\top}\bm{A}_{j}$ is the zero matrix for $i\neq j$. Also in this case the matrix $\bm{\Xi}$ is block diagonal, with the same coordinate expression as above. 
\end{enumerate}
\end{remark}

%
 
 \subsection{Properties of the penalty function $h$}
 We collect some important identities involving the penalty function $h$ in this subsection. From the definition of the iterate $\bm{z}^{k}$, we see 
 \begin{equation}\label{eq:hzk}
 \begin{split}
 \nabla h(\bm{z}^{k})&=\bm{A}^{\top}(\bm{A}\bm{z}^{k}-\bm{b})=\frac{1}{S_{k}}\left(S_{k-1}\bm{A}^{\top}(\bm{A}\bm{w}^{k}-\bm{b})+\sigma_{k}\bm{A}^{\top}(\bm{A}\xx^{k}-\bm{b})\right)\\
 &=(1-\theta_{k})\nabla h(\bm{w}^{k})+\theta_{k}\nabla h(\xx^{k}).
 \end{split}
 \end{equation}
 
 Furthermore, we note that the definition of the iterate $\bm{w}^{k+1}$ implies 
 \begin{equation}\label{eq:hwk}
 \begin{split}
 h(\bm{w}^{k+1})&=h\left((1-\theta_{k})\bm{w}^{k}+\theta_{k}(\xx^{k}+\bm{P}(\xx^{k+1}-\xx^{k}))\right)\\
 &\overset{\eqref{eq:hstrongconvex}}{=}(1-\theta_{k})h(\bm{w}^{k})+\theta_{k}h(\xx^{k}+\bm{P}(\xx^{k+1}-\xx^{k}))\\
 &-\frac{\theta_{k}(1-\theta_{k})}{2}\norm{\bm{A}(\xx^{k}+\bm{P}(\xx^{k+1}-\xx^{k})-\bm{w}^{k})}^{2}.
 \end{split}
 \end{equation}
 
 \begin{lemma}
 We have 
 \begin{equation}\label{eq:hforward}
 \Ex_{k}[h(\xx^{k}+\bm{P}(\xx^{k+1}-\xx^{k}))]=h(\hat{\xx}^{k+1})+\frac{1}{2}\Ex_{k}[\norm{\bm{A}(\xx^{k}+\bm{P}(\xx^{k+1}-\xx^{k}))-\hat{\xx}^{k+1}}^{2}].
 \end{equation}
 \end{lemma}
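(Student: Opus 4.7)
The plan is to recognize that the claim is essentially the conditional variance decomposition $\Ex[\|X\|^{2}]=\|\Ex[X]\|^{2}+\Ex[\|X-\Ex[X]\|^{2}]$ applied to the random vector $X=\bm{A}(\xx^{k}+\bm{P}(\xx^{k+1}-\xx^{k}))-\bm{b}$. Indeed, using the block-coordinate update rule \eqref{eq:xupdate} together with the fact that $\Ex_{k}[\bm{P}\bm{E}_{k}]=\bm{I}_{m}$, the identity \eqref{eq:Exhatx} gives $\Ex_{k}[\xx^{k}+\bm{P}(\xx^{k+1}-\xx^{k})]=\hat{\xx}^{k+1}$, hence $\Ex_{k}[X]=\bm{A}\hat{\xx}^{k+1}-\bm{b}$.

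First, I would write $Y\eqd\xx^{k}+\bm{P}(\xx^{k+1}-\xx^{k})$ and split
\[
\bm{A}Y-\bm{b}=\bm{A}(Y-\hat{\xx}^{k+1})+(\bm{A}\hat{\xx}^{k+1}-\bm{b}).
\]
Expanding the squared norm of the right-hand side yields a cross-term $2\inner{\bm{A}(Y-\hat{\xx}^{k+1})}{\bm{A}\hat{\xx}^{k+1}-\bm{b}}$ whose conditional expectation vanishes since $\Ex_{k}[Y-\hat{\xx}^{k+1}]=0$ and $\bm{A}\hat{\xx}^{k+1}-\bm{b}$ is $\mathcal{F}_{k}$-measurable. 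Taking conditional expectations therefore produces
\[
\Ex_{k}[\|\bm{A}Y-\bm{b}\|^{2}]=\Ex_{k}[\|\bm{A}(Y-\hat{\xx}^{k+1})\|^{2}]+\|\bm{A}\hat{\xx}^{k+1}-\bm{b}\|^{2},
\]
and dividing by $2$ and using the definition \eqref{eq:h} of $h$ yields the stated identity (interpreting the norm on the right-hand side as $\bm{A}$ acting on the full difference $Y-\hat{\xx}^{k+1}$, which is the only dimensionally consistent reading).

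There is no genuine obstacle here: the whole argument is just the bias--variance decomposition exploiting that $h$ is quadratic and that $\hat{\xx}^{k+1}$ is the conditional mean of the randomized forward iterate $\xx^{k}+\bm{P}(\xx^{k+1}-\xx^{k})$. The only mild care required is to verify that the cross-term really drops out, which is immediate from $\mathcal{F}_{k}$-measurability of $\hat{\xx}^{k+1}$ and $\bm{b}$; and to remark that this identity is the natural companion to Lemma~\ref{lem:Ex1}, where the same centering trick was applied with $\bm{w}^{k}$ in place of $\hat{\xx}^{k+1}$ as the anchor point.
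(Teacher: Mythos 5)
Your proof is correct and is essentially identical to the paper's: both expand $\norm{\bm{A}Y-\bm{b}}^{2}$ around the $\mathcal{F}_{k}$-measurable anchor $\bm{A}\hat{\xx}^{k+1}-\bm{b}$ and kill the cross-term by taking conditional expectations via \eqref{eq:Exhatx}. Your parenthetical remark that the displayed norm must be read as $\norm{\bm{A}(\xx^{k}+\bm{P}(\xx^{k+1}-\xx^{k})-\hat{\xx}^{k+1})}^{2}$ is also right; that is how the paper writes it in its own proof.
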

 \begin{proof}
The Pythagorean identity gives 
 \begin{align*}
 h(\xx^{k}+\bm{P}(\xx^{k+1}-\xx^{k}))&=\frac{1}{2}\norm{(\bm{A}\hat{\xx}^{k+1}-\bm{b})+\bm{A}(\xx^{k}+\bm{P}(\xx^{k+1}-\xx^{k})-\hat{\xx}^{k+1})}^{2}\\
 &=h(\hat{\xx}^{k+1})+\frac{1}{2}\norm{\bm{A}(\xx^{k}+\bm{P}(\xx^{k+1}-\xx^{k})-\hat{\xx}^{k+1})}^{2}\\
 &+\inner{\bm{A}\hat{\xx}^{k+1}-\bm{b}}{\bm{A}(\xx^{k}+\bm{P}(\xx^{k+1}-\xx^{k})-\hat{\xx}^{k+1})}.
 \end{align*}
 Taking conditional expectations on both sides, using eq. \eqref{eq:Exhatx}, establishes the claimed identity.
 \end{proof}
 
\section{Step size policy for the accelerated algorithm}
 \label{app:parameters}
Assume that parameter coupling equation \eqref{eq:coupling} holds, where 
$$
\alpha=\lambda_{\max}(\bm{M}^{-1/2}\Xi\bm{M}^{-1/2})^{-1}\text{ and } \beta=\alpha\lambda_{\max}(\bm{M}^{-1/2}(\Lambda+\Upsilon)\bm{M}^{-1/2}).
$$
Set $\kappa=\frac{\beta}{\alpha}$. With the choice $\bm{B}=\bm{P}^{-2}\Upsilon$, the matrix inequality \eqref{eq:MI2} reads as
$$
[(\alpha-\beta\tau_{k})(\bm{I}+\tau_{k}\bm{P})-\beta\tau^{2}_{k}(\bm{I}-\bm{P})]\tau_{k+1}^{2}+\tau_{k}^{2}[\beta\bm{I}+\alpha(\bm{I}-\bm{P})]\tau_{k+1}-\alpha\tau_{k}^{2}\bm{I}\succeq 0.
$$
Exploiting the block structure of the involved matrices, we can equivalently write this condition as a system of quadratic inequalities 
\begin{equation}\label{eq:tauappendix}
\tau^{2}_{k+1}\left[(\alpha-\beta\tau_{k})(\pi_{i}+\tau_{k})+\beta(1-\pi_{i})\tau^{2}_{k}\right]-\tau_{k+1}\tau^{2}_{k}[\alpha(1-\pi_{i})-\beta\pi_{i}]-\tau^{2}_{k}\alpha\pi_{i}\geq 0,
\end{equation}
holding for all $i\in[d]$ simultaneously. This defines a quadratic inequality in $\tau_{k+1}$ of the form $c_{i,1}(\tau_{k})\tau^{2}_{k+1}+c_{i,2}(\tau_{k})\tau_{k+1}-c_{i,3}(\tau_{k})\geq 0$, with coefficients 
\begin{align*}
&c_{i,1}(y):=(\alpha-\beta y)(\pi_{i}+y)+\beta(1-\pi_{i})y^{2}=\alpha\pi_{i}\left[1-\kappa y^{2}-y(\kappa-1/\pi_{i})\right],\\
&c_{i,2}(y):=y^{2}[\beta\pi_{i}-\alpha(1-\pi_{i})]=\alpha\pi_{i}y^{2}(\kappa-1/\pi_{i}+1),\\
&c_{i,3}(y):=y^{2}\alpha\pi_{i}
\end{align*}
The structure of the coefficients shows that we can eliminate the parameters $\alpha$ and $\pi_{i}$, and just continue with the coefficients 
\begin{align*}
a_{i,1}(y):=1-\kappa y^{2}-y(\kappa-1/\pi_{i}),\;a_{i,2}(y):=y^{2}(\kappa-1/\pi_{i}+1),\; a_{i,3}(y):=y^{2}.
\end{align*}
Define the function $F:\R^{2}\to\R$ by 
$$
F(x,y):=\max_{i\in[d]}\{a_{i,1}(y)x^2+a_{i,2}(y)x-a_{i,3}(y)\}\equiv \max_{i\in[d]}F_{i}(x,y).
$$
This gives us an implicit relation for the step size policy \eqref{eq:tau2} by $F(\tau_{k+1},\tau_{k})=0$ from which we take the positive solution, if it exists. To analyse this implicit relation, we proceed as follows. 
\begin{lemma}\label{lem:F}
The equation $F(x,0)=0$ has the unique solution $x=0$. Furthermore, the equation $F(x,\alpha/\beta)=0$ has the unique positive solution $x=\frac{\alpha}{\beta}=\frac{1}{\kappa}$. 
\end{lemma}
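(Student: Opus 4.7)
\textbf{Proof Plan for Lemma \ref{lem:F}.}

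The plan is to evaluate the coefficients $a_{i,1}, a_{i,2}, a_{i,3}$ explicitly at the two values of $y$ and then analyse each quadratic $F_i(\cdot, y)$ as a one-variable polynomial. For the first statement, I would simply substitute $y = 0$: the definitions give $a_{i,1}(0) = 1$, $a_{i,2}(0) = 0$ and $a_{i,3}(0) = 0$ for every $i \in [d]$, so $F_i(x, 0) = x^2$ uniformly in $i$, and hence $F(x, 0) = x^2$. The only root is $x = 0$.

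For the second statement, I first compute
\[
a_{i,1}(1/\kappa) = \frac{1-\pi_i}{\kappa\pi_i},\qquad
a_{i,2}(1/\kappa) = \frac{\kappa + 1 - 1/\pi_i}{\kappa^2},\qquad
a_{i,3}(1/\kappa) = \frac{1}{\kappa^2}.
\]
Then I verify by direct substitution that $F_i(1/\kappa,\, 1/\kappa) = 0$ for each $i$: plugging $x = 1/\kappa$ into $a_{i,1}(1/\kappa) x^2 + a_{i,2}(1/\kappa) x - a_{i,3}(1/\kappa)$ yields $\kappa^{-3}[(1-\pi_i)/\pi_i + \kappa + 1 - 1/\pi_i] - \kappa^{-2}$, in which the bracketed term collapses to $\kappa$, giving $0$. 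Taking the pointwise maximum over $i$ then shows $F(1/\kappa, 1/\kappa) = 0$.

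Next I identify the other root of each quadratic and show it is negative. Since $\pi_i \in (0,1)$, the leading coefficient $a_{i,1}(1/\kappa) = (1-\pi_i)/(\kappa\pi_i)$ is strictly positive, so $F_i(\cdot, 1/\kappa)$ is an upward-opening parabola. By Vieta, the product of its roots equals $-a_{i,3}(1/\kappa)/a_{i,1}(1/\kappa) = -\pi_i/[\kappa(1-\pi_i)] < 0$; with one root already known to be $1/\kappa > 0$, the other must be the negative number $-\pi_i/(1 - \pi_i)$. Consequently $F_i(x, 1/\kappa) < 0$ for $x \in (0, 1/\kappa)$ and $F_i(x, 1/\kappa) > 0$ for $x > 1/\kappa$, uniformly in $i$.

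Finally, taking the maximum over $i$ preserves these sign patterns: $F(x, 1/\kappa) < 0$ on $(0, 1/\kappa)$ and $F(x, 1/\kappa) > 0$ on $(1/\kappa, \infty)$. Combined with $F(1/\kappa, 1/\kappa) = 0$, this establishes $x = 1/\kappa$ as the unique positive solution. The only real difficulty lies in the bookkeeping of Step 3 — the cancellation that yields $F_i(1/\kappa, 1/\kappa) = 0$ must be carried out carefully — but the structural argument via Vieta's formulas then makes uniqueness immediate, avoiding any need to invoke the full quadratic formula.
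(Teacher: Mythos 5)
Your proposal is correct and is simply the worked-out version of what the paper dismisses as ``a direct numerical computation'': the evaluation of the coefficients at $y=0$ and $y=1/\kappa$, the cancellation giving $F_i(1/\kappa,1/\kappa)=0$, and the Vieta argument (using $\pi_i\in(0,1)$ so that $a_{i,1}(1/\kappa)>0$ and the second root is negative) are all accurate, and the sign pattern survives the pointwise maximum over $i$. No gaps; this matches the paper's (unstated) argument.
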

\begin{proof}
This is a direct numerical computation.
\end{proof}
By definition we have $\kappa=\max_{i\in[d]}\frac{L_{i}+\mu_{i}}{\mu_{i}\pi_{i}}\geq\frac{1}{\pi_{i}}>1$ for all $i\in[d]$ as well. Consequently, 
$$
\delta_{i}:=\kappa-\frac{1}{\pi_{i}}\geq 0\qquad\forall i\in[d],
$$
and we obtain the concise representation $a_{i,1}(y)=1-\kappa y^{2}-y\delta_{i}$, as well as, $a_{i,2}(y)=y^{2}(\delta+1)$. It is clear that $a_{i,1}(y)>0$ for $0<y<\frac{-\delta_{i}+\sqrt{\delta_{i}^{2}+4\kappa}}{2\kappa}$. Since $\pi_{i}\in(0,1)$, it can be readily verified that restricting $y\in(0,1/\kappa)$ is a sufficient condition for $a_{i,1}(y)>0$. Hence, if $\tau_{k}\in(0,1/\kappa)$, then the characterisation of the step size reduces to finding the root of the polynomial equation 
$$
\tau^{2}_{k+1}+\frac{a_{2}(\tau_{k})}{a_{1}(\tau_{k})}\tau_{k+1}-\frac{a_{3}(\tau_{k})}{a_{1}(\tau_{k})}=0.
$$
Since $\frac{a_{2}(y)}{a_{1}(y)}=y^{2}\frac{\delta+1}{1-\kappa y^{2}-y\delta}$, and $\frac{a_{3}(y)}{a_{1}(y)}=y^{2}\frac{1}{1-\kappa y^{2}-\delta y}$, we set 
$$
b_{k}:=\frac{\delta+1}{1-\tau_{k}^{2}\kappa-\delta \tau_{k}}\text{ provided that }\tau_{k}\in(0,1/\kappa),
$$
so that our polynomial becomes 
\begin{equation}
\tau^{2}_{k+1}+\tau^{2}_{k}b_{k}\tau_{k+1}-\tau^{2}_{k}\frac{b_{k}}{\delta+1}=0.
\end{equation}
This has the unique positive solution 
\begin{equation}
\tau_{k+1}=\frac{\tau_{k}\left[-b_{k}\tau_{k}+\sqrt{b_{k}^{2}\tau^{2}_{k}+4b_{k}/(\delta+1)}\right]}{2}=\frac{2}{\delta+1}\frac{1}{1+\sqrt{1+\frac{4}{b_{k}\tau^{2}_{k}(1+\delta)}}}.
\end{equation}
Setting $d_{k}:=1-\delta_{k}\tau_{k}-\kappa\tau^{2}_{k}$, and $s_{k}:=(1+\delta)\tau_{k}$, this recursion can be equivalently stated as 
$$
s_{k+1}=\frac{2s_{k}}{s_{k}+\sqrt{s^{2}_{k}+4d_{k}}}.
$$
To wit, we note that 
\begin{align*}
s_{k+1}=\frac{2}{1+\sqrt{1+\frac{4d_{k}}{\tau^{2}_{k}(1+\delta)^{2}}}}=\frac{2}{1+\sqrt{1+\frac{4d_{k}}{s^{2}_{k}}}}=\frac{2s_{k}}{s_{k}+\sqrt{s^{2}_{k}+4d_{k}}}.
\end{align*}
Hence, $\frac{s_{k+1}}{s_{k}}=\frac{2}{1+\sqrt{1+\frac{4d_{k}}{s^{2}_{k}}}}\leq 1$ using that $d_{k}>0$. It follows by induction that $s_{k}\leq s_{0}=(1+\delta)\tau_{0}$. We deduce that the sequence $(\tau_{k})_{k}$ is monotonically decreasing and non-negative. Therefore, $\lim_{k\to\infty}\tau_{k}=\tau_{\infty}\geq 0$ exists. We argue that $\tau_{\infty}=0$. Suppose that $\tau_{\infty}\geq t>0$. Then $s_{\infty}=\lim_{k\to\infty}s_{k}=(1+\delta)\tau_{\infty}$, and $\lim_{k\to\infty}(s_{k+1}-s_{k})=0$. But then 
$$
s_{\infty}=\frac{2s_{\infty}}{s_{\infty}+\sqrt{s^{2}_{\infty}+4d_{\infty}}}<s_{\infty},
$$
a contradiction! We conclude $\tau_{\infty}=0$. If follows that $\lim_{k\to\infty}b_{k}=1+\delta$, and $\lim_{k\to\infty}d_{k}=1$. Moreover, $d_{k}\uparrow 1$. Hence, 
$$
s_{k+1}=\frac{2s_{k}}{s_{k}+\sqrt{s^{2}_{k}+4d_{k}}}\geq \frac{2s_{k}}{s_{k}+\sqrt{s^{2}_{k}+4}}=\frac{2}{1+\sqrt{1+4/s_{k}^{2}}},
$$
which is equivalent to $\frac{1-s_{k+1}}{s^{2}_{k+1}}\leq \frac{1}{s^{2}_{k}}$. It follows from \cite{tseng08apgm} that $s_{k}\geq \frac{2s_{0}}{ks_{0}+2}$ for all $k\geq 0$, which implies 
$$
\tau_{k}\geq \frac{2\tau_{0}}{(1+\delta)\tau_{0}k+2}, \text{ provided }\tau_{0}\in(0,1/\kappa).
$$
This implies via our parameter coupling $\sigma_{k}=\alpha/\tau_{k}-\beta$ for $k\geq 1$, that 
\begin{align*}
S_{k}&=1+\sum_{i=1}^{k}\frac{\alpha}{\tau_{i}}-k\beta\leq 1+ \frac{\alpha(1+\delta)}{2}\sum_{i=1}^{k}i+k\left(\frac{\alpha}{\tau_{0}}-\beta\right)\\
&=1+\frac{\alpha(1+\delta)}{4}k(k+1)+k\sigma_{0}.
\end{align*}
Hence, $S_{k}=O(k^{2})$.

\end{appendices}


\end{document}